\newtheorem{theorem}{Theorem}[section]
\newtheorem{proposition}[theorem]{Proposition}
\newtheorem{corollary}[theorem]{Corollary}
\newtheorem{remark}[theorem]{\it Remark\/}
\newtheorem{definition}[theorem]{Definition}
\newtheorem{lemma}[theorem]{Lemma}
\newtheorem{assumption}[theorem]{Assumption}
\numberwithin{equation}{section}
\numberwithin{figure}{section}
\DeclareMathOperator{\loc}{loc}
\DeclareMathOperator{\sgn}{sgn}
\newcommand \RR		  {\mathbb{R}}
\newcommand \del		\partial
\newcommand \eps		\epsilon
\newcommand \calC		{\mathcal{C}}
\newcommand \calU		{\mathcal{U}}
\newcommand \calV		{\mathcal{V}}
\newcommand \calL		{\mathcal{L}}
\newcommand \indic	{\mathds{1}}
\newcommand \argmin	{\mathrm{argmin}}
\newcommand \supp		{\mathrm{supp}}
\begin{document} 
\title[Coupling techniques for nonlinear hyperbolic equations]{Coupling techniques for nonlinear hyperbolic equations. II. Resonant interfaces with internal structure}

\author[Benjamin Boutin, Fr\'ed\'eric Coquel, and Philippe G. L{\tiny e}Floch]
{Benjamin Boutin$^1$, Fr\'ed\'eric Coquel$^2$, and Philippe G. L{\smaller e}Floch$^3$}

\address{Univ. Rennes, CNRS, IRMAR - UMR 6625, F-35000 Rennes, France}
\email{Benjamin.Boutin@univ-rennes1.fr}

\address{Centre de Math\'ematique Appliqu\'ees, Ecole Polytechnique, Route de Saclay, 91128 Palaiseau Cedex, France}
\email{coquel@cmap.polytechnique.fr}

\address{Laboratoire Jacques-Louis Lions \& Centre National de la Recherche Scientifique, Sorbonne Universit\'e, 75258 Paris, France}
\email{contact@philippelefloch.org}

\date{}

\thanks{The authors were partially supported by the Innovative Training Network (ITN) grant 642768 (ModCompShock).
\\
\noindent\textit{AMS classification.} 35L65, 35D40.
\noindent\textit{Keywords.} Hyperbolic conservation law; coupling technique; Riemann problem; self-similar approximation; resonant effect. \hfill To appear in: \it Networks and Heterogeneous Media.
\\
\phantom{} \hfill Final version: {January 2021}}

\begin{abstract} 
In the first part of this series, an augmented PDE system was
introduced in order to couple two nonlinear hyperbolic equations together. This
formulation allowed the authors, based on Dafermos's self-similar viscosity method,
to establish the existence of self-similar solutions to the coupled Riemann
problem. We continue here this analysis in the {restricted} case of
one-dimensional scalar equations and investigate the internal structure of the
interface in order to derive a selection criterion associated with the underlying regularization
mechanism and, in turn, to characterize the nonconservative interface layer. In addition, we identify a new criterion 
that selects double-waved solutions that are also continuous at the interface. We conclude by providing some
evidence that such solutions can be non-unique when dealing with non-convex flux-functions.
\end{abstract}
  
\maketitle


\section{Introduction}


\subsection{Motivation and notations}

In the series of papers \cite{Boutin:2011la, BCL-3, BCL-4}, the authors developed analytical and numerical techniques for the coupling of nonlinear hyperbolic systems across a fixed spatial interface. We continue our investigations and study 
Dafermos's self-similar, viscosity approximation for an augmented system. We consider here the coupling between
 two conservation laws on one spatial dimension, that is, 
\begin{align}
\label{eq:left}
  \del_t u + \del_x f_-(u) &= 0,\qquad x<0,\ t>0,\\
\label{eq:right}
  \del_t u + \del_x f_+(u) &= 0,\qquad x>0,\ t>0.
\end{align}
A coupling condition at the interface $\{x=0\}$ should involve the (formal) traces of the unknown function $u = u(x,t)$ from
 both sides, and we focus here on a \emph{nonconservative coupling condition}, obtained by imposing the continuity condition 
  (whenever it is compatible with the dynamics) 
\begin{equation}
\label{eq:cplcond}
u(t,0^-) = u(t,0^+),\quad t>0.
\end{equation}
Roughly speaking, this ensures that any constant function is an equilibrium solution, independent of time. In other words, 
there is no evolution when the traces of $u$ coincide despite the flux terms from both sides may differ, that is,
in general $f_-(u(t,0^-)) \neq f_+(u(t,0^+))$. Importantly, the usual Rankine-Hugoniot condition, in general, is not satisfied across the interface.
{This feature is present in some systems of conservation laws, typically when handling point constraints or junctions. It leads to well-posedness issues that typically require the introduction of adapted Riemann solvers, see e.g. \cite{Benyahia:2018,Corli:2018,Garavello:2011,Garavello:2006,Herty:2007,Rosini:2020}}

The continuity of the flux at the interface is a more common interfacial coupling condition. 
We will not treat this case here but for the sake of comparison, let us briefly discuss it first. 
The continuity of flux terms across the interface in fact corresponds to the usual Rankine-Hugoniot relation and is sufficient 
in order to characterize conservative, standing wave, interfacial discontinuities. 
The two half-problems may be reformulated as a single global conservation law, with an inhomogeneous flux function $f=f(x,u)$, that is discontinuous at $x=0$. A large literature is devoted to these \emph{conservative coupling conditions}, as we call them. 
Various approaches have been introduced in order to tackle the corresponding Cauchy problem
(after Kru{\v z}kov~\cite{Kruzkov:1970kq}) and the well-posedness is obtained 
by relying on suitable entropy-like inequalities at the interface. 
We refer for example to the $\Gamma$-condition and the viscous profile condition introduced by Diehl~\cite{Diehl:1996ly,Diehl:1996gf}, to the results of Klingenberg and Risebro~\cite{Klingenberg:1995xy} in the case of multiplicative fluxes $g(x)f(u)$, Seguin and Vovelle~\cite{Seguin:2003bh}, Audusse and Perthame~\cite{Audusse:2005fy}, and a review on the subject by Burger and Karlsen~\cite{Burger:2008oq}. 
Adimurthi, Mishra, and Gowda~\cite{Adimurthi:2005cr,Adimurthi:2007wd,Adimurthi:2007eu} developed the notion of generalized entropy solution for such discontinuous flux functions. Later on, a unified framework was provided by the $L^1$-dissipative admissibility germ developed by Andreianov, Karlsen and Risebro~\cite{Andreianov:2010ai, Andreianov:2011tg} on the basis of $L^1$-contractive semigroups involving boundary conditions (see also more recent works~\cite{Andreianov:2012dp, Andreianov:2014hc, Andreianov:2015sp, Andreianov:2015th, Andreianov:2015ij,Andreianov:2012wo} and the references therein).

Motivated by the theory of admissible boundary conditions for hyperbolic systems of conservation laws, as derived by Bardos, Leroux and Nedelec~\cite{Bardos:1979gf} and, more generally, defined in Dubois and LeFloch~\cite{Dubois:1988ly}, 
the nonconservative coupling problem under consideration in the present paper
was studied at, both, the theoretical and at the numerical levels. 
Godlewski and Raviart~\cite{Godlewski:2004qf} analyzed the scalar problem first and then treated systems from plasma fluid dynamics, in collaboration with Le Thanh~\cite{Godlewski:2005jk}. These authors proposed adapted numerical strategies based on generic numerical fluxes chosen in agreement with the theoretical results, whenever a unique solution is available. 
When a suitable sign of a characteristic field changes across the coupling interface, uniqueness may fail and different numerical schemes are found to capture different solutions; see also~\cite{Ambroso:2014ij, Boutin:2010fv,Chalons:2010fu, galie:tel}. This non-uniqueness and numerical selection phenomenon is a well-known feature of general nonconservative problems, as recognized by \cite{HouPLF}
and investigated in \cite{Berthon:2012,Castro:2008}.
The theory of nonconservative paths proposed by LeFloch \cite{LeFloch-IMA,LeFloch-graph} and Dal~Maso, LeFloch and Murat \cite{Dal-Maso:1995cs} provides at once an explanation and a cure to such a difficulty. Namely, the presence of a nonconservative product requires some new additional characterization to be well-defined in the sense of measures. The 
notion of {DLM product\footnote{{DLM stands for the initials of the authors of \cite{Dal-Maso:1995cs}.}}} provides one with a framework to define and handle entropy weak solutions. It is expected that a given numerical scheme, due to its high-order (diffusive-dispersive) features, generates a family of paths and, therefore, selects one solution among all possible {ones}; for further details see \cite{LeFloch-IMA} as well as \cite{Berthon:2012,Castro:2008}. {
Another intermediate coupling strategy consists in introducing a weighted Dirac mass source, and enables the coupling conditions to take into account the mass, momentum, or energy loss at the discontinuity \cite{Coquel:2016, galie:tel}.}

\medskip

Following {the previous paper in this series} \cite{Boutin:2011la}, the coupling problem~\eqref{eq:left}--\eqref{eq:right}--\eqref{eq:cplcond} is now 
studied by introducing a reformulation based on the nonconservative extended PDE system
\begin{equation}
  \label{eq:extended-eq}
    \begin{aligned}
      \partial_tu+\dfrac12\left((1-v)f_-'(u)+(1+v)f_+'(u)\right)\partial_x u&=0,\\
      \partial_tv&=0, 
    \end{aligned}
\end{equation}
together with the Riemann initial conditions
\begin{equation}
\label{eq:Riemanndata}
    (u,v)(x,0)=\begin{cases}
        (u_L,-1),& x<0,\\
        (u_R,+1),& x>0.
      \end{cases}
\end{equation}
{For technical reasons, we do not attempt to treat systems of equations and consider hereafter either nonlinear conservation laws or special coupled systems that essentially reduce to scalar problems~\eqref{eq:extended-eq}).}
As mentioned above, the nonconservative coupling condition~\eqref{eq:cplcond} may be handled by gluing together 
two half-problems and formulating a {\sl set of admissible traces} in the sense of~\cite{Dubois:1988ly}, {as used} in~\cite{Boutin:2010fv}.
The ``artificial color function'' $v$ admits a standing discontinuity at $x=0$ so that, at least formally, we 
recover the hyperbolic conservation laws~\eqref{eq:left}--\eqref{eq:right} in each half-domain. However,
the system~\eqref{eq:extended-eq} exhibits a loss of strict hyperbolicity when the wave velocities have opposite 
signs across the interface. In that case, for some intermediate value $v\in[-1,1]$, the characteristic velocity 
$\tfrac12\left((1-v)f_-'(u)+(1+v)f_+'(u)\right)$ vanishes and is equal to the velocity of the stationary (and thus 
linearly degenerate) wave. 
The reader is referred to the works~\cite{Chalons:2008mi,Goatin:2004la,Isaacson:1992ff} for similar discussions. 
As a matter of fact, we propose here to tackle the problem directly in a global-in-space form. We then consider a 
suitable self-similar viscous approximation 
in order to be in a position to study the layer near the interface and eventually uncover
the underlying selection criterion. In turn, this criterion corresponds to an (implicit) definition of a DLM path 
adapted to the problem.

Dafermos's approximations to \eqref{eq:extended-eq} are self-similar solutions
(that is, solutions depending only on $\xi=x/t$) 
of a vanishing viscous formulation involving a time-dependent, suitably scaled, parabolic regularization. Such
a method was proposed independently by Kala{\v s}nikov \cite{Kalasnikov:1959ud}, Tup{\v c}iev \cite{Tupciev:1966kk},
and Dafermos \cite{Dafermos:1973fv}. More recent works by Christoforou and Spinolo \cite{Christoforou:2012kn} aim 
at reducing the gap between such a self-similar regularization (on the ODE viscous profile and/or Riemann hand 
side) and the more physical parabolic regularization (on the PDE and/or Cauchy problem hand side). This kind of 
reformulation has also recently been used by Berthon~et al. in~\cite{Berthon:2019} to study diffusive-dispersive 
solutions and identify numerically nonclassical undercompressive weak solutions, another transition problem 
involving kinetic relations.

After all, the system under study, consisting in the self-similar Dafermos approximation
of~\eqref{eq:extended-eq}, reads as follows:
\begin{equation}
\label{eq:Dafermos}
\begin{aligned}
-\xi \dfrac{du^\eps}{d\xi} + \dfrac12\left((1-v^\eps)f_-'(u^\eps)+(1+v^\eps)
 f_+'(u^\eps)\right)\dfrac{du^\eps}{d\xi} &= \eps \dfrac{d^2u^\eps}{d\xi^2},\\
 -\xi \dfrac{dv^\eps}{d\xi} &= \eps^2 \dfrac{d^2v^\eps}{d\xi^2}.
\end{aligned}
\end{equation}

Throughout this paper, we impose the following regularity. 

\begin{assumption}
The flux functions $f_-$ and $f_+$ are twice continuously differentiable over $\RR$ and admit a finite number of inflection points.
\end{assumption}

Consider Riemann data $(u_L,u_R)$ as in~\eqref{eq:Riemanndata}, and let $M>0$ be the following upper bound for the 
characteristic speeds involved in the problem (with $u\in[u_L,u_R]$, independently of $v\in[-1,1]$):
\begin{equation}
\label{eq:hyp-bound}
M = \sup_{u\in[u_L,u_R]}\left(|f_-'(u)|+|f_+'(u)|\right).
\end{equation}
The system of ODEs~\eqref{eq:Dafermos} is then supplemented with natural boundary conditions at $\xi=\pm M$, and
 reproduce the initial Riemann data~\eqref{eq:Riemanndata} at the level of self-similar solutions:
\begin{equation}
\label{eq:Boundcond}
\begin{aligned}
u^\eps(-M)=u_L, & \qquad v^\eps(-M)=-1,\\ 
u^\eps(+M)=u_R, & \qquad  v^\eps(+M)=+1.
\end{aligned}
\end{equation}
We are interested in the qualitative properties of the solutions $(u^\eps,v^\eps)$ to~\eqref{eq:Dafermos}-\eqref{eq:Boundcond}, as the viscosity parameter $\eps$ goes to zero.


\subsection{Main results and structure of this paper}

Let us first recall some results of our previous paper \cite{Boutin:2011la} about the existence of a solution $u^\eps$ to \eqref{eq:Dafermos}-\eqref{eq:Boundcond} and the convergence of this solution as $\epsilon \to 0$. These results are summarized and reduced to the current framework in Proposition~\ref{prop:MR2} below.
{Actually, our previous study covered a large class of systems of conservation laws but, as already mentioned, we restrict ourselves here to} scalar conservation laws \eqref{eq:extended-eq}. This one-dimensional scalar framework under consideration makes it possible to further analyze the structure of the solutions, thanks to monotonicity arguments and representation formulas. The aim of this paper is indeed to {‘\sl characterize the limiting objects.} 

\begin{definition}[CRD solution]
Given $(u_L,u_R)\in\RR^2$, a function $u\in L^\infty(\RR)\cap BV(\RR)$ is said to be a \emph{\bf coupled Riemann-Dafermos solution} 
(or CRD solution, in short)
{to \eqref{eq:left}--\eqref{eq:right}, }
associated with the Riemann data $(u_L,u_R)$,
if it is the limit of a convergent subsequence $\{u^\eps\}_{\eps>0}$ of solutions to the boundary value problem~\eqref{eq:Dafermos}--\eqref{eq:Boundcond}.
\end{definition}

We rely on the following implicit representation formula for the solution $u^\eps$  to \eqref{eq:Dafermos}--\eqref{eq:Boundcond}:
\begin{equation}
\label{eq:repres}
 u^\eps(\xi) = u_L + (u_R-u_L)\dfrac{\displaystyle\int_{-M}^\xi e^{-h^\eps(s;u^\eps)/\eps} ds}{\displaystyle\int_{-M}^M e^{-h^\eps(s;u^\eps)/\eps} ds}, \quad -M\leq \xi \leq M,
\end{equation}
where  
\begin{equation}
\label{eq:heps}
h^\eps (\xi;u^\eps) = \frac12\int_0^\xi \Big(2s-f_-'(u^\eps(s))(1-v^\eps(s))-f_+'(u^\eps(s))(1-v^\eps(s))\Big)ds. 
\end{equation}
The following results are available. 

\begin{proposition}[{\cite[Theorem 3.5]{Boutin:2011la}}]
\label{prop:MR2}
Let $(u_L,u_R)$ and $M$ be given so that~\eqref{eq:hyp-bound} holds. 
Any CRD solution $u$ is monotone and bounded in $(-M,M)$. Moreover, $u$ is solution of the following conservation laws endowed with infinitely many entropy inequalities (any of the following equations being satisfied in the sense of distributions over the respective half-space $(0,M)$ or $(-M,0)$): 
\begin{equation}
\label{eq:ESleft}
-\xi \dfrac{d}{d\xi} u + \dfrac{d}{d\xi} f_-(u) = 0,\qquad
-\xi \dfrac{d}{d\xi} \eta(u) + \dfrac{d}{d\xi} q_-(u) \leq 0, \qquad \xi<0,
\end{equation}
\begin{equation}
\label{eq:ESright}
-\xi \dfrac{d}{d\xi} u + \dfrac{d}{d\xi} f_+(u) = 0,\qquad 
-\xi \dfrac{d}{d\xi} \eta(u) + \dfrac{d}{d\xi} q_+(u) \leq 0, \qquad \xi>0,
\end{equation}
together with the boundary conditions
\begin{equation}
\label{eq:BC}
u(-M) = u_L,\qquad u(+M) = u_R.
\end{equation}
{The above convex entropy-entropy flux pairs $(\eta,q_-)$ and $(\eta,q_+)$ are associated with the flux $f_-$ and $f_+$ under consideration, respectively, in the sense that $q_\pm ' = \eta' f_\pm '$.}
If $u_-$ and $u_+$ denote the traces of $u$ at $\xi=0-$ and $\xi=0+$, respectively, one of the following cases must hold: 
\begin{equation}
\label{eq:monotonicity}
\begin{aligned}
\textrm{either }&u_L\leq u_- \leq u_+ \leq u_R,\\
\textrm{or }&u_L\geq u_- \geq u_+ \geq u_R.
\end{aligned}
\end{equation}
\end{proposition}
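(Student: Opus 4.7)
The plan is to establish the four assertions---boundedness and monotonicity, the limiting conservation laws on each half-space, the entropy inequalities, and the trace ordering at $\xi=0$---by exploiting the representation formula~\eqref{eq:repres}--\eqref{eq:heps} together with standard viscous-limit arguments on the Dafermos system~\eqref{eq:Dafermos}.

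First, differentiating~\eqref{eq:repres} yields
\[
\frac{du^\eps}{d\xi}(\xi) = (u_R - u_L)\,\frac{e^{-h^\eps(\xi;u^\eps)/\eps}}{\int_{-M}^{M} e^{-h^\eps(s;u^\eps)/\eps}\,ds},
\]
so the derivative keeps a constant sign on $[-M,M]$, which immediately gives monotonicity of each $u^\eps$ and, since $u^\eps(\pm M)=u_{L/R}$ by construction, the uniform bound $u^\eps(\xi)\in[\min(u_L,u_R),\max(u_L,u_R)]$. These uniform $L^\infty\cap BV$ estimates survive the limit $\eps\to 0$; by Helly's selection theorem any convergent subsequence converges pointwise to a monotone $L^\infty\cap BV$ limit $u$, from which the dichotomy~\eqref{eq:monotonicity} for the traces $u_L,u_-,u_+,u_R$ follows at once.

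Next, I would analyze the auxiliary function $v^\eps$. The second equation in~\eqref{eq:Dafermos} is a linear ODE with an $\eps^2$-regularization whose solutions satisfy $v^\eps(\pm M)=\pm 1$ and, by an explicit exponential-barrier argument, converge uniformly on compact subsets of $(-M,0)$ and $(0,M)$ to $-1$ and $+1$, respectively. Substituting this convergence into the first equation of~\eqref{eq:Dafermos} and observing that $\eps\,d^2u^\eps/d\xi^2\to 0$ in the sense of distributions on any compact subset of each half-line (a consequence of the $BV$ bound through the standard viscosity-limit argument), one recovers the conservation laws~\eqref{eq:ESleft}--\eqref{eq:ESright}. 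The entropy inequalities follow by multiplying the first equation of~\eqref{eq:Dafermos} by $\eta'(u^\eps)$ for a convex entropy $\eta$: the viscous term rewrites as $\eps\,(\eta(u^\eps))_{\xi\xi} - \eps\,\eta''(u^\eps)\,(u^\eps_\xi)^2$, whose second contribution is non-positive and whose first contribution vanishes in the sense of distributions as $\eps\to 0$, yielding the required inequality on each half-line.

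Finally, recovering the boundary conditions $u(\pm M)=u_{L/R}$ is the most delicate step: the normalization in~\eqref{eq:repres} forces $u^\eps(\pm M)=u_{L/R}$ identically in $\eps$, but passing this property to the limit requires a barrier argument near $\xi=\pm M$ using that the drift $-\xi+\tfrac12\bigl((1-v^\eps)f_-'(u^\eps)+(1+v^\eps)f_+'(u^\eps)\bigr)$ has definite sign there thanks to~\eqref{eq:hyp-bound}. The main obstacle in the whole program is precisely this boundary-layer analysis, combined with the possible loss of strict hyperbolicity near $\xi=0$: one must rule out mass concentration at the interface and ensure that the one-sided traces $u_\pm$ are genuinely attained monotonically from each side, which justifies both the trace dichotomy and the validity of the half-space formulations~\eqref{eq:ESleft}--\eqref{eq:ESright}.
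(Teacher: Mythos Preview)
The paper does not actually prove this proposition: it is stated as a citation of \cite[Theorem~3.5]{Boutin:2011la}, the first paper in the series, and is explicitly introduced as ``results of our previous paper \ldots\ summarized and reduced to the current framework.'' So there is no proof in the present paper to compare against.

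That said, your outline is the standard route and is essentially what the cited reference does: the representation formula~\eqref{eq:repres} gives monotonicity and the uniform $L^\infty\cap BV$ bounds; Helly extracts a monotone limit; the explicit Gaussian-type solution for $v^\eps$ converges to $\pm 1$ away from the origin; and the usual multiplication by $\eta'(u^\eps)$ plus the sign of $\eps\,\eta''(u^\eps)(u^\eps_\xi)^2$ yields the entropy inequalities in the limit on each half-line. Your identification of the boundary-condition step at $\xi=\pm M$ as the delicate one is accurate: the drift term indeed has a definite sign near $\pm M$ thanks to the choice of $M$ in~\eqref{eq:hyp-bound}, which is exactly what prevents a boundary layer there. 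One small remark: you mention ``ruling out mass concentration at the interface'' as an obstacle for the trace dichotomy, but in fact the monotonicity of each $u^\eps$ (and hence of $u$) already gives~\eqref{eq:monotonicity} directly, since a monotone BV function has well-defined one-sided limits everywhere and these are automatically ordered; no separate non-concentration argument is needed for that part.
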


In other words, any CRD solution $u$ coincides with some entropy weak solution for the left flux $f_-$ in the half-line $\{\xi<0\}$ connecting $u_L$ to $u_-$ and with an entropy weak solution for the right flux $f_+$ in $\{\xi>0\}$ connecting $u_+$ to $u_R$. As usual, the compound self-similar solution in the half line $\{\xi<0\}$ is recovered when considering respectively the lower convex envelope of the flux function $f_-$ in $[u_L,u_-]$ when $u_L<u_-$ (respectively the upper concave envelope of $f_-$ in $[u_-,u_L]$ otherwise). The compound solution in the right half line $\{\xi>0\}$ is obtained when considering exactly the same kind of envelope but for the flux $f_+$ in $[u_+,u_R]$.

It is important to observe, at this level, that there is no reason why such a CRD solution would be unique (the Riemann data being given). Both equations \eqref{eq:ESleft} and \eqref{eq:ESright} concern only one half-space and, except the simple monotonicity property \eqref{eq:monotonicity}, no precise information has been yet obtained at the interface $\xi=0$ to connect them together. At this stage however, the monotonicity characterization is sufficient to state that for the trivial Riemann data $(u_L,u_R)$ with $u_L=u_R$, the unique CRD solution is the constant one.

This paper is organized as follows. Section~\ref{sec:outer} is devoted to the analysis of the outer solution. By studying the  asymptotic behavior of $u^{\eps}$ far from the coupling interface, we get necessary conditions on a function to be a CRD solution, supplementing in this way Proposition~\ref{prop:MR2} above. 
The new selection criterion appears in Proposition~\ref{prop:MR3} and is specified in Corollary~\ref{cor:convex} for the case of convex fluxes. In Section~\ref{sec:layer}, we characterize the asymptotic interfacial layer in the neighborhood of $\xi=0$ in terms of a possible viscous profile equation. This is the object of Theorem~\ref{thm:RelaxLayer}. In Section~\ref{sec:matching} the analysis is concerned with the matching conditions at the interface. This is the object of Theorem~\ref{thm:matchCond}.
{Our results provide necessary conditions inherited from the viscous self-similar approximation. However, they might still be insufficient in order to guarantee that the constructed solution is an actual limit of viscous solutions $u^\eps$.} Section~\ref{sec:GNL} is devoted to some more explicit results in the case of convex fluxes and then to the convex quadratic case. Finally, some numerical prospecting around our new selection criterion are given in Section~\ref{sec:numerics}.


\section{Analysis of the outer solution}
\label{sec:outer}

From Proposition~\ref{prop:MR2}, any CRD solution exhibits a global monotonicity property over the whole real line, and {coincides} with an entropy weak solution over each half-space. The following proposition concerns another global interesting feature of the CRD solutions. It extends a natural structure put forward by Tzavaras~\cite{Tzavaras:1996kl} for systems and, {while} analyzing characteristic boundary layers, by Joseph and LeFloch~\cite{Joseph:1999dq,Joseph:2002dq,Joseph:2006dq}.

\begin{proposition}
\label{prop:MR3}
Let $u$ be a CRD solution to \eqref{eq:left}--\eqref{eq:right}. Let us define for any $\xi\in[-M,M]$ the quantity
\begin{equation}
\label{eq:h}
h(\xi;u)=\int_0^\xi \left(s- f'_-(u(s)) \indic_{s<0} - f'_+(u(s)) \indic_{s>0}\right)\,ds.
\end{equation}
Then, the support of the measure $u'$ coincides with the global minimizing set of the function $h(\cdot;u)$ over the interval $[-M,M]$:
\begin{equation}
\label{eq:MR3}
\supp\ u'=\argmin_{\xi} h(\xi;u).
\end{equation}
\end{proposition}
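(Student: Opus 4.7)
\smallskip

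\textbf{Proof plan.} The strategy is to exploit the explicit representation formula~\eqref{eq:repres} and apply Laplace's principle. I proceed in four steps.

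First, I analyze the color function $v^\eps$. The second equation in~\eqref{eq:Dafermos} is linear, and can be integrated explicitly: $\tfrac{dv^\eps}{d\xi}$ is a scaled Gaussian of variance $\eps^2$, and the boundary data $v^\eps(\pm M)=\pm 1$ yield $v^\eps(\xi)\to \sgn(\xi)$ pointwise for $\xi\neq 0$ (and uniformly on compact subsets of $[-M,M]\setminus\{0\}$), while $\|v^\eps\|_\infty\le 1$. Combined with the pointwise a.e.\ convergence $u^\eps\to u$ inherited from the BV compactness of Proposition~\ref{prop:MR2}, dominated convergence applied to the integrand in~\eqref{eq:heps} gives the uniform convergence $h^\eps(\cdot;u^\eps)\to h(\cdot;u)$ on $[-M,M]$.

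Second, differentiating the representation~\eqref{eq:repres} yields
\begin{equation*}
\dfrac{du^\eps}{d\xi}(\xi)=(u_R-u_L)\,\dfrac{e^{-h^\eps(\xi;u^\eps)/\eps}}{Z_\eps},\qquad Z_\eps=\int_{-M}^M e^{-h^\eps(s;u^\eps)/\eps}\,ds,
\end{equation*}
so that $u^{\eps\prime}=(u_R-u_L)\mu_\eps$ for the probability measure $\mu_\eps$ with density $e^{-h^\eps/\eps}/Z_\eps$. Extracting a weakly-$*$ convergent subsequence $\mu_{\eps_n}\rightharpoonup \mu$ via Helly's theorem, the convergence $u^{\eps_n}\to u$ forces $u'=(u_R-u_L)\mu$, so that (assuming $u_R\neq u_L$) $\supp u'=\supp\mu$.

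Third, I identify $\supp\mu$ by Laplace's principle. For any open $V\subset[-M,M]$ whose closure is disjoint from $\argmin h(\cdot;u)$, continuity and compactness provide $\delta>0$ with $h\ge \min h +\delta$ on $\overline V$ while $h\le \min h+\delta/2$ on a set of positive Lebesgue measure. Comparing numerator and denominator with uniform convergence $h^\eps\to h$ yields $\mu_\eps(V)=O(e^{-\delta/(3\eps)})\to 0$, and hence $\supp\mu\subseteq \argmin h(\cdot;u)$. Conversely, for $\xi_0\in\argmin h(\cdot;u)$ and any neighborhood $U$ of $\xi_0$, I exhibit a positive lower bound $\liminf_\eps\mu_\eps(U)>0$ using the specific structure of $h$ inherited from $u$: on rarefaction intervals of $u$, the identity $\xi=f'_\pm(u(\xi))$ makes $h'\equiv 0$, so $h$ is locally \emph{constant} and every such interval lies in $\argmin h$ and carries positive Laplace mass; on constant pieces of $u$, any interior minimizer must satisfy $\xi_0=f'_\pm(u(\xi_0))$, giving a strict local minimum with $h''(\xi_0)=1>0$ and hence a standard Gaussian-type Laplace contribution; finally, shock-type minimizers are handled by the Rankine--Hugoniot matching forced by~\eqref{eq:ESleft}--\eqref{eq:ESright}.

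\smallskip

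\textbf{Main obstacle.} The delicate point is the reverse inclusion $\argmin h(\cdot;u)\subseteq\supp u'$. A priori, when $h$ attains its infimum at several locations with different local orders of vanishing (e.g.\ flat rarefaction plateaus coexisting with isolated quadratic minima at shock positions), the sharper minima could receive asymptotically negligible mass and fall out of $\supp\mu$. The crux is therefore to use the CRD structure of $u$ (monotonicity, half-line entropy solutions, compatibility of shock speeds) to rule out such imbalances and to control uniformly the Laplace prefactor at every minimizer, so that no point of $\argmin h$ is lost in the limit.
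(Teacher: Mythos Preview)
Your treatment of the inclusion $\supp u'\subseteq\argmin h(\cdot;u)$ is essentially the paper's argument: uniform convergence $h^\eps\to h$ together with the representation formula gives an exponentially small ratio $e^{-\delta/(2\eps)}$ on any neighborhood bounded away from the minimizing set, so $u$ is locally constant there. This part is fine.

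The genuine gap is in the reverse inclusion $\argmin h(\cdot;u)\subseteq\supp u'$. You attempt to prove it by showing that every minimizer of $h$ carries strictly positive Laplace mass in the limit, and you correctly identify the obstacle: if $h$ achieves its minimum simultaneously on a flat plateau (rarefaction fan) and at an isolated corner or quadratic point, the Laplace prefactors are of different orders in $\eps$ (order $1$ versus order $\eps$ or $\sqrt{\eps}$), and the isolated minimizers receive asymptotically zero mass. Your proposed cure---``use the CRD structure of $u$ \ldots\ to rule out such imbalances''---is not carried out and, in fact, cannot be made to work along the Laplace route without essentially proving the structural lemma below.

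The paper avoids this difficulty by abandoning Laplace asymptotics entirely for this direction. The key is a structural lemma (Lemma~\ref{lm:hcvg}): $h(\cdot;u)$ is \emph{convex} on each of $(-M,0)$ and $(0,M)$, and its derivative is given almost everywhere by the piecewise formula~\eqref{eq:hprime}. In particular, $h'=0$ on the whole wave-fan intervals $(\Lambda_-,\lambda_-)$ and $(\lambda_+,\Lambda_+)$ (this uses $f_\pm'(u(\xi))=\xi$ on rarefaction pieces \emph{and} the Rankine--Hugoniot relation across each interior shock to see that $h$ stays constant across $\mathcal{S}_\pm$). The Lax inequalities at the extremal speeds $\Lambda_\pm,\lambda_\pm$ then force $h'\le 0$ to the left of each wave fan and $h'\ge 0$ to its right on the corresponding half-line. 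Convexity therefore gives
\[
\argmin_{[-M,M]} h(\cdot;u)\ \subset\ [\Lambda_-,\lambda_-]\cup[\lambda_+,\Lambda_+]\ \subset\ \supp u',
\]
directly, with no mass comparison needed. Note in particular that there are no ``isolated quadratic minima at shock positions'': every shock lies inside a wave-fan interval on which $h$ is already flat (or is the single corner point $\Lambda_-=\lambda_-$, which is automatically in $\supp u'$). The quadratic minima you locate on constant pieces of $u$ are precisely the points that the convexity/Lax argument excludes from the global argmin, so the competing-prefactor scenario you worry about never arises.
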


This has to be understood as a feedback condition on $u$ to be effectively a possible CRD solution, that is, 
a limit of $\{u^\eps\}_{\eps>0}$ in the considered vanishing viscosity formulation. The equation~\eqref{eq:MR3} expresses thus a necessary condition involving both left and right wave fans of the CRD solution $u$ as a whole.
The next Corollary describes the typical configurations that may arise, according to the nature of waves in both half-problems.

\begin{corollary}
\label{cor:MR3}
Let $u$ be a CRD solution to \eqref{eq:left}--\eqref{eq:right} and consider then its left-hand and right-hand traces $u_-$ and $u_+$ nearby the coupling point $\xi=0$. The following facts are satisfied:
\begin{enumerate}
\renewcommand{\theenumi}{\roman{enumi}}
\renewcommand{\labelenumi}{(\theenumi)}
\item If $f'_+(u_+)<0$, then $\supp\,u'\cap  (0,M)=\emptyset$, i.e. $u(\xi)=u_R$, $\xi>0$.\\
Similarly, if $f'_-(u_-)>0$, then $\supp\,u'\cap  (-M,0)=\emptyset$, i.e. $u(\xi)=u_L$, $\xi<0$.
\smallskip
\item\label{cor:main3point2}
If $\supp\,u'\cap (-M,0)\neq\emptyset$ and $\supp\,u'\cap (0,M)\neq\emptyset$,
then $f_-'(u_-)\leq 0$ and $f_+'(u_+)\geq 0$. Moreover, one has
\begin{equation}
\label{eq:selection}
\min_{\xi\leq 0}h(\xi;u) = \min_{\xi\geq 0}h(\xi;u).
\end{equation}
\item Assume $u_-\neq u_+$.\\
If $\supp\,u'\cap (0,M)\neq \emptyset$ then $f_+'(u_+)=0$.\\
Similarly, if $\supp\,u'\cap (-M,0)\neq \emptyset$ then $f_-'(u_-)=0$.
\end{enumerate}
\end{corollary}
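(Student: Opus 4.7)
The strategy rests on the representation $\supp u'=\argmin h(\,\cdot\,;u)$ from Proposition~\ref{prop:MR3}, combined with two elementary observations on $h(\,\cdot\,;u)$: first, $h(0;u)=0$, so the minimum $m:=\min_{[-M,M]} h(\,\cdot\,;u)$ satisfies $m\leq 0$; second, since $u$ admits one-sided traces $u_\pm$ at the origin and $f_\pm'$ is continuous, the function $h(\,\cdot\,;u)$ is right/left differentiable at $\xi=0$ with $h'(0\pm;u)=-f_\pm'(u_\pm)$, yielding the expansion $h(\xi;u)=-f_+'(u_+)\,\xi+o(\xi)$ as $\xi\to 0^+$ (and symmetrically on the left). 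The whole proof consists in feeding this expansion into the argmin characterization.

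For (i), assume $f_+'(u_+)<0$ and, for contradiction, that $\supp u'\cap(0,M)\neq\emptyset$. Let $\xi^*:=\inf\bigl(\supp u'\cap(0,M)\bigr)\in[0,M]$. If $\xi^*>0$, closedness of $\supp u'$ yields $\xi^*\in\supp u'=\argmin h$, while on $(0,\xi^*)$ one has $u\equiv u_+$, hence $h'(\xi;u)=\xi-f_+'(u_+)>0$ there; this gives $h(\xi^*;u)>h(0;u)=0\geq m$, contradicting $h(\xi^*;u)=m$. If instead $\xi^*=0$, any sequence $\xi_n\in\supp u'\cap(0,M)$ with $\xi_n\to 0^+$ satisfies $h(\xi_n;u)=m\leq 0$, yet the expansion above forces $h(\xi_n;u)=-f_+'(u_+)\,\xi_n+o(\xi_n)>0$ for $n$ large, again a contradiction. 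Hence $\supp u'\cap(0,M)=\emptyset$, and the boundary condition $u(M)=u_R$ then gives $u\equiv u_R$ on $(0,M)$. The mirror statement is identical.

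For (ii), by (i), $f_+'(u_+)<0$ would empty $\supp u'\cap(0,M)$, contrary to the hypothesis; hence $f_+'(u_+)\geq 0$ and similarly $f_-'(u_-)\leq 0$. Since both half-lines $(-M,0)$ and $(0,M)$ then contain a global minimizer of $h(\,\cdot\,;u)$, we deduce $\min_{\xi\leq 0}h=m=\min_{\xi\geq 0}h$, which is exactly~\eqref{eq:selection}. For (iii), the hypothesis $u_-\neq u_+$ produces a nonzero Dirac mass of $u'$ at the origin, so $0\in\supp u'=\argmin h$ and $m=h(0;u)=0$. If $\supp u'\cap(0,M)\neq\emptyset$, (i) already gives $f_+'(u_+)\geq 0$; the remaining strict case $f_+'(u_+)>0$ would yield $h(\xi;u)=-f_+'(u_+)\,\xi+o(\xi)<0$ for small $\xi>0$, contradicting $m=0$. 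Hence $f_+'(u_+)=0$, and the left-hand assertion follows by the same argument.

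The delicate point is the accumulation case $\xi^*=0$ in the proof of (i), where no buffer interval separates the interface from $\supp u'$: a purely monotonicity-based argument on a gap no longer applies, and one must instead combine the closedness of $\argmin h$, the trace continuity $u(\xi)\to u_+$ as $\xi\to 0^+$, and the one-sided Taylor expansion of $h$ in order to propagate the sign of $-f_+'(u_+)$ onto a sequence of putative minimizers converging to the interface.
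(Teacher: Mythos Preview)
Your proof is correct, but it follows a different path from the paper's. The paper argues all three items through the convexity of $h(\,\cdot\,;u)$ on each half-line (established in Lemma~\ref{lm:hcvg}): in~(i), picking any $\xi_m\in\argmin h\cap(0,M)$, convexity forces $h\leq 0$ on $[0,\xi_m]$, and averaging $\tfrac{1}{\xi}\int_0^\xi f_+'(u(s))\,ds\geq \xi/2$ yields $f_+'(u_+)\geq 0$ in the limit; in~(iii), convexity together with $m=0$ pins $h\equiv 0$ on $[0,\xi_m]$, and the same averaging gives equality. Your argument replaces this global convexity input by a purely local tool, the one-sided expansion $h(\xi;u)=-f_+'(u_+)\xi+o(\xi)$ as $\xi\to 0^+$, at the cost of a case split on whether $\supp u'\cap(0,M)$ is separated from the interface.

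What each approach buys: the paper's route is shorter and avoids any case analysis, but relies on the structural Lemma~\ref{lm:hcvg}, whose proof in turn needs the wave-fan decomposition and the Lax inequalities across shocks. Your route is more self-contained, needing only the existence of the trace $u_+$ (immediate from $u\in BV$) and the continuity of $f_+'$; in particular it would survive in settings where the half-line convexity of $h$ is not available. The ``delicate'' accumulation case $\xi^*=0$ you single out is precisely the place where the paper's convexity argument glides through effortlessly while your local expansion has to do the work explicitly.
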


Before proving Proposition~\ref{prop:MR3}, let us introduce the following straightforward Lemma, describing the general form of the function $h(\cdot;u)$.

Let us first introduce some notations.
Consider any given CRD solution $u$ to \eqref{eq:left}--\eqref{eq:right}. 
From the preliminary Proposition~\ref{prop:MR2} and from the classical construction of admissible wave fans for scalar hyperbolic conservation laws (see Dafermos~\cite{Dafermos:2010dk}), the following properties are well-known.
The intervals $(-M,0)$ and $(0,M)$ may be decomposed into the union of three pairwise disjoint sets
$
(-M,0)=\mathcal{C}_-\cup\mathcal{S}_-\cup\mathcal{W}_-\textrm{ and }(0,M)=\mathcal{C}_+\cup\mathcal{S}_+\cup\mathcal{W}_+,
$
where we denote
\begin{itemize}
\item $\mathcal{C}_-=(-M,0)\setminus\supp\, u'$ and $\mathcal{C}_+=(0,M)\setminus\supp\, u'$.
\item $\mathcal{S}_-\cup\mathcal{S}_+$ is the (at most) countable set of points of discontinuity of $u$, across each of which the Rankine-Hugoniot jump relation and the entropy admissibility inequalities of Liu-Oleinik are satisfied (either for the flux $f_-$ over $\mathcal{S}_-$, or for $f_+$ over $\mathcal{S}_+$).
\item $\mathcal{W}_-\cup\mathcal{W}_+$ is the (possibly empty) set of points of continuity of $u$ that lie in $\supp\, u'\setminus\{0\}$.
\end{itemize}
Moreover, $u$ is discontinuous at $\xi=0$ if and only if $u_-\neq u_+$. We recall at this point that the Rankine-Hugoniot relation does not {a priori apply}.
{Let us introduce} the following bounds for the negative and positive waves:
\begin{equation}
\label{eq:boundwaves}
\begin{aligned}
&\Lambda_- := \inf \left(\mathcal{S}_-\cup\mathcal{W}_-\right),\quad & \lambda_-:=\sup \left(\mathcal{S}_-\cup\mathcal{W}_-\right),\\
&\lambda_+ := \inf \left(\mathcal{S}_+\cup\mathcal{W}_+\right),& \Lambda_+:=\sup \left(\mathcal{S}_+\cup\mathcal{W}_+\right).
\end{aligned}
\end{equation}
To deal with more degenerate situations, we set $\lambda_-=\Lambda_-:=0$ if $\mathcal{S}_-\cup\mathcal{W}_-=\emptyset$, and $\lambda_+=\Lambda_+:=0$ if $\mathcal{S}_+\cup\mathcal{W}_+=\emptyset$. 
Finally to handle with empty intervals, we consider the following convention: if $\mathcal{S}_-\cup\mathcal{W}_-=\{a\}$ for some $a<0$, we then set $\Lambda_-=\lambda_-=a$ and $(a,a)=\emptyset$.\\
As a consequence of the Lipschitz continuity of $f_-'$ and $f_+'$, the interior sets
\begin{equation}
\mathcal{H}_-:=\mathrm{int}\left(\mathcal{S}_-\cup\mathcal{W}_-\right)\textrm{ and }\mathcal{H}_+:=\mathrm{int}\left(\mathcal{S}_+\cup\mathcal{W}_+\right)
\end{equation}
are two open (possibly empty) intervals of $(-M,0)$ and $(0,M)$ respectively. Therefore one has in any case
\begin{equation}
\mathcal{H}_-=(\Lambda_-,\lambda_-) \textrm{ and } \mathcal{H}_+=(\lambda_+,\Lambda_+).
\end{equation}
In the sequel, the quantities $\Lambda_-, \lambda_-$ and $\lambda_+,\Lambda_+$ are referred to as the \emph{negative and positive wave speed bounds} for $u$.
\begin{lemma}
\label{lm:hcvg}
For any given Riemann data $(u_L,u_R)$, some real $M>0$ being prescribed according to the bounds~\eqref{eq:hyp-bound}, let us consider $\{u^\eps\}_{\eps>0}$ a subsequence of solutions to \eqref{eq:Dafermos}--\eqref{eq:Boundcond} that converges as $\eps$ goes to zero to a CRD solution $u$. Then the subsequence $\{h^\eps(\cdot;u^\eps)\}_{\eps>0}$ given by \eqref{eq:heps} converges uniformly over $\xi\in(-M,M)$ to the function $h(\cdot;u)$ defined by \eqref{eq:h}.\\
In addition, the function $\xi\mapsto h(\xi;u)$ is continuous, piecewise differentiable over $(-M,M)$, convex over $(-M,0)$, convex over $(0,M)$. Setting $\Lambda_-,\lambda_-$ and $\lambda_+,\Lambda_+$ as the negative and positive wave speed bounds according to~\eqref{eq:boundwaves}, then one has the (almost everywhere) formula
\begin{equation}
\label{eq:hprime}
h'(\xi;u) = 
\begin{cases}
\xi - f'_-(u_L), & \xi\in(-M,\Lambda_-),\\
0, & \xi \in (\Lambda_-,\lambda_-),\\
\xi - f'_-(u_-), & \xi\in(\lambda_-,0),\\
\xi - f'_+(u_+), & \xi\in(0,\lambda_+),\\
0, & \xi \in (\lambda_+,\Lambda_+),\\
\xi - f'_+(u_R), & \xi\in(\Lambda_+,M).
\end{cases}
\end{equation}
\end{lemma}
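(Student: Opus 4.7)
The plan is to prove the three assertions in the order in which they are stated: uniform convergence of $h^\eps(\cdot;u^\eps)$ to $h(\cdot;u)$, then the a.e.\ formula \eqref{eq:hprime}, and finally convexity on each half line.

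For the first claim, I would exploit that the second ODE of \eqref{eq:Dafermos} together with $v^\eps(\pm M)=\pm 1$ is explicitly solvable. Its derivative is Gaussian, $(v^\eps)'(\xi)=C_\eps\,e^{-\xi^2/(2\eps^2)}$, with $C_\eps$ chosen so that the total variation equals $2$. From this representation one reads $v^\eps\to\sgn$ pointwise on $\RR\setminus\{0\}$ (in fact uniformly on every compact subset of $\RR\setminus\{0\}$) while $\|v^\eps\|_\infty\leq 1$. Combined with the pointwise convergence of $u^\eps$ to $u$ along the defining subsequence and the uniform bound $|f_\pm'(u^\eps)|\leq M$, Lebesgue's dominated convergence applied to the integrand of \eqref{eq:heps} gives $h^\eps(\xi;u^\eps)\to h(\xi;u)$ for every fixed $\xi\in[-M,M]$ (after correcting the obvious misprint $(1-v^\eps)\mapsto (1+v^\eps)$ in the second factor). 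Since these integrands are uniformly bounded in $\eps$, the functions $h^\eps(\cdot;u^\eps)$ are equi-Lipschitz on $[-M,M]$; Arzelà--Ascoli (or a direct $3\eta$ argument) then upgrades pointwise convergence to uniform convergence.

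For the second claim, $h(\cdot;u)$ is the primitive of a bounded function, hence Lipschitz, and differentiating \eqref{eq:h} at Lebesgue points of $u$ yields $h'(\xi)=\xi-f_-'(u(\xi))$ a.e.\ on $(-M,0)$ and $h'(\xi)=\xi-f_+'(u(\xi))$ a.e.\ on $(0,M)$. I then plug in the wave-fan structure of $u$ summarized just before the lemma: on $(-M,\Lambda_-)$ one has $u\equiv u_L$, on $(\lambda_-,0)$ one has $u\equiv u_-$, producing the first and third branches of \eqref{eq:hprime}; on the interior rarefaction region $\mathcal{H}_-=(\Lambda_-,\lambda_-)$ the self-similar relation $u'(\xi)(f_-'(u(\xi))-\xi)=0$ forces $f_-'(u(\xi))=\xi$ a.e., so $h'(\xi)=0$ a.e.\ on $\mathcal{H}_-$, the countable shock set $\mathcal{S}_-$ being negligible. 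The right-hand branches follow by the symmetric argument with $f_+$.

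For the third claim, on $(-M,0)$ each of the three sub-intervals exhibits $h'$ as either the affine function $\xi-\text{const}$ or as identically zero, hence non-decreasing in each piece; convexity therefore reduces to checking that the possible jumps of $h'$ at $\Lambda_-$ and $\lambda_-$ are non-negative. Writing $u$ on $[\min(u_L,u_-),\max(u_L,u_-)]$ as the entropy wave fan associated with the lower convex envelope of $f_-$ (upper concave envelope in the decreasing case), the envelope matches $f_-$ at the two endpoints and lies below (resp.\ above) $f_-$ elsewhere. Comparing incremental ratios at these endpoints yields the one-sided bounds $\Lambda_-\leq f_-'(u_L)$ and $\lambda_-\geq f_-'(u_-)$, which are precisely the Liu--Oleinik inequalities for the extremal shocks of the fan. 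Both jumps of $h'$ are then non-negative and convexity on $(-M,0)$ follows; convexity on $(0,M)$ is obtained symmetrically. The step I expect to be the most delicate is the uniform convergence in the first paragraph: although the Gaussian profile of $v^\eps$ is explicit, one must still control its $O(\eps)$-wide layer at $\xi=0$ inside the integrand of $h^\eps$ uniformly in $\xi$, so that the transition zone does not spoil convergence across the coupling interface. The envelope comparisons in the third paragraph, though morally a rephrasing of Oleinik's entropy criterion, also require a uniform treatment of both monotonicity regimes and of the non-convex fluxes permitted by the standing Assumption.
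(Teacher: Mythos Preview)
Your argument is correct and, for the derivative formula \eqref{eq:hprime} and the convexity check, essentially identical to the paper's: the paper also shows that $h'$ is nondecreasing on each of the three sub-intervals and then handles the possible jumps at $\Lambda_-$ and $\lambda_-$ by a case distinction ($\Lambda_-\in\mathcal W_-$ versus $\Lambda_-\in\mathcal S_-$) that amounts exactly to your Liu--Oleinik/envelope comparison.

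The one genuine difference is the uniform convergence step. The paper does \emph{not} pass through dominated convergence and Arzel\`a--Ascoli. Instead it writes directly, after adding and subtracting $f_\pm'(u)$ and using the Lipschitz bound on $f_\pm'$,
\[
|h^\eps(\xi;u^\eps)-h(\xi;u)|
\;\le\;
\bigl(\|f_-''\|_\infty+\|f_+''\|_\infty\bigr)\!\int_{-M}^{M}\!|u^\eps-u|\,ds
\;+\;\tfrac12\bigl(\|f_-'\|_\infty+\|f_+'\|_\infty\bigr)\!\int_{-M}^{M}\!\bigl|\sgn-v^\eps\bigr|\,ds,
\]
valid for every $\xi\in(-M,M)$. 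The right-hand side is independent of $\xi$ and vanishes by the $L^1_{\loc}$ convergence of $u^\eps$ and $v^\eps$, so uniformity is immediate. This buys two things over your route: it avoids the passage from $L^1$ to a.e.\ pointwise convergence needed for your dominated-convergence step, and it shows that your closing worry about the $O(\eps)$-wide transition layer of $v^\eps$ is unfounded---a bounded integrand over a set of measure $O(\eps)$ contributes nothing, and in the paper's estimate this is absorbed in $\int|\sgn-v^\eps|$ without further thought. Your approach is perfectly valid, just longer; the layer is in fact the easiest part, not the hardest.
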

\begin{proof}
 Let $\xi$ be fixed in $(-M,M)$, we obtain the following rough upper bound
\begin{align*}
 &|h^\eps(\xi;u^\eps)-h(\xi;u)|\\
   &\leq \frac12\left|\int_0^\xi f_-'(u^\eps(s))(1-v^\eps(s)) - f_-'(u(s))\indic_{s<0}\, ds \right|\\
       & + \frac12\left|\int_0^\xi f_+'(u^\eps(s))(1+v^\eps(s)) - f_+'(u(s))\indic_{s>0}\,  ds \right| \\
  & \leq \int_{-M}^M \left|f_-'(u^\eps(s))-f_-'(u(s))\right|\indic_{s<0}\, ds
    + \frac12\int_{-M}^M \left|f_-'(u^\eps(s))\right| \left|\indic_{s>0} - \indic_{s<0} - v^\eps(s)\right| ds \\
  & +\int_{-M}^M \left|f_+'(u^\eps(s))-f_+'(u(s))\right|\indic_{s>0}\, ds
    + \frac12\int_{-M}^M \left|f_+'(u^\eps(s))\right| \left|\indic_{s>0} - \indic_{s<0} - v^\eps(s)\right| ds,
\intertext{and thus}
 |h^\eps(\xi;u^\eps)-h(\xi;u)|
 \leq 
 & \left(\|f_-''\|_\infty+\|f_+''\|_\infty\right) \int_{-M}^M |u^\eps-u|ds\\
  &\phantom{=}+ \frac12\left(\|f_-'\|_\infty+\|f_+'\|_\infty\right)\int_{-M}^M \left|\indic_{s>0} - \indic_{s<0} - v^\eps(s)\right| ds,
\end{align*}
where we used the identity $\indic_{s>0} + \indic_{s<0} = \indic_{s\in\RR}$ for almost every $s$.
The sequences $\{u^\eps\}_{\eps>0}$ and $\{v^\eps\}_{\eps>0}$ moreover converge in $L^1_{\loc}(\RR)$ to $u$ and to the sign function respectively, and thus $\{h^\eps(\cdot;u^\eps)\}_{\eps>0}$ converges uniformly to $h(\cdot;u)$ over $(-M,M)$.\\
The function $u$ is piecewise continuous over $(-M,M)$. More precisely for $\xi\in(-M,\Lambda_-)$ the solution $u(\xi)$ is nothing but the left Riemann data $u_L$, while for $\xi\in(\lambda_-,0)$ (if non-empty) $u(\xi)=u_-$. Finally for $\xi\in(\Lambda_-,\lambda_-)$ the wave fan $u(\xi)$ is nothing but the entropy wave fan for the Riemann problem with flux $f_-$, left data $u_L$ and right data $u_-$, i.e. the succession of rarefaction waves for $\xi\in\mathcal{W}_-$ and shock waves for $\xi\in\mathcal{S}_-$.
The same structure is observed over the right half-space, with consistent notations.

The points of discontinuity of $u$ are located in the (at most) countable set $\{0\}\cup\mathcal{S}_-\cup\mathcal{S}_+$. Therefore the function $h(\cdot;u)$ is piecewise differentiable at any point $\xi\in\mathcal{C}_-\cup\mathcal{W}_-\cup\mathcal{W}_+\cup\mathcal{C}_+$ and one has then $h'(\xi;u)=\xi-f_-'(u(\xi))$ if $\xi<0$ and $h'(\xi;u)=\xi-f_+'(u(\xi))$ if $\xi>0$. Moreover, for $\xi\in\mathcal{W}_\pm$, it is well-known that
\begin{equation}
\label{eq:raref}
f_\pm'(u(\xi))=\xi
\end{equation}
so that the formula \eqref{eq:hprime} is satisfied almost everywhere.

It remains to prove the convex character of $h(\cdot;u)$ over $(-M,0)$ (a similar proof apply over $(0,M)$). To that aim, {let us prove that} $h'(\xi;u)$ is a non-decreasing function of $\xi\in(-M,0)$. Thanks to \eqref{eq:hprime}, this function is non-decreasing over each of the open intervals $(-M,\Lambda_-)$, $(\Lambda_-,\lambda_-)$ and $(\lambda_-,0)$. Thence it remains to put in order the left and right traces of $h'(\cdot;u)$ at $\xi=\Lambda_-$ and $\xi=\lambda_-$. Two situations may arise: either $\Lambda_-\in\mathcal{W}_-$ or $\Lambda_-\in\mathcal{S}_-$ (and similarly for $\lambda_-$). In the first case, $u$ is continuous at $\Lambda_-$ with $u(\Lambda_-)=u_L$ so that $h(\cdot;u)$ is continuous at $\xi=\Lambda_-$ with $h(\Lambda_-;u)=0$. In the second case, the Lax inequalities for the entropy discontinuity located at $\xi=\Lambda_-$ read
\begin{equation*}
f_-'(u_L)=f_-'(u(\Lambda_-^-))>\Lambda_->f_-'(u(\Lambda_-^+)).
\end{equation*}
In that case, $h'(\Lambda_-^-;u)=\Lambda_- - f'_-(u_L) < 0 = h'(\Lambda_-^+;u)$.
The same line of reasoning apply at $\lambda_-$ if non zero, with $u_-$ in place of $u_L$ and reversed inequalities.
\end{proof}

In Figure~\ref{fig:hfunction}, we illustrate the previous properties for two different solutions $u(\xi)$ (bottom plots) and the corresponding structure for the corresponding functions $h(\xi;u)$ (top plots). The two graphs on the left figure concern a solution that is continuous at $\xi=0$ while the two others ones in the right {represent} a discontinuous solution. Notice here that all rarefaction waves are for convenience figured with affine lines. Observe that the first example presents a left wave fan with a sticked rarefaction on the left ($\Lambda_-\in\mathcal{W}_-$ so that $h'(\Lambda_-;u)=0$) and a shock wave on the right ($\lambda_-\in\mathcal{S}_-$ so that $h'(\lambda_-^-;u)<h'(\lambda_-^+;u)=0$).
According to Lemma~\ref{lm:hcvg}, the functions $h(\cdot;u)$ are piecewise convex with constant plateau over open intervals in which the solution $u$ presents a wave fan. These figures also illustrate Proposition~\ref{prop:MR3}, whose proof follows. The sets $\supp\,u'=\argmin_\xi h(\xi;u)$ are represented with thick line and dots, along the horizontal axis on the $u$-plots, and along the ordinate $\min h$ on the $h(\xi,u)$-plots.

Due to the third point in Corollary~\ref{cor:MR3}, the solution represented on the right, that is discontinuous at the interface has to satisfy the following property: $f_-'(u_-)=f_+'(u_+)=0$. In other words, any trace of the solution at the interface corresponds to a sonic point for the corresponding flux function.

\begin{figure}
\begin{center}
\includegraphics[scale=0.90]{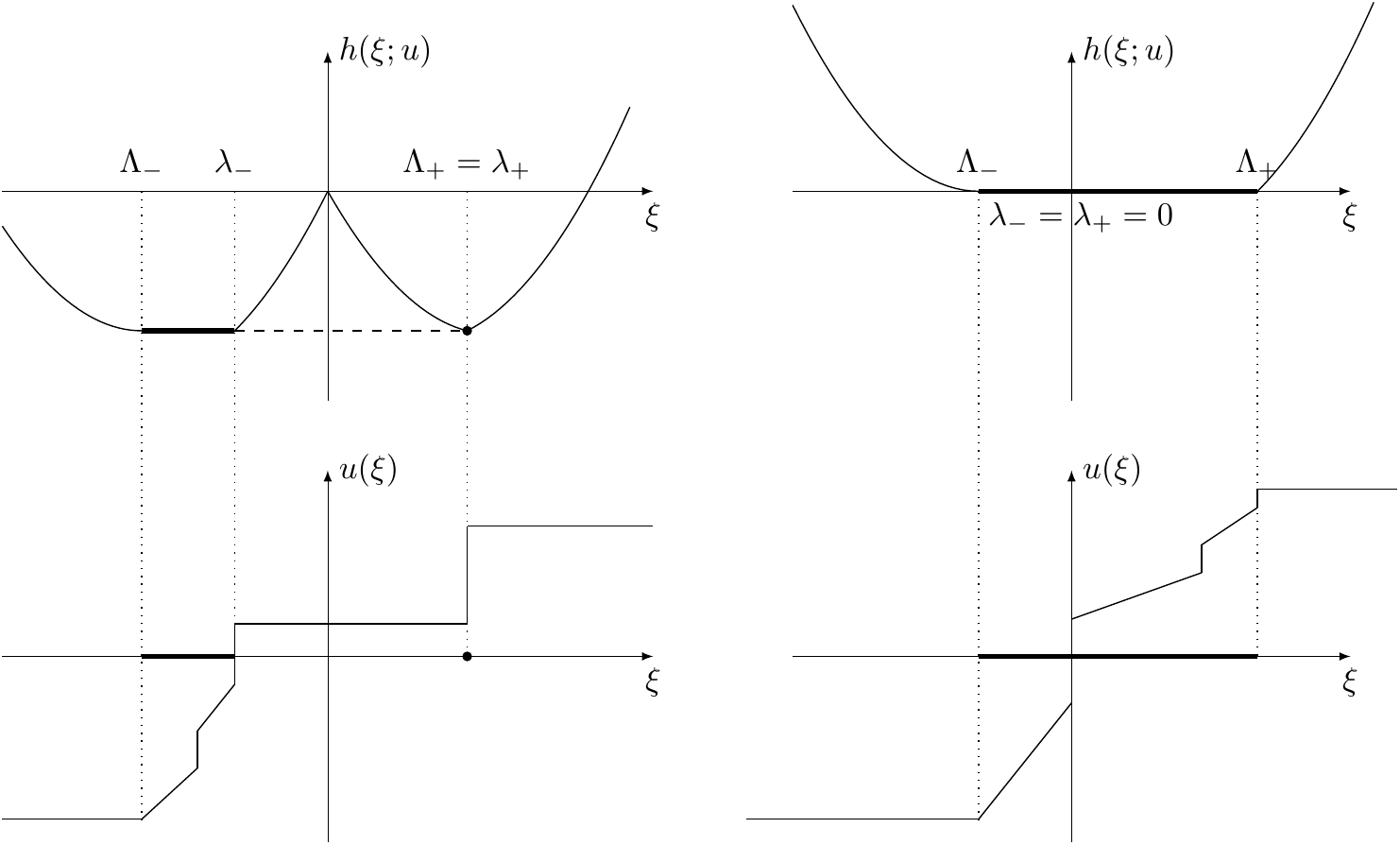}
\end{center}
\caption{Illustration of case (\ref{cor:main3point2}) of the Corollary~\ref{cor:MR3}. The solution $u$ is continuous at $\xi=0$ (left) vs. discontinuous at $\xi=0$ (right)}
\label{fig:hfunction}
\end{figure}

Now we are ready to prove Proposition~\ref{prop:MR3}.

\begin{proof}[Proof of Proposition~\ref{prop:MR3}]
From Lemma~\ref{lm:hcvg}, we get the inclusion $\argmin_{\xi} h(\xi;u)\subset[\Lambda_-,\lambda_-]\cup[\lambda_+,\Lambda_+]$ and thus immediately:
\begin{equation}
\argmin_{\xi} h(\xi;u) \subset \supp\ u'.
\end{equation}
Let us now consider some $\xi \notin \argmin_\xi h(\xi;u)$ and prove that necessarily $\xi \notin \supp\ u'$. Let us introduce the quantity $m=\min h(\cdot;u)$, the minimum being taken over $[-M,M]$, and  by $\xi_m\in[-M,M]$ a point such that $h(\xi_m;u) = m$. The uniform convergence of the sequence $\{h^\eps(\cdot;u^\epsilon)\}_{\eps>0}$ to $h(\cdot;u)$ on the one hand and the continuity of $h(\cdot;u)$ on the other hand ensure that for $\delta>0$ sufficiently small being given, there exists $\epsilon_0>0$ and $\eta>0$ such that
\begin{equation*}
\forall \epsilon<\epsilon_0,\ \forall s\in [-M,M],\ |s-\xi|<\eta\ \Rightarrow h^\eps(s;u^\epsilon) \geq m + \delta.
\end{equation*}
From \eqref{eq:repres} we then get, for any $\zeta\in[-M,M],\ |\zeta-\xi|<\eta$,
\begin{equation}
\label{eq:represbis}
u^\eps(\xi)-u^\eps(\zeta) = (u_R-u_L) \dfrac{\displaystyle\int_\zeta^\xi e^{-h^\eps(s;u^\eps)/\eps}\, ds}{\displaystyle\int_{-M}^M e^{-h^\eps(s;u^\eps)/\eps}\, ds},
\end{equation}
where
\begin{equation*}
\left|\int_\zeta^\xi e^{-h^\eps(s;u^\eps)/\eps}\, ds\right | \leq |\xi-\zeta| e^{-(m+\delta)/\epsilon}.
\end{equation*}
Again, thanks to the uniform convergence of $\{h^\eps(\cdot;u^\epsilon)\}_{\eps>0}$ and the continuity of $h(\cdot;u)$, there exists $\tilde\eta>0$ such that
\begin{equation*}
\forall \epsilon<\epsilon_0,\ \forall s\in[\xi_m-\tilde\eta,\xi_m+\tilde\eta],\ h^\eps(s;u^\eps)\leq m+\delta/2.
\end{equation*}
Therefore the lower integral in \eqref{eq:represbis} is bounded from below as follows:
\begin{equation*}
\left|\int_{-M}^M e^{-h^\eps(s;u^\eps)/\eps}\, ds\right| \geq 2\tilde\eta e^{-(m+\delta/2)/\epsilon}.
\end{equation*}
Finally, we get from \eqref{eq:represbis} and the above inequalities
\begin{equation*}
|u^\eps(\xi)-u^\eps(\zeta)| \leq \left|\dfrac{(u_R-u_L)(\xi-\zeta)}{2\tilde\eta}\right| e^{-\delta/(2\epsilon)}.
\end{equation*}
This last quantity vanishes as $\eps$ goes to zero and thus $u(\zeta)=u(\xi)$. Therefore the limiting solution $u$ is constant over $]\xi-\eta,\xi+\eta[$ and $\xi\notin \supp\ u'$.
\end{proof}

We continue and finish this section with the proof of Corollary~\ref{cor:MR3}.

\begin{proof}[Proof of Corollary~\ref{cor:MR3}]
\ 
\begin{enumerate}
\renewcommand{\theenumi}{\roman{enumi}}
\renewcommand{\labelenumi}{(\theenumi)}
\item The proof relies on a contraposition argument. Suppose that $\supp\,u'\cap(0,M)\neq \emptyset$ then, thanks to Proposition~\ref{prop:MR3}, one has $\argmin_{\xi\in[-M,M]} h(\xi;u)\cap(0,M)\neq\emptyset$. In other words, $h(\cdot;u)$ reaches its global minimum at some $\xi_m>0$.\\
Observe then $h(0;u)=0$ so that necessarily $h(\xi_m;u)\leq h(0;u) \leq 0$ and due to the convex character of $h(\cdot;u)$ over $(0,M)$ (from Lemma~\ref{lm:hcvg}), $h(\xi;u)\leq 0$ for all $\xi\in[0,\xi_m]$. Consequently, consider $\xi\in(0,\xi_m)$ then
\begin{equation*}
\frac{1}{\xi}\int_0^\xi (s-f_+'(u(s)))\,ds \leq 0, 
\end{equation*}
so that
\begin{equation*}
\frac{1}{\xi}\int_0^\xi f_+'(u(s))\,ds \geq \frac{1}{\xi}\int_0^\xi s\, ds = \frac \xi2 \geq 0, 
\end{equation*}
and, passing to the limit $\xi$ tends to zero, one gets
$
f_+'(u_+)\geq 0.
$
The same line of reasoning apply for the second part of the proposition.

\item We just proved that if $I_-:=\supp\,u'\cap(-M,0)\neq \emptyset$ and $I_+:=\supp\,u'\cap(0,M)\neq \emptyset$ then both inequalities are satisfied: $f_-'(u_-)\leq 0$ and $f_+'(u_+)\geq 0$.\\
Moreover, Proposition~\ref{prop:MR3} ensures that $h(I_-;u)=h(I_+;u)=\{\min_{\xi\in[-M,M]}h(\xi;u)\}$ and the conclusion follows.

\item  We already understood in point (i) that assuming $\supp\,u'\cap(0,M)\neq\emptyset$ one has $f_+(u_+)\geq 0$.
\item 
Assume now the discontinuity at $\xi=0$: $u_-\neq(0^+)$, then
\[0\in \supp\, u' = \argmin_{\xi\in[-M,M]}h(\xi;u),\]
and thus $\min h(\cdot;u)=h(0;u)=0$.
Let $\xi_m>0$ be some point in $\supp\,u'\cap(0,M)$, with therefore $h(\xi_m;u)=0$, and following the same argument as in (i) based on both the convex character and the positiveness of $h(\cdot;u)$ one gets $\forall \xi\in[0,\xi_m]\ h(\xi;u)=0$, so that
\begin{equation*}
\frac{1}{\xi}\int_0^\xi f_+'(u(s))\,ds = \frac{1}{\xi}\int_0^\xi s\, ds = \frac \xi2.
\end{equation*}
Passing to the limit $\xi$ tends to zero gives the expected result: $f_+'(u_+)=0$.
Of course, the same line of reasoning apply for the second part of the proposition.
\end{enumerate}
\end{proof}

%
%
%

\section{Analysis of the interfacial layer}
\label{sec:layer}

In this section, we characterize the behavior of the limiting solution across the interface $\{\xi=0\}$ revealing the possible boundary layers connecting the traces $u_-$ and $u_+$ of $u$ at $\xi=0^{-}$ and at $\xi=0^{+}$ respectively.
To that purpose, we proceed to a classical \emph{blow-up} at the origin through the change of variable $\xi=\eps y$. Let us define
\begin{equation}
\label{eq:blowup}
\calU^\eps(y)=u^\eps(\eps y),\qquad
\calV^\eps(y)=v^\eps(\eps y),\qquad \textrm{for }\ \eps|y|\leq M.
\end{equation}
For convenience the functions $\calU^\eps$ and $\calV^\eps$ are defined over the whole real line, using the natural boundary conditions data~\eqref{eq:Boundcond} as constant extensions outside the above domain. The results of this section are summarized in the following Theorem.
\begin{theorem}
\label{thm:RelaxLayer}
Let $f_-$ and $f_+$ be two functions in $\calC^1(\RR)$ such that $f_\pm'$ are Lipschitz continuous, let $u^\eps\in L^\infty(\RR)\cap BV(\RR)$ and $v^\eps\in L^\infty(\RR)\cap BV(\RR)$ be solutions of \eqref{eq:Dafermos}--\eqref{eq:Boundcond}, and {consider} $\calU^\eps$ and $\calV^\eps$ given by \eqref{eq:blowup}. Then the following facts are satisfied.
\begin{enumerate}

 \item \label{thm:RL1} The sequence $\{\calV^\eps\}_{\eps>0}$ converges uniformly as $\eps>0$ to $\calV\in\calC^\infty(\RR)$ a bounded monotone increasing function, given by
 \begin{equation}
 \label{eq:limitV}
 \calV(y):=-1+2 \int_{-\infty}^y e^{-{s^2/2}}\,ds\left/\int_{-\infty}^{+\infty} e^{-{s^2/2}}\,ds\right..
\end{equation}
More precisely, there exists a positive constant $C$, independent of $\eps>0$, such that the following estimate holds:
\begin{equation}
\label{eq:LinfestimV}
\|\calV^\epsilon-\calV\|_{L^\infty (\RR)}\leq C e^{-M^2/2\epsilon^2}.
\end{equation}

 \item\label{thm:RL2} Up to a subsequence, $\{\calU^\eps\}_{\eps>0}$ converges strongly in $L^1_{\rm loc}(\RR)$ to $\calU\in\calC^2(\RR)$, a bounded and monotone function, whose monotonicity corresponds to the sign of $u_R-u_L$. The limit $\calU$ is a solution to the viscous profile ODE:
\begin{equation}
\label{eq:RLeqU}
 \dfrac12\left((1-\calV)f_-'(\calU)+(1+\calV)f_+'(\calU)\right)\calU' = \calU'',
\end{equation}
and admits limits at infinities, denoted by $\calU_{-\infty}$ and $\calU_{+\infty}$ respectively, and that satisfy the inequalities
\begin{equation}
\label{eq:Linfbound}
 \min(u_L,u_R) \leq \calU_{\pm\infty} \leq \max(u_L,u_R).
\end{equation}

\item\label{thm:RL3} The inner profile is non trivial i.e. $\calU'(y)\neq 0$ for all finite $y$ in $\RR$ (then necessarily $\calU'(y)$ vanishes as $|y|$ goes to infinity) if and only if the following two asymptotic conditions are fulfilled:
\begin{gather}
\lim_{y\to+\infty} \int_0^y f'_+(\calU(s))\, ds = -\infty,\label{eq:condasymptR}\\
\lim_{y\to-\infty} \int_y^0 f'_-(\calU(s))\, ds = +\infty.\label{eq:condasymptL}
\end{gather}

\end{enumerate}

\end{theorem}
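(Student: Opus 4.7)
\textbf{Part (\ref{thm:RL1}).} The change of variable $\xi=\eps y$ converts the second equation of \eqref{eq:Dafermos} into the $\eps$-independent ODE $\calV^{\eps\prime\prime}+y\,\calV^{\eps\prime}=0$ on $[-M/\eps,M/\eps]$ with boundary values $\pm 1$. Integration gives $\calV^{\eps\prime}(y)=C_\eps e^{-y^{2}/2}$, and fitting the boundary data produces the explicit formula
\[
\calV^\eps(y)=-1+2\,\frac{\int_{-M/\eps}^{y}e^{-s^{2}/2}\,ds}{\int_{-M/\eps}^{M/\eps}e^{-s^{2}/2}\,ds}.
\]
As $\eps\to 0$ the truncation endpoints tend to $\pm\infty$ and the denominator to $\sqrt{2\pi}$, yielding \eqref{eq:limitV}, while the classical Gaussian tail estimate $\int_{M/\eps}^{+\infty}e^{-s^{2}/2}\,ds=O(e^{-M^{2}/(2\eps^{2})})$ gives~\eqref{eq:LinfestimV}.

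\textbf{Part (\ref{thm:RL2}).} Monotonicity and boundedness of $u^\eps$ from Proposition~\ref{prop:MR2} transfer to $\calU^\eps$, which therefore has total variation $|u_R-u_L|$ on $\RR$. Multiplying the first equation of~\eqref{eq:Dafermos} by $\eps$ and substituting $\xi=\eps y$ yields the linear second-order equation
\[
\calU^{\eps\prime\prime}(y)=A^\eps(y)\,\calU^{\eps\prime}(y),\qquad A^\eps(y):=-\eps y+\tfrac12\bigl((1-\calV^\eps)f_-'(\calU^\eps)+(1+\calV^\eps)f_+'(\calU^\eps)\bigr)(y),
\]
which integrates as $\calU^{\eps\prime}(y)=\calU^{\eps\prime}(0)\exp\int_{0}^{y}A^\eps$. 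On any compact $[-R,R]$ the coefficient $A^\eps$ is uniformly bounded for $\eps\le 1$; combined with the $BV$ budget $\int_\RR|\calU^{\eps\prime}|\,dy\le|u_R-u_L|$ this Gronwall-type representation controls $\calU^{\eps\prime}(0)$, and hence $\calU^{\eps\prime}$, uniformly on compacts. Helly's selection theorem gives a subsequential limit $\calU$ in $L^{1}_{\loc}(\RR)$; ODE bootstrapping upgrades the convergence to $\calC^{2}_{\loc}$, and Part~(\ref{thm:RL1}) allows passing to the limit in the coefficient to obtain~\eqref{eq:RLeqU}. Monotonicity is inherited and, together with boundedness, guarantees the existence of $\calU_{\pm\infty}\in[\min(u_L,u_R),\max(u_L,u_R)]$.

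\textbf{Part (\ref{thm:RL3}).} Writing the limit equation as $\calU''=A\calU'$ with $A(y):=\tfrac12\bigl((1-\calV)f_-'(\calU)+(1+\calV)f_+'(\calU)\bigr)(y)$ bounded on $\RR$, integration yields $\calU'(y)=\calU'(0)\exp\int_{0}^{y}A(s)\,ds$. By uniqueness, vanishing of $\calU'$ at any finite point forces $\calU'\equiv 0$ (trivial profile); otherwise $\calU'$ has constant sign and never vanishes. In the non-trivial case, boundedness and monotonicity give $\calU'\in L^{1}(\RR)$, so the function $g(y):=\int_{0}^{y}A(s)\,ds$ is Lipschitz (since $A$ is bounded) with $e^{g}\in L^{1}$ near $\pm\infty$; a standard Lipschitz argument then forces $g(y)\to-\infty$ at $+\infty$ and $g(y)\to+\infty$ at $-\infty$, whence $\calU'(y)\to 0$ at $\pm\infty$. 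Non-triviality is therefore equivalent to
\[
\int_{0}^{+\infty}A(s)\,ds=-\infty\quad\text{and}\quad\int_{-\infty}^{0}A(s)\,ds=+\infty.
\]
Using the tail estimates $|1-\calV(s)|=O(e^{-s^{2}/2}/s)$ as $s\to+\infty$ and $|1+\calV(s)|=O(e^{-s^{2}/2}/|s|)$ as $s\to-\infty$, the ``minority'' term contributes only an absolutely convergent integral, and the two conditions above reduce respectively to~\eqref{eq:condasymptR} and \eqref{eq:condasymptL}.

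\textbf{Main obstacle.} The delicate step is Part~(\ref{thm:RL2}): once the blow-up is performed the effective diffusion is of unit order but the transport coefficient remains $O(1)$, so no classical parabolic estimate applies. One must carefully exploit the global $BV$ budget through the exponential representation in order to preclude concentration of $\calU^{\eps\prime}$ at $y=0$ before extracting a subsequence; this is what produces the uniform local bounds needed to pass to the limit in the nonlinear coefficient.
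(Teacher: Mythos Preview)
Your proposal is correct. Part~(\ref{thm:RL1}) follows the same lines as the paper. For Parts~(\ref{thm:RL2}) and~(\ref{thm:RL3}) you take genuinely different routes, both valid.

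In Part~(\ref{thm:RL2}), the paper passes to the limit via a weak formulation: it tests the rescaled equation against compactly supported smooth functions, integrates by parts, and uses dominated convergence together with the estimate~\eqref{eq:LinfestimV} to obtain~\eqref{eq:RLeqU} in the distributional sense; the smoothness of the coefficient $\calV$ then upgrades $\calU$ to a classical $\calC^2$ solution. You instead exploit the exponential representation $\calU^{\eps\prime}(y)=\calU^{\eps\prime}(0)\exp\int_0^y A^\eps$ and the global $BV$ budget to get uniform local $\calC^1$ bounds directly, after which $\calC^2_{\loc}$-convergence follows by Arzel\`a--Ascoli and the uniform convergence of $A^\eps$. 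Your approach is more direct but leans on the scalar structure (the first-order ODE for $\calU^{\eps\prime}$); the paper's is the standard compactness-plus-weak-limit scheme and would adapt more readily to systems.

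In Part~(\ref{thm:RL3}), the paper factors $\calU'(y)=\calU'(0)\,\calL(y)\,\ell(y)$ with $\calL(y)=\exp\int_0^y f_+'(\calU)$, and to conclude that $\calL(y)\to 0$ it invokes the standing assumption that $f_+$ has finitely many inflection points, which guarantees that $f_+'(\calU(s))$ keeps a constant sign for large $s$ so that $\calL$ has a limit. Your argument---``$g$ Lipschitz with $e^{g}\in L^{1}$ near $+\infty$ forces $g(y)\to-\infty$''---is more elementary and dispenses with the inflection-point hypothesis altogether. One small slip: the same Lipschitz--$L^1$ argument gives $g(y)\to-\infty$ as $y\to-\infty$ as well (not $+\infty$); this is harmless since your stated integral conditions $\int_0^{+\infty}A=-\infty$ and $\int_{-\infty}^{0}A=+\infty$ are the correct ones and reduce to~\eqref{eq:condasymptR}--\eqref{eq:condasymptL} after absorbing the integrable ``minority'' terms. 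As in the paper, only the forward implication of the ``if and only if'' is actually argued.
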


\bigskip

As we will prove, the asymptotic conditions \eqref{eq:condasymptR}--\eqref{eq:condasymptL} reflects the property that the inner profile must stay uniformly bounded in $y$. In particular, if one of the following two conditions $f'_-(\calU_{-\infty})<0$ or $f'_+(\calU_{+\infty})>0$ is satisfied, then the inner solution must stay constant.

By contrast, both the conditions $f'_-(\calU_{-\infty})>0$ and $f'_+(\calU_{+\infty})<0$ clearly suffice to imply \eqref{eq:condasymptR}--\eqref{eq:condasymptL} and in fact these conditions reflect that the orbit $\calU(y)$ connects transversally the unstable endpoint $\calU_{-\infty}$ to the stable endpoint $\calU_{+\infty}$, in the sense of the {Hartman-Grobman theory \cite{Shivamoggi:2014}}.
Observe that \eqref{eq:condasymptR}--\eqref{eq:condasymptL} may be valid with vanishing wave velocities at one or even at both endpoints. Some examples of such situations will be proposed hereafter.

\begin{proof}[Proof of point (\ref{thm:RL1}) of Theorem~\ref{thm:RelaxLayer}]
{Let us first consider} some given $y\in(-M/\eps,M/\eps)$ and compute then
\begin{multline*}
|\calV^\eps(y)-\calV(y)|
	= 2\left| \dfrac{\int_{-M/\epsilon}^{y} e^{-{s^2/2}}\,ds}{\int_{-M/\eps}^{+M/\eps} e^{-{s^2/2}}\,ds} 
		- \dfrac{\int_{-\infty}^y e^{-{s^2/2}}\,ds}{\int_{-\infty}^{+\infty} e^{-{s^2/2}}\,ds} \right|\\
	= 2\dfrac{\left|\int_{-\infty}^{+\infty} e^{-{s^2/2}}\,ds\int_{-M/\eps}^{y} e^{-{s^2/2}}\,ds
		- \int_{-M/\eps}^{+M/\eps} e^{-{s^2/2}}\,ds\int_{-\infty}^{y} e^{-{s^2/2}}\,ds \right|}{\int_{-M/\eps}^{+M/\eps} e^{-{s^2/2}}\,ds\int_{-\infty}^{+\infty} e^{-{s^2/2}}\,ds}.
\end{multline*}
The denominator reads $(\int_{-\infty}^{+\infty} e^{-{s^2/2}}\,ds)^2 + o(\eps) = 2\pi + o(\eps)$. Consequently there exists a positive constant $c$ such that
\begin{align*}
|\calV^\eps(y)-\calV(y)| &\leq c \left|\int_{-\infty}^{+\infty} e^{-{s^2/2}}\,ds\int_{-M/\eps}^{y} e^{-{s^2/2}}\,ds
	- \int_{-M/\eps}^{+M/\eps} e^{-{s^2/2}}\,ds\int_{-\infty}^{y} e^{-{s^2/2}}\,ds \right|\\
	&\leq c \left|\int_{-\infty}^{+\infty} e^{-{s^2/2}}\,ds - \int_{-M/\eps}^{+M/\eps} e^{-{s^2/2}}\,ds\right|\ \left|\int_{-M/\eps}^{y} e^{-{s^2/2}}\,ds \right|\\
		&\hspace{1em}+ c \left|\int_{-M/\eps}^{y} e^{-{s^2/2}}\,ds - \int_{-\infty}^{y} e^{-{s^2/2}}\,ds\right|\ \left|\int_{-M/\eps}^{M/\eps} e^{-{s^2/2}}\,ds \right|\\
	&\leq 3 c \left|\int_{-\infty}^{+\infty} e^{-{s^2/2}}\,ds \right|\ \left|\int_{M/\eps}^{+\infty} e^{-{s^2/2}}\,ds \right|\\
	&\leq C e^{-M^2/2\eps^2},
\end{align*}
due to the the asymptotic behavior of the usual error function.\\
Considering now some $y\in\RR$ with $|y|\geq M/\eps$, the same last argument gives
\begin{equation*}
|\calV^\eps(y)-\calV(y)|
	= |1-\calV(y)| \leq C e^{-y^2/2} \leq C e^{-L^2/2\eps^2}.\qedhere
\end{equation*}
\end{proof}

\bigskip

\begin{proof}[Proof of point~(\ref{thm:RL2}) of Theorem~\ref{thm:RelaxLayer}]
The rescaled profile $\calU^\eps$ clearly achieves the same monotonicity property as $u^\eps$ and stays uniformly bounded in sup-norm since $u^\eps$ does. As a consequence, $\calU^\eps$ is uniformly bounded in $BV(\RR)$.\\
Thanks to Helly's Theorem, a classical diagonal extraction procedure yields the existence of an extracted subsequence, still denoted $\{\calU^\eps\}_{\eps>0}$, which strongly converges in $L^1_{\rm loc}(\RR)$ to some limit profile $\calU\in L^\infty(\RR)\cap BV(\RR)$, with the following property:
\begin{equation}
\min(u_L,u_R) \leq \calU(y) \leq \max(u_L,u_R),\quad y\in\RR.
\end{equation}
Due to its monotonicity property, $\calU$ admits finite limits as $y$ goes to $+\infty$ (respectively $-\infty$), {which} we denote $\calU_{+\infty}$ (resp. $\calU_{-\infty}$), that also satisfy \eqref{eq:Linfbound}.

Let us now prove that $\calU$ actually solves \eqref{eq:RLeqU}. We will first show that $\calU$ satisfies \eqref{eq:RLeqU} in the usual sense of the distributions {and} then observe that $\calU$ is indeed a strong solution due to the smoothness of $\calV(y)$.

Let $\eps>0$ be fixed. {Let $\xi\in(-M,M)$ be given} so that the first equation in \eqref{eq:Dafermos} rewrites in the fast variable $y\in(-M/\eps,M/\eps)$
\begin{equation*}
 -\eps y \calU^\eps{}'(y) + \dfrac12\left((1-\calV^\eps(y))f_-'(\calU^\eps(y))+(1+\calV^\eps(y))f_+'(\calU^\eps(y))\right)\calU^\eps{}'(y) = \calU^\eps{}''(y).
\end{equation*}
Let $\varphi\in\calC_c^\infty(\RR)$ be a test function and, for $\eps >0$, let $\chi^\eps$ be a standard smooth cut off function with compact support included in $]-M/\eps,M/\eps[$ and such that $\|\chi^\eps\|_{L^\infty(\RR)}=1$. {Let us now introduce} $\phi^\eps=\varphi \chi^\eps$ so that $\phi^\eps\in\calC^\infty_c(]-M/\eps,M/\eps[)$. Using classical integration by parts, the weak form of the above equation reads:
\begin{equation*}
\dfrac12\int_\RR f_+(\calU^\eps)\{\phi^\eps{}' + \phi^\eps \calV^\eps{}'\} + f_-(\calU^\eps)\{\phi^\eps{}' - \phi^\eps \calV^\eps{}'\} \, dy + \int_\RR \calU^\eps \phi^\eps{}'' \,dy = -\eps \int_\RR \calU^\eps{}' (y\phi^\eps) \,dy.
\end{equation*}
We thus infer
\begin{align*}
&\left| \dfrac12 \int_\RR f_+(\calU^\eps)\{\phi^\eps{}' + \phi^\eps \calV{}'\} + f_-(\calU^\eps)\{\phi^\eps{}' - \phi^\eps \calV{}'\} \, dy + \int_\RR \calU^\eps \phi^\eps{}'' \,dy \right| \\
& \leq \|f_+(\calU^\eps)-f_-(\calU^\eps)\|_{L^\infty(\RR)}\ \|\varphi\|_{L^1(\RR)}\ \|\calV-\calV^\eps\|_{L^\infty(\RR)} + \eps\ TV_\RR(\calU^\eps)\ \|(y\varphi)\|_{L^\infty(\RR)}\\
& \leq C \eps,
\end{align*}
for some $C>0$ independent of $\eps$, thanks to \eqref{eq:LinfestimV} and since $\calU^\eps$ is uniformly bounded in $BV$-norm. Then observe that $|f_\pm(\calU^\eps)|\leq C |\calU^\eps| + |f_{\pm}(0)|$ with $f_\pm(\calU^\eps(y))\to f_\pm(\calU(y))$ almost everywhere for $y\in\RR$. The Lebesgue dominated convergence theorem applies to get in the limit $\eps$ goes to 0:
\begin{equation}
\label{eq:weakform}
\dfrac12\int_\RR f_+(\calU)\{\varphi' + \varphi \calV'\} + f_-(\calU)\{\varphi' - \varphi \calV'\} \, dy + \int_\RR \calU\, \varphi'' \,dy = 0,
\end{equation}
for all test function $\varphi\in\calC^\infty_c(\RR)$.\\
Hence the limit profile $\calU$ solves the weak form of \eqref{eq:RLeqU}. Observe that \eqref{eq:RLeqU} is a second order differential equation with a smooth varying coefficient $\calV(y)$ so that $\calU$ in \eqref{eq:weakform} also solves \eqref{eq:RLeqU} in the strong sense. This solution is clearly defined for all $y\in\RR$ and is at least twice differentiable in view of the smoothness assumption on the functions $f_-$, $f_+$, and $\calV$.
\end{proof}

\bigskip
\begin{proof}[Proof of point~(\ref{thm:RL3}) of Theorem~\ref{thm:RelaxLayer}]
Let us prove at last the asymptotic conditions \eqref{eq:condasymptR}-\eqref{eq:condasymptL} in the case of a nonconstant inner solution $\calU$. We may thus assume that $\calU'(0)\neq 0$.
We prove that $\calU'(y)$ goes to zero as $|y|$ goes to infinity. A direct integration of the governing equation \eqref{eq:RLeqU} yields for all $y>0$:
\[\calU'(y) = \calU'(0)\exp\left(\int_0^y f_+'(\calU(s))ds\right)\exp\left(\int_0^y \frac{1-\calV(s)}{2}\left(f_-'(\calU(s))-f_+'(\calU(s))\right)ds\right).\]
In one hand, observe that $\calV(s)$ goes exponentially fast to 1 as $s$ goes to infinity, while $f_-'(\calU(s))-f_+'(\calU(s))$ stays uniformly bounded. Hence the second exponential factor converges to a strictly positive limit $\ell>0$ as $y$ goes to infinity.\\
In the other hand, let us define now
\[\calL(y):=\exp\left(\int_0^y f'_+(\calU(s))\,ds\right),\]
which is clearly positive for all finite $y>0$.
Next, $\calU(s)$ monotonically reaches a finite limit $\calU_{+\infty}$ as $s$ goes to infinity, but since $f_+$ admits a finite number of inflection points, necessarily $f'_+(\calU(s))$ keeps a constant sign for large enough values of $s$. Consequently $\calL(y)$ admits a nonnegative limit as $y$ tends to $+\infty$, which may be finite or not. Assume that this limit is strictly positive, i.e. assume that there exists a strictly positive $\omega>0$ such that,
\begin{equation}\label{hyp:contrad}\calL(y)>\omega>0,\end{equation}
for all $y>0$, then necessarily $(u_R-u_L)\calU'(y)>(u_R-u_L)\calU'(0)\omega \ell$ since $(u_R-u_L)\calU'(0)>0$, so that by integration $(u_R-u_L)(\calU(y)-\calU(0))>(u_R-u_L)\calU'(0)\omega \ell y$.
This rises a contradiction with the uniform boundedness of the inner solution $\calU(y)$; so that \eqref{hyp:contrad} cannot hold true. Hence necessarily $\calL(y)$ must vanish as $y$ goes to infinity. Necessarily $\int_0^y f'_+(\calU(s))\, ds$ must tend to $-\infty$ in this limit.\\
The companion asymptotic condition \eqref{eq:condasymptL} can be proved following the same steps.
\end{proof}


\section{Study of the matching conditions}
\label{sec:matching}

In the present section, we study the matching conditions in between the different interfacial quantities involved in the previous statements: the traces nearby the coupling interface $u_-$ and $u_+$, and the limits of the internal coupling layer $\calU_{-\infty}$ and $\calU_{+\infty}$. The following Theorem summarizes the results of this section.

\begin{theorem}
\label{thm:matchCond}
Under the assumptions of Theorem~\ref{thm:RelaxLayer} and with the same notations, we have the following matching conditions in between the traces $u_{\pm}$ of $u$ at $\xi=\pm 0$ and the limits $\calU_{\pm\infty}$ of $\calU$ at $y=\pm\infty$.
\begin{itemize}
\item The right trace $u_+$ and the endpoint $\calU_{+\infty}$ satisfy 
\begin{equation}
\label{eq:monotL}
(u_R-u_L)(u_+-\calU_{+\infty})\geq 0
\end{equation} with
\begin{equation}
f_+(\calU_{+\infty}) = f_+(u_+),
\end{equation}
 together with
\begin{gather}
\sgn(u_+-k)(f_+(u_+)-f_+(k))\leq 0,\label{eq:OleinikL1}\\
\sgn(\calU_{+\infty}-k)(f_+(\calU_{+\infty})-f_+(k))\geq 0,\label{eq:OleinikL2}
\end{gather}
for all $k\in [\calU_{+\infty},u_+]$. {In particular, as soon as} $\calU_{+\infty}\neq u_+$,
\begin{equation}
\label{eq:RLconsR}
f'_+(\calU_{+\infty})\geq 0 \textrm{ and } f'_+(u_+)\leq 0,
\end{equation}
with strict inequalities in the case of a genuinely non linear flux $f_+$.

\smallskip
\item The left trace $u_-$ and the endpoint $\calU_{-\infty}$ satisfy 
\begin{equation}
\label{eq:monotR}
(u_R-u_L)(\calU_{-\infty}-u_-)\geq 0
\end{equation}
with
\begin{equation}
 f_-(u_-)= f_-(\calU_{-\infty}),
\end{equation} 
  together with
\begin{gather}
\sgn(u_--k)(f_-(u_-)-f_-(k))\geq 0,\label{eq:OleinikR1}\\
\sgn(\calU_{-\infty}-k)(f_-(\calU_{-\infty})-f_-(k))\leq 0,\label{eq:OleinikR2}
\end{gather}
for all $k\in [u_-,\calU_{-\infty}]$. In particular, and as soon as $\calU_{-\infty}\neq u_-$,
\begin{equation}
\label{eq:RLconsL}
f'_-(u_-)\geq 0\textrm{ and } f'_-(\calU_{-\infty})\leq 0 ,
\end{equation}
with strict inequalities in the case of a genuinely non linear flux $f_-$.

\smallskip
\item In addition and in the case of a non trivial inner solution, the endpoints $\calU_{-\infty}\neq \calU_{+\infty}$ must obey
\begin{equation}
(u_R-u_L)(\calU_{+\infty}-\calU_{-\infty})>0,
\end{equation}
and
\begin{equation}
\label{eq:nontrivial}
f'_-(\calU_{-\infty})\geq 0 \textrm{ and } f'_+(\calU_{+\infty})\leq 0.
\end{equation}

\end{itemize}
\end{theorem}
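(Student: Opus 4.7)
The plan is to obtain all four conclusions by integrating the first Dafermos equation (and its Kruzkov companion) in an intermediate region straddling the coupling layer, and then performing a three-stage limit: first $\eps\to 0$, then the outer cut-off $\delta\to 0^+$, and finally the inner cut-off $Y\to+\infty$. The monotonicity \eqref{eq:monotL} is the easiest step. The representation formula \eqref{eq:repres} shows that $u^\eps$ is monotone with the sign of $u_R-u_L$, so that for any fixed $\xi>0$, $Y>0$ and any $\eps<\xi/Y$ one has $\sgn(u_R-u_L)(u^\eps(\xi)-\calU^\eps(Y))\ge 0$ with $\calU^\eps(Y)=u^\eps(\eps Y)$. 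The $L^1_{\loc}$ convergences $u^\eps\to u$ and $\calU^\eps\to\calU$ from Proposition~\ref{prop:MR2} and Theorem~\ref{thm:RelaxLayer} then yield, in the successive limits $\eps\to 0$, $\xi\to 0^+$, $Y\to+\infty$, the inequality $\sgn(u_R-u_L)(u_+-\calU_{+\infty})\ge 0$. The symmetric argument gives \eqref{eq:monotR}.

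For the flux identity $f_+(u_+)=f_+(\calU_{+\infty})$, I would recast the first Dafermos equation in the conservative form
\begin{equation*}
\frac{d}{d\xi}\bigl(F^\eps-\xi u^\eps-\eps(u^\eps)'\bigr)=-u^\eps-\tfrac12(v^\eps)'\bigl(f_+(u^\eps)-f_-(u^\eps)\bigr),
\end{equation*}
with $F^\eps:=\tfrac12(1-v^\eps)f_-(u^\eps)+\tfrac12(1+v^\eps)f_+(u^\eps)$, and integrate it on $(\eps Y,\delta)$ for some $\delta>0$ chosen outside $\argmin h(\cdot;u)$. In the limit $\eps\to 0$, the boundary contribution at $\xi=\delta$ converges to $f_+(u(\delta))-\delta u(\delta)$ thanks to $\eps(u^\eps)'(\delta)\to 0$ (which follows from \eqref{eq:repres} and Lemma~\ref{lm:hcvg} since $h^\eps(\delta;u^\eps)$ strictly exceeds its minimum); the boundary contribution at $\xi=\eps Y$, obtained via the blow-up $\xi=\eps s$, converges to $\tfrac12(1-\calV(Y))f_-(\calU(Y))+\tfrac12(1+\calV(Y))f_+(\calU(Y))-\calU'(Y)$; and the integral involving $(v^\eps)'$, after the same rescaling, tends to $\int_Y^{+\infty}\tfrac12\calV'(s)(f_+(\calU(s))-f_-(\calU(s)))\,ds$, the convergences being controlled by \eqref{eq:LinfestimV}. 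Sending then $\delta\to 0^+$ makes $u(\delta)\to u_+$ and $\int_0^\delta u\to 0$; sending finally $Y\to+\infty$ uses $\calV(Y)\to 1$, $\calU(Y)\to\calU_{+\infty}$, $\calU'(Y)\to 0$, and the super-exponential decay of $\calV'$ from \eqref{eq:limitV} annihilates the tail integral. The remaining identity is $f_+(u_+)=f_+(\calU_{+\infty})$, and the symmetric procedure gives $f_-(u_-)=f_-(\calU_{-\infty})$.

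The Oleinik inequalities \eqref{eq:OleinikL1}--\eqref{eq:OleinikL2} are obtained by replaying this integration after multiplication by $\sgn(u^\eps-k)$. This turns the convective terms into the divergence of $Q^\eps_k:=\tfrac12(1-v^\eps)q_{-,k}(u^\eps)+\tfrac12(1+v^\eps)q_{+,k}(u^\eps)$, where $q_{\pm,k}(u):=\sgn(u-k)(f_\pm(u)-f_\pm(k))$, and converts the equality into a distributional inequality because $\sgn(u^\eps-k)(u^\eps)''\le(\eta_k(u^\eps))''$ holds with a non-negative defect supported at the crossings of the level $k$. The same three-stage limit now delivers $q_{+,k}(u_+)\le q_{+,k}(\calU_{+\infty})$ for every $k\in\RR$; combined with the already proved equality $f_+(u_+)=f_+(\calU_{+\infty})$ and the monotonicity \eqref{eq:monotL}, a case analysis on the signs of $u_+-k$ and $\calU_{+\infty}-k$ translates this single inequality into the pair \eqref{eq:OleinikL1}--\eqref{eq:OleinikL2}. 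Linearizing each of these inequalities at the corresponding endpoint then yields \eqref{eq:RLconsR}, strict under the GNL assumption on $f_+$. The left-hand statements \eqref{eq:OleinikR1}--\eqref{eq:OleinikR2} and \eqref{eq:RLconsL} are obtained symmetrically.

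Finally, for the non-trivial inner-profile case, the strict ordering $(u_R-u_L)(\calU_{+\infty}-\calU_{-\infty})>0$ follows from Theorem~\ref{thm:RelaxLayer}(\ref{thm:RL2}): since \eqref{eq:RLeqU} is a first-order linear ODE in $\calU'$, if $\calU'\not\equiv 0$ then $\calU'$ never vanishes, so $\calU$ is strictly monotone in the sign of $u_R-u_L$. The sign constraints \eqref{eq:nontrivial} are then extracted from Theorem~\ref{thm:RelaxLayer}(\ref{thm:RL3}): since $\calU(s)\to\calU_{+\infty}$ and $f_+$ admits only finitely many inflection points, $f_+'(\calU(s))$ has a constant sign for $s$ large, and the divergence $\int_0^{+\infty}f_+'(\calU(s))\,ds=-\infty$ forces $f_+'(\calU_{+\infty})\le 0$; the condition $f_-'(\calU_{-\infty})\ge 0$ is obtained likewise. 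The step I anticipate as most delicate is the justification of the Kruzkov-type distributional inequality at the inner scale, and in particular the uniform control of the rescaled tail integrals involving the cross-coupling $f_+-f_-$: these are ultimately tamed by the super-exponential decay of $\calV'$ together with the uniform $BV$ bound on $u^\eps$.
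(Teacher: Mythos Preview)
Your overall strategy coincides with the paper's: integrate the viscous equation (and its entropy companion) from an inner point $\eps Y$ to an outer point, pass to the limit $\eps\to 0$, then shrink the outer cut-off and finally send $Y\to+\infty$. The monotonicity step, the treatment of the cross-coupling integral via the decay of $\calV'$, and the extraction of \eqref{eq:nontrivial} from Theorem~\ref{thm:RelaxLayer}(\ref{thm:RL3}) are all in line with the paper. The paper packages the argument slightly differently---it first proves the single family of inequalities $q_+(u_+)\le q_+(\calU_{+\infty})$ for \emph{all} convex entropy pairs (Proposition~\ref{prop:convex} via Lemma~\ref{lemma:entropy}), and then recovers both the flux equality (taking $\eta(u)=\pm u$) and the Kru{\v z}kov/Oleinik conditions from that---but this is a matter of presentation, not substance.

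There is, however, a real gap in your control of the outer boundary term $\eps(u^\eps)'(\delta)$. You argue that it vanishes because $\delta$ can be chosen outside $\argmin h(\cdot;u)$, and you then need to send $\delta\to 0^+$ through such values. But when a rarefaction wave for $f_+$ is attached to the interface---that is, when $\lambda_+=0$, equivalently $f_+'(u_+)=0$---Proposition~\ref{prop:MR3} together with the formula \eqref{eq:hprime} shows that the whole interval $(0,\Lambda_+)$ lies in $\argmin h$, so no sequence $\delta_n\to 0^+$ outside $\argmin h$ exists. This configuration is not pathological: it is precisely the sonic situation singled out in Corollary~\ref{cor:MR3}(iii) and featuring prominently in Corollary~\ref{cor:convex}. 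The paper circumvents this by \emph{averaging} the integrated identity over $a\in(\delta,2\delta)$ before letting $\eps\to 0$; the offending boundary contribution then becomes $\tfrac{\eps}{\delta}\bigl(\eta(u^\eps)(2\delta)-\eta(u^\eps)(\delta)\bigr)$, which is bounded by $\tfrac{2\eps}{\delta}\sup|\eta|$ and tends to zero for every fixed $\delta>0$, minimizing or not. Replacing your pointwise evaluation at $\delta$ by this averaging device closes the gap and makes the rest of your argument go through.
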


Let us rephrase the above statement as follows. If the endpoint $\calU_{+\infty}$ differs from $u_+$, then a standing entropy shock wave for the right flux $f_+$ sticks along the interface. A similar situation occurs for non-matching values of $\calU_{-\infty}$ and $u_-$ but for the left flux $f_-$.
In addition, these two standing shocks may coexist as well with a non trivial inner layer $\calU$ connecting two distinct endpoints $\calU_{-\infty}$ and $\calU_{+\infty}$ but then with vanishing wave velocities $f'_-(\calU_{-\infty})=f'_+(\calU_{+\infty})=0$. Let us indeed give an example of such an exotic situation.
As asserted by Theorem~\ref{thm:matchCond}, this situation only takes place in the case of a pair of fluxes with no convexity property.
In that aim, we choose double well flux pairs as depicted in Figure~\ref{fig:doublewell}. Here $u^\star_{1-}<u^\star_{2-}$ (respectively $u^\star_{1+}<u^\star_{2+}$) denote the two sonic points of $f_-$ (respectively $f_+$) with the property that $f_+(u^\star_{1+})=f_+(u^\star_{2+})$ and $f_-(u^\star_{1-})=f_-(u^\star_{2-})$.

\begin{figure}[!ht]
\begin{center}
\begin{tikzpicture}[>=latex,scale=0.9]
 \draw[->] (-3,0) -- (3,0) node[below right] {$u$};
 \draw[->] (-3,0) -- (-3,4) node[above left] {$f(u)$};
 \begin{scope}
 \draw plot[domain=-0.15:2.15,smooth] (\x,{1.25+2*(\x-1)*(\x-1)*(\x-1)*(\x-1) - 0.8*(\x-1)*(\x-1)}) node[right] {$f_+$} ;
 \draw plot[domain=-2.5:0.5,smooth] (\x,{0.75+0.9*(\x+1)*(\x+1)*(\x+1)*(\x+1) - 1*(\x+1)*(\x+1)}) node[right] {$f_-$} ;
 \end{scope}
 \begin{scope}
 \draw (-1.7453,0.45) -- (-0.2546,0.45);
 \draw (0.5527,1.15) -- (1.4472,1.15);
 \end{scope}
 \begin{scope}[dotted]
 \draw (-0.2546,0.45) -- (-0.2546,0) node[below] {$u^\star_{2-}$};
 \draw (-1.7453,0.45) -- (-1.7453,0) node[below] {$u^\star_{1-}$};
 \draw (0.5527,1.15) -- (0.5527,0) node[below] {$u^\star_{1+}$};
 \draw (1.4472,1.15) -- (1.4472,0) node[below] {$u^\star_{2+}$};
 \end{scope}
\end{tikzpicture}%
\end{center}%
\caption{Standing shocks and non trivial inner solution}
\label{fig:doublewell}
\end{figure}
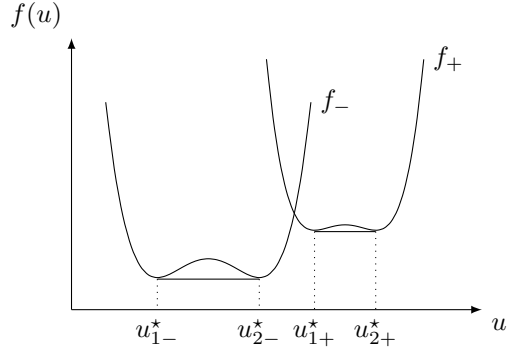
Let us prescribe the initial data as follows $u_L=u^\star_{1-}$ and $u_R=u^\star_{2+}$. The CRD solution is known to exist and monotonically increases from $u^\star_{1-}$ with $f'_-(u)>0$, for $u>u^\star_{1-}$ so that necessarily $u(\xi)=u^\star_{1-}$ for all $\xi<0$. Similar arguments allow to prove $u(\xi)=u^\star_{2+}$ for all $\xi>0$.  In other words, the coupled self-similar solution coincides with the initial data.

The left Riemann data $u_L$ being fixed, a continuation argument in the right state $u_R$  from $u^\star_2$ to $u^\star_1$ strongly supports that the coupled solution under consideration involves a non trivial inner layer in between  two standing shocks sticked at the interface. Without further analysis of the inner solution, it is impossible to discard other structures involving a single standing shock at the coupling interface. We refer the reader to the work by Boutin {\it et al.} \cite{Boutin:2010fv} where other situations may illustrate Theorem~\ref{thm:matchCond} in the case of general pair of fluxes.

The proof of the Oleinik inequalities \eqref{eq:OleinikL1} and \eqref{eq:OleinikL2} satisfied by the right trace $u_+$ of the outer solution and the exit point $\calU_{+\infty}$ of the relaxation layer (as well as the inequalities \eqref{eq:OleinikR1} and \eqref{eq:OleinikR2} satisfied by $u_-$ and $\calU_{-\infty}$) are a consequence of the following statement.

\begin{proposition}
\label{prop:convex}
Consider any entropy-entropy flux with the notations of Proposition~\ref{prop:MR2}. Then the following entropy-like inequalities are met:
\begin{gather}
 q_+(u_+)\leq q_+(\calU_{+\infty}),\label{interf-steadyCLR}\\
q_-(\calU_{-\infty})\leq q_-(u_-).\label{interf-steadyCLL}
\end{gather}
\end{proposition}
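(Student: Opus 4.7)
\emph{Plan.} The strategy is to derive~\eqref{interf-steadyCLR} by working at the $\eps$-level with an entropy form of the first equation of~\eqref{eq:Dafermos}, integrate over an interval $[\eps y,\xi_0]$ that straddles the interfacial layer, and pass to three successive limits $\eps\to 0$, $\xi_0\to 0^+$, $y\to +\infty$. The key mechanism is that a non-negative viscous dissipation is discarded by convexity of $\eta$, and it is precisely this discarded term that turns what would otherwise be an equality into the one-sided inequality claimed. The companion inequality~\eqref{interf-steadyCLL} follows by a symmetric argument carried out on $[\xi_0,\eps y]$ with $\xi_0<0$ and $y<0$.

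\emph{Step 1 (viscous entropy inequality).} Multiplying the first equation in~\eqref{eq:Dafermos} by $\eta'(u^\eps)$, using $q'_\pm=\eta'f'_\pm$ together with the identity $\eta'(u^\eps)(u^\eps)''=(\eta'(u^\eps)(u^\eps)')'-\eta''(u^\eps)((u^\eps)')^2$, and discarding the non-negative term $\eps\eta''(u^\eps)((u^\eps)')^2$, I obtain
\begin{equation*}
-\xi(\eta(u^\eps))'+\tfrac12\bigl[(1-v^\eps)(q_-(u^\eps))'+(1+v^\eps)(q_+(u^\eps))'\bigr]\leq \eps\bigl(\eta'(u^\eps)(u^\eps)'\bigr)'.
\end{equation*}

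\emph{Step 2 (integration and $\eps\to 0$).} I integrate this inequality from $\eps y$ (with $y>0$ fixed) to $\xi_0>0$, choosing $\xi_0$ in the dense set of continuity points of $u\in BV$, and perform integration by parts in each of the three LHS terms. The interior integrals involving $(v^\eps)'$ are converted, under $s=\xi/\eps$, into integrals of $\calV^\eps{}'(s)\,q_\pm(\calU^\eps(s))$ on $[y,\xi_0/\eps]$; thanks to the $\eps$-independent Gaussian majorant $\calV^\eps{}'(s)\leq C\,e^{-s^2/2}$ inherited from the explicit form of $v^\eps$ (cf.\ Theorem~\ref{thm:RelaxLayer}(\ref{thm:RL1})), dominated convergence delivers, in the limit $\eps\to 0$, the tail integrals $\tfrac12\int_y^{+\infty}\calV'(s)\bigl[q_-(\calU(s))-q_+(\calU(s))\bigr]\,ds$. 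The boundary terms at $\xi_0$ involve $u^\eps(\xi_0)\to u(\xi_0)$ and $v^\eps(\xi_0)\to 1$, while those at $\eps y$ involve $\calU^\eps(y)\to\calU(y)$ and $\calV^\eps(y)\to\calV(y)$. On the RHS, the scaling $(u^\eps)'(\eps y)=\calU^\eps{}'(y)/\eps$ converts $\eps\eta'(u^\eps(\eps y))(u^\eps)'(\eps y)$ into $\eta'(\calU^\eps(y))\calU^\eps{}'(y)\to\eta'(\calU(y))\calU'(y)$, while the contribution at $\xi_0$ vanishes because $\eps(u^\eps)'(\xi_0)\to 0$ at any continuity point of $u$ (where the limit sits locally in a constant state or a rarefaction, so that $(u^\eps)'(\xi_0)$ stays bounded as $\eps\to 0$).

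\emph{Step 3 (limits $\xi_0\to 0^+$ and $y\to+\infty$; main obstacle).} Sending $\xi_0\to 0^+$ along continuity points replaces $u(\xi_0)$ by the trace $u_+$ and annihilates the terms $\xi_0\eta(u(\xi_0))$ and $\int_0^{\xi_0}\eta(u)\,d\xi$. Then sending $y\to+\infty$, the properties $\calV(y)\to 1$ and $\calV'(y)\to 0$ exponentially, $\calU(y)\to\calU_{+\infty}$, and $\calU'(y)\to 0$ (Theorem~\ref{thm:RelaxLayer}) eliminate every remaining inner-scale quantity: $(1-\calV(y))q_-(\calU(y))\to 0$, $(1+\calV(y))q_+(\calU(y))\to 2q_+(\calU_{+\infty})$, the tail integral vanishes, and the RHS tends to $0$. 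The surviving inequality is exactly $q_+(u_+)\leq q_+(\calU_{+\infty})$. The main technical obstacle is to justify these limits in the matching-region integrals, whose inner-rescaled domain $[y,\xi_0/\eps]$ grows unboundedly as $\eps\to 0$: the uniform Gaussian majorant on $\calV^\eps{}'$ is exactly what makes dominated convergence applicable, and this is what allows the boundary/outer terms from Step~2 to disappear cleanly in the iterated limits of Step~3.
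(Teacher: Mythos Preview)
Your approach is essentially the same as the paper's: start from the entropy identity obtained by multiplying the first equation of~\eqref{eq:Dafermos} by $\eta'(u^\eps)$, discard the nonnegative dissipation $\eps\eta''(u^\eps)((u^\eps)')^2$, integrate from an inner-scale point $\eps y$ to an outer-scale point, and pass to successive limits. The paper packages the intermediate result as Lemma~\ref{lemma:entropy} (the inequality \eqref{lm:entropR} valid for every finite $y>0$) and then sends $y\to+\infty$, but the skeleton of the computation and the identification of all limiting terms (boundary terms at the inner endpoint, the Gaussian-weighted tail integral via dominated convergence, the collapse of the $-\xi(\eta(u^\eps))'$ contribution) match yours.

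There is one technical difference that matters. For the outer endpoint, the paper does \emph{not} fix a single point $\xi_0$; instead it integrates over $a\in(\delta,2\delta)$ and averages. The viscous boundary term then becomes $\tfrac{1}{\delta}\int_\delta^{2\delta}\eps(\eta(u^\eps))_\xi(a)\,da=\tfrac{\eps}{\delta}\bigl(\eta(u^\eps)(2\delta)-\eta(u^\eps)(\delta)\bigr)$, which tends to $0$ by the uniform sup-norm bound on $u^\eps$ alone---no information on $(u^\eps)'$ is needed. Your argument instead asserts that ``$(u^\eps)'(\xi_0)$ stays bounded as $\eps\to 0$'' at any continuity point of $u$. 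This is plausible (and can be extracted from the representation formula~\eqref{eq:repres} via a Laplace-type analysis), but it does \emph{not} follow from the $L^1_{\loc}$/BV convergence of $u^\eps$ that is actually available at this stage; even when the limit is a smooth rarefaction, pointwise control of $(u^\eps)'$ requires an additional argument. This is a genuine soft spot in your write-up. The averaging trick is the clean fix and costs one line; alternatively you could justify your claim directly from~\eqref{eq:repres} and Lemma~\ref{lm:hcvg}, but that is more work than it looks.

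A second, smaller point: you use $\calU^\eps{}'(y)\to\calU'(y)$ to pass the inner viscous term to the limit. This is also not immediate from the $L^1_{\loc}$ convergence in Theorem~\ref{thm:RelaxLayer}(\ref{thm:RL2}); it can be recovered by bootstrapping compactness for $\calU^\eps{}'$ from the rescaled ODE (on compacts, $|\calU^\eps{}''|\leq C|\calU^\eps{}'|$), but you should say so. The paper's proof of Lemma~\ref{lemma:entropy} takes this step for granted as well, so you are in good company, but it deserves a sentence.
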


The matching conditions $f_+(u_+)=f_-(u_-)$ expressed in Theorem~\ref{thm:matchCond} directly follows from the inequality \eqref{interf-steadyCLR}, after choosing $\eta(u)=u$ and $\eta(u)=-u$. Then it is fairly well known that the inequalities \eqref{interf-steadyCLR} set for any smooth convex entropy $\eta$ is equivalent to verify all the Kru{\v z}kov entropy inequalities:
\[
\sgn(u_+-k)(f_+(u_+)-f_+(k)) \leq \sgn(\calU_{+\infty}-k)(f_+(\calU_{+\infty})-f_+(k))
\]
for all $k$ in $\RR$ (see for instance the monograph by Godlewski and Raviart~\cite{Godlewski:1991cr}). The analysis proposed by Oleinik then applies to derive the conditions \eqref{eq:OleinikR1} and \eqref{eq:OleinikR2} (see again~\cite{Godlewski:1991cr} for the details). The corresponding matching condition for $u_-$ and $\calU_{-\infty}$ are inferred following similar steps.

The proof of Proposition~\ref{prop:convex} is based on the following two technical Lemmas:

\begin{lemma}
\label{lemma:flux}
Under the assumptions of Theorem~\ref{thm:RelaxLayer}, $\calU$ satisfies the following flux relation on each half-line.
For $y>0$,
\begin{multline}
\label{lm:fluxR}
f_+(u_+) = \frac12 \Big\{(1-\calV) f_-(\calU) + (1+\calV) f_+(\calU)\Big\}(y)\\ + \frac12 \int_y^{+\infty}\big(f_+(\calU(s))-f_-(\calU(s))\big)\calV(s)\, ds
 - \calU'(y).
\end{multline}
Moreover for $y<0$ one has 
\begin{multline}
\label{lm:fluxL}
f_-(u_-) = \frac12 \Big\{(1-\calV) f_-(\calU) + (1+\calV) f_+(\calU)\Big\}(y)
\\ - \frac12 \int_{-\infty}^y\big(f_+(\calU(s))-f_-(\calU(s))\big)\calV(s)\, ds
 - \calU'(y).
 \end{multline}
\end{lemma}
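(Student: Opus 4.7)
The plan is to integrate the viscous-profile ODE~\eqref{eq:RLeqU} exactly once, read in ``conservation form''. Introducing the shorthand $\mathcal{F}(y):=\tfrac12\bigl((1-\calV)f_-(\calU)+(1+\calV)f_+(\calU)\bigr)(y)$, the chain rule together with~\eqref{eq:RLeqU} yields at once
\[
\frac{d}{dy}\bigl(\mathcal{F}-\calU'\bigr)=\tfrac12\,\calV'\,(f_+(\calU)-f_-(\calU)),
\]
which is the primitive identity behind the statement.

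I would next integrate this identity from the fixed $y>0$ out to $+\infty$. By point~\eqref{thm:RL1} of Theorem~\ref{thm:RelaxLayer} the factor $\calV'(s)$ has Gaussian decay, so the integral converges absolutely. By point~\eqref{thm:RL2}, $\calU$ is bounded and monotone, and the ODE $\calU''=a(\calU,\calV)\,\calU'$ with bounded coefficient gives $\calU'\in W^{1,1}(\RR)$ and therefore $\calU'(s)\to 0$ and $\calU(s)\to\calU_{+\infty}$ as $s\to+\infty$; since $\calV(s)\to 1$, one also has $\mathcal{F}(s)\to f_+(\calU_{+\infty})$. These limits produce the identity~\eqref{lm:fluxR} up to identifying the constant at $+\infty$ with $f_+(u_+)$.

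To pin down this matching, I would repeat the integration at the $\eps$-level, where the outer data $\calU^\eps(M/\eps)=u_R$ appear explicitly. The same algebra gives
\[
\bigl(\mathcal{F}^\eps-\calU^\eps{}'\bigr)'=\eps y\,\calU^\eps{}'-\tfrac12\,\calV^\eps{}'\bigl(f_+(\calU^\eps)-f_-(\calU^\eps)\bigr),
\]
and integrating from $y$ to $M/\eps$ picks up the boundary value $f_+(u_R)$. The parasitic $\eps$-term transforms, under $\xi=\eps s$, into $\int_{\eps y}^M\xi\,du^\eps(\xi)$; by the outer conservation law~\eqref{eq:ESright} read as $d f_+(u)=\xi\,du$ on $\{\xi>0\}$ together with the convergence $u^\eps\to u$ in $L^1_{\rm loc}$ and the uniform $BV$ bound from Proposition~\ref{prop:MR2}, this limit equals $f_+(u_R)-f_+(u_+)$. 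Combined with dominated convergence on the Gaussian integral (whose tail is controlled uniformly in $\eps$ thanks to~\eqref{eq:LinfestimV}), this yields simultaneously the matching $f_+(\calU_{+\infty})=f_+(u_+)$ and the claimed identity~\eqref{lm:fluxR}.

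The main obstacle is this last convergence $\int_{\eps y}^M\xi\,du^\eps(\xi)\to f_+(u_R)-f_+(u_+)$: the lower bound slides down to the very interface $\xi=0^+$ where the trace $u_+$ must be resolved. I would handle it by splitting $[\eps y,M]=[\eps y,\delta]\cup[\delta,M]$ for a fixed small $\delta>0$, treating the outer piece directly with~\eqref{eq:ESright}, controlling the remainder on $[\eps y,\delta]$ via the uniform $BV$ bound on $u^\eps$, and sending $\delta\to 0^+$ using the existence of the trace $u_+$. The case $y<0$ is strictly symmetric: integrate from $-M/\eps$ up to $y$, use~\eqref{eq:ESleft} together with the left data $u_L$, and recover $f_-(u_-)$ from $\int_{-M}^{0^-}\xi\,du=f_-(u_L)-f_-(u_-)$ in the limit.
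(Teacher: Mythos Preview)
Your approach is correct in spirit but differs from the paper's in an instructive way. The paper does not prove Lemma~\ref{lemma:flux} directly: it first establishes the entropy inequalities of Lemma~\ref{lemma:entropy} and then obtains the flux identities by specializing to $\eta(u)=\pm u$. The crucial technical device in the paper's proof of Lemma~\ref{lemma:entropy} is that the $\eps$-level identity is integrated in the slow variable from $\eps y$ to a \emph{fixed} $a>0$, then \emph{averaged} over $a\in(\delta,2\delta)$ before sending $\eps\to 0$ and finally $\delta\to 0$. This averaging kills the boundary viscous term $\eps(\eta(u^\eps))_\xi(a)$ trivially (it becomes $\tfrac{\eps}{\delta}\bigl(\eta(u^\eps)(2\delta)-\eta(u^\eps)(\delta)\bigr)$, uniformly $O(\eps)$), and it resolves the trace $u_+$ as a Lebesgue-type average limit $\tfrac{1}{\delta}\int_\delta^{2\delta}q_+(u(a))\,da\to q_+(u_+)$.

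Your route --- integrate the conservation-form identity in the fast variable all the way from $y$ to $M/\eps$, and identify the parasitic term $\int_{\eps y}^{M}\xi\,du^\eps$ with $f_+(u_R)-f_+(u_+)$ via the outer equation \eqref{eq:ESright} --- is more direct and has the nice feature of producing the matching $f_+(\calU_{+\infty})=f_+(u_+)$ as a byproduct. Your splitting $[\eps y,M]=[\eps y,\delta]\cup[\delta,M]$ plays the same role as the paper's averaging in extracting the trace. However, you have left one term unaccounted for: the boundary contribution $\calU^\eps{}'(M/\eps)=\eps\,u^\eps_\xi(M)$. Nothing in the Dirichlet data \eqref{eq:Boundcond} forces this derivative to be small, and without the averaging trick you need an independent argument that $\eps\,u^\eps_\xi(M)\to 0$. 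This is true --- it follows from the representation formula \eqref{eq:repres} and a Laplace-type estimate, since $h^\eps(M;u^\eps)\to h(M;u)>\min h$ strictly while the normalizing denominator concentrates near $\min h$ --- but it is not free, and you should say so. The same remark applies to the pointwise passage $\calU^\eps{}'(y)\to\calU'(y)$; this is where the paper's argument is also terse, but at least the averaging removes the need to control endpoint derivatives.
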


\begin{lemma}
\label{lemma:entropy}
Under the assumptions of Theorem~\ref{thm:RelaxLayer}, $\eta\in\calC^1(\RR)$ being a strictly convex entropy function associated to the entropy fluxes $q_-$ (respectively $q_+$) for the flux $f_-$ (resp. $f_-$), namely $q_+'(u) = \eta'(u) f_+'(u)$ (resp. $q_-'(u) = \eta'(u) f_-'(u)$), then $\calU$ satisfies the following entropy inequalities on each half-line.
For $y>0$,
\begin{multline}
\label{lm:entropR}
q_+(u_+) \leq \frac12 \Big\{(1-\calV) q_-(\calU) + (1+\calV) q_+(\calU)\Big\}(y)\\ + \frac12 \int_y^{+\infty}\big(q_+(\calU(s))-q_-(\calU(s))\big)\calV(s)\, ds
 - d_y(\eta(\calU))(y).
\end{multline}
Moreover for $y<0$,
\begin{multline}
\label{lm:entropL}
\frac12 \Big\{(1-\calV) q_-(\calU) + (1+\calV) q_+(\calU)\Big\}(y) - \frac12 \int_{-\infty}^y\big(q_+(\calU(s))-q_-(\calU(s))\big)\calV(s)\, ds\\
 - d_y(\eta(\calU))(y) \leq  q_-(u_-).
\end{multline}

\end{lemma}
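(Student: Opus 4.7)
The strategy parallels that of Lemma~\ref{lemma:flux}, with the strict convexity of $\eta$ replacing the identity obtained there by a one-sided inequality. I treat the case $y > 0$ in detail; the case $y < 0$ is handled by the mirror argument, integrating from $-\infty$ to $y$ and exploiting the matching at $\xi = 0^-$.

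The first move is to multiply the inner profile ODE \eqref{eq:RLeqU} through by $\eta'(\calU)$. Using the chain rule together with the companion relations $q_\pm' = \eta' f_\pm'$, the left-hand side takes the compact form $\tfrac12\{(1-\calV)(q_-(\calU))_y + (1+\calV)(q_+(\calU))_y\}$. For the right-hand side, the standard convexity identity $\eta'(\calU)\calU'' = (\eta(\calU))_{yy} - \eta''(\calU)(\calU')^2$, combined with $\eta''(\calU) \geq 0$, yields the pointwise differential inequality $\eta'(\calU)\calU'' \leq (\eta(\calU))_{yy}$.

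Next, I would recast the left-hand side in conservative form via the product rule $(1\pm\calV)(q_\pm(\calU))_y = ((1\pm\calV)q_\pm(\calU))_y \mp \calV' q_\pm(\calU)$, and then integrate the resulting differential inequality from $y > 0$ to $+\infty$. The asymptotic data gathered in Theorem~\ref{thm:RelaxLayer}, namely $\calV \to 1$, $\calU \to \calU_{+\infty}$, $\calU' \to 0$ at $+\infty$, together with the Gaussian decay of $1 - \calV$ ensuring convergence of the tail integrals, evaluate all boundary contributions at $+\infty$ and produce, after rearrangement, an inequality of the form
\[
q_+(\calU_{+\infty}) \leq \tfrac12\{(1-\calV)q_-(\calU)+(1+\calV)q_+(\calU)\}(y) + \tfrac12\int_y^{+\infty}(q_+(\calU(s))-q_-(\calU(s)))\,\calV'(s)\,ds - d_y(\eta(\calU))(y),
\]
which is exactly \eqref{lm:entropR} modulo replacing the inner endpoint $\calU_{+\infty}$ by the outer trace $u_+$.

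That replacement is the final step. Here Lemma~\ref{lemma:flux} is indispensable: letting $y \to +\infty$ in its identity gives the Rankine--Hugoniot relation $f_+(u_+) = f_+(\calU_{+\infty})$, so that $u_+$ and $\calU_{+\infty}$ are the endpoints of a stationary jump for $f_+$. Combining this with the Oleinik admissibility of such a standing discontinuity, inherited at the outer level from the entropy inequality \eqref{eq:ESright} of Proposition~\ref{prop:MR2} at $\xi = 0^+$, yields $q_+(u_+) \leq q_+(\calU_{+\infty})$, and \eqref{lm:entropR} follows. I expect this last matching step to be the main obstacle, since the outer trace $u_+$ and the inner endpoint $\calU_{+\infty}$ need not coincide and the entropy ordering of their images must be carefully extracted from the standing-shock admissibility; by contrast, the differential-inequality manipulation itself is routine once the convex analogue of Lemma~\ref{lemma:flux} is set up.
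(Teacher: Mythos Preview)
Your argument has a genuine circularity. The inequality $q_+(u_+)\leq q_+(\calU_{+\infty})$ that you invoke at the last step is precisely the content of Proposition~\ref{prop:convex} (equation~\eqref{interf-steadyCLR}), which in the paper is \emph{derived from} Lemma~\ref{lemma:entropy}, not the other way around. Indeed, the paper's logical order is: Lemma~\ref{lemma:entropy} $\Rightarrow$ Lemma~\ref{lemma:flux} (take $\eta(u)=\pm u$) $\Rightarrow$ Proposition~\ref{prop:convex} $\Rightarrow$ the Oleinik inequalities of Theorem~\ref{thm:matchCond}. So you cannot appeal to any of these downstream results here. Your alternative justification---that the entropy ordering of $u_+$ and $\calU_{+\infty}$ follows from \eqref{eq:ESright} in Proposition~\ref{prop:MR2}---does not work either: those inequalities are satisfied by the \emph{outer} solution $u$ in $\{\xi>0\}$, which connects $u_+$ to $u_R$; they say nothing about the inner endpoint $\calU_{+\infty}$, which is invisible at the outer level.

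The paper circumvents this by working at the $\eps$ level rather than with the limiting inner ODE. One multiplies the first equation in \eqref{eq:Dafermos} by $\eta'(u^\eps)$ and integrates in the \emph{original} variable $\xi$ over the interval $(\eps y,a)$, with $a>0$ fixed. After rescaling the inner endpoint to the fast variable $y$, the boundary contribution at $\xi=a$ is $\tfrac12\{(1-v^\eps)q_-(u^\eps)+(1+v^\eps)q_+(u^\eps)\}(a)$. Averaging over $a\in(\delta,2\delta)$, letting $\eps\to 0$ (so $v^\eps(a)\to 1$ and $u^\eps\to u$), and then $\delta\to 0$, this term converges directly to $q_+(u_+)$ because $u$ has a right trace at $\xi=0$. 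In other words, the outer trace $u_+$ enters the inequality \emph{through the $\eps$-level computation}, not through any a posteriori matching of $\calU_{+\infty}$ with $u_+$. Your purely limit-level argument, integrating only the ODE \eqref{eq:RLeqU}, can at best produce $q_+(\calU_{+\infty})$ on the left, and there is no independent way to upgrade this to $q_+(u_+)$.
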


\begin{proof}[Proof of Lemma~\ref{lemma:flux}]
The identities \eqref{lm:fluxR} and \eqref{lm:fluxL} directly come from the corresponding entropy inequalities stated in Lemma~\ref{lemma:entropy}, \eqref{lm:entropR} and \eqref{lm:entropL} respectively, by choosing $\eta(u) = u$ and $\eta(u)=-u$. Therefore we now intend to prove only Lemma~\ref{lemma:entropy}.
\end{proof}

\begin{proof}[Proof of Lemma~\ref{lemma:entropy}]
Let $y$ be fixed and $\eps>0$ be fixed, small enough such that $y\in(0,L/\eps)$. Then let $a\in\RR_+$ be any given positive real number satisfying
\begin{equation}
\label{hyp:ordering}
0<\eps y \leq a.
\end{equation}
Let us start from the following identity:
\begin{align*}
\eps (\eta(u^\eps))_{\xi\xi} &= \eps \eta'(u^\eps) u^\eps_{\xi\xi} + \eps \eta''(u^\eps) (u^\eps_\xi)^2\\
&= -\xi \eta(u^\eps)_\xi + \tfrac12( (1-v^\eps) q_-(u^\eps)_y + (1-v^\eps) q_+(u^\eps)_y) + \eps \eta''(u^\eps) (u^\eps_\xi)^2,
\end{align*}
where we have plugged the first governing equation in \eqref{eq:Dafermos} and used the definition of the entropy fluxes $q'_\pm(u) = \eta'(u) f'_\pm(u)$. Let us integrate this formula for $\xi\in(\eps y,a)$ to get, once multiplied by~$-1$:
\begin{align*}
&\eps \eta(u^\eps)_\xi(\eps y) - \eps \eta(u^\eps)_\xi(a)\\
	&\hspace{1em}- \tfrac12\{(1-v^\eps)q_-(u^\eps) + (1+v^\eps)q_+(u^\eps)\}(\eps y)
	+ \tfrac12\{(1-v^\eps)q_-(u^\eps) + (1+v^\eps)q_+(u^\eps)\}(a)\\
	&\hspace{1em}- \frac12 \int_{\eps y}^a \{q_+(u^\eps)-q_-(u^\eps)\}(\xi) v^\eps_\xi(\xi)\, d\xi - \int_{\eps y} ^a \xi (\eta(u^\eps))_\xi(\xi)\, d\xi\\
	&= -\eps\int_{\eps y}^a \eta''(u^\eps) (u^\eps_\xi)^2 d\xi \leq 0,
\end{align*}
since all the $a$ under consideration satisfy \eqref{hyp:ordering}. The proposed inequality immediately recasts in the terms of the rescaled profile $\calU^\eps$ and $\calV^\eps$ as follows:
\begin{align*}
&\eta(\calU^\eps)_y(y) - \eps \eta(u^\eps)_\xi(a)\\
	&\hspace{1em}- \tfrac12\{(1-\calV^\eps)q_-(\calU^\eps) + (1+\calV^\eps)q_+(\calU^\eps)\}(y)
	+ \tfrac12\{(1-v^\eps)q_-(u^\eps) + (1+v^\eps)q_+(u^\eps)\}(a)\\
	&\hspace{1em}- \frac12 \int_{y}^{a/\eps} \{q_+(\calU^\eps)-q_-(\calU^\eps)\}(s) \calV^\eps{}'(s)\, ds - \int_{\eps y} ^a \xi (\eta(u^\eps))_\xi(\xi)\, d\xi \leq 0.
\end{align*}
{Let  $\delta > \eps y$ be given} and average the above inequality for $a\in (\delta, 2\delta)$
\begin{align*}
& \eta(\calU^\eps)_y (y) - \tfrac12\{(1-\calV^\eps)q_-(\calU^\eps) + (1+\calV^\eps)q_+(\calU^\eps)\}(y)\\
	&\hspace{1em} - \frac 1\delta \int_{\delta}^{2\delta} \eps\eta(u^\eps)_\xi(a)\, da + \frac 1{2\delta}\int_{\delta}^{2\delta}\{(1-v^\eps)q_-(u^\eps) + (1+v^\eps)q_+(u^\eps)\}(a)\, da\\
	&\hspace{1em}- \frac 1{2\delta} \int_\delta^{2\delta}\int_{y}^{a/\eps} \{q_+(\calU^\eps)-q_-(\calU^\eps)\}(s) \calV^\eps{}'(s)\, ds\, da\\
	&\hspace{1em}- \frac 1{2\delta}\int_\delta^{2\delta}\int_{\eps y} ^a \xi (\eta(u^\eps))_\xi(\xi)\, d\xi\, da \leq 0,
\end{align*}
which we rewrite with clear notations as:
\begin{multline*}
 \eta(\calU^\eps)_y (y) - \tfrac12\{(1-\calV^\eps)q_-(\calU^\eps) + (1+\calV^\eps)q_+(\calU^\eps)\}(y)
- A_1^{\eps,\delta} + A_2^{\eps,\delta} -  A_3^{\eps,\delta} -  A_4^{\eps,\delta} \leq 0.
\end{multline*}
We show hereafter how to handle $A_i^{\eps,\delta}$ first in the limit $\eps\to 0$, $\delta$ being fixed and then in the limit $\delta\to 0$. We now propose to show that
\begin{align}
&\lim_{\eps\to 0} A_1^{\eps,\delta}=0,\label{limitA1}\\
&\lim_{\delta\to 0}\lim_{\eps\to 0} A_2^{\eps,\delta}=q_+(u_+),\label{limitA2}\\
&\lim_{\delta\to 0}\lim_{\eps\to 0} A_4^{\eps,\delta}=0,\label{limitA4}\\
&\lim_{\eps\to 0} A_3^{\eps,\delta}= \int_y^{+\infty}(q_+(\calU)-q_-(\calU))\calV_y\, ds.\label{limitA3}
\end{align}

\noindent$\bullet$
Let us first consider
\begin{equation*}
A_1^{\eps,\delta} = \frac 1\delta \int_\delta^{2\delta} \eps \eta(u^\eps)_\xi (a) \, da = \frac \eps\delta (\eta(u^\eps)(2\delta) - \eta(u^\eps)(\delta)).
\end{equation*}
Thanks to the uniform sup norm estimate for $u^\eps$, we propose the rough estimate:
\begin{equation*}
|A_1^{\eps,\delta}|\leq \frac {2\eps}\delta \max_{|u|\leq \|u_0\|_{L^\infty}}|\eta(u)|,
\end{equation*}
which yields \eqref{limitA1}, the small parameter $\delta$ being fixed.

\noindent$\bullet$ 
 The convergence property stated in \cite{Boutin:2011la} immediately gives:
\begin{align*}
\lim_{\eps\to 0^+} A_2^{\eps,\delta} = \frac 1{2\delta} \int_\delta^{2\delta} \{(1-v)q_-(u) + (1+v)q_+(u)\}(a)\, da= \frac 1\delta \int_\delta^{2\delta} q_+(u)(a)\, da,
\end{align*}
since for any given fixed $a>0$, $v(a)$ boils down to $+1$. The total variation of $u$ being bounded, $u$ admits left and right traces everywhere and we thus have \eqref{limitA2}.

\noindent$\bullet$ 
Let us now handle the limit in $A_4^{\eps,\delta}$ considering the following identity:
\begin{equation*}
\int_\delta^{2\delta} \xi (\eta(u^\eps))_\xi \,d\xi = a \eta(u^\eps) (a) - \eps y \eta(u^\eps) (\eps y) - \int_{\eps y}^a \eta(u^\eps) (\xi)\,d\xi,
\end{equation*}
so that
\begin{align*}
A_4^{\eps,\delta} &= \frac1\delta \int_\delta^{2\delta} \int_{\eps y}^a \xi (\eta(u^\eps))_\xi\, d\xi\, da\\
& = \frac 1\delta \int_\delta^{2\delta} a \eta(u^\eps(a))\, da - \eps y \eta(u^\eps(\eps y)) - \frac 1\delta \int_\delta^{2\delta} \int_{\eps y}^a \eta(u^\eps (\xi))\, d\xi\, da,
\end{align*}
so that direct calculations give the following crude upper bound:
\begin{equation*}
|A_4^{\eps, \delta}| \leq \max_{|u|\leq \|u_0\|_{L^\infty}} |\eta(u)| \Big\{\dfrac{3\delta}{2} + \eps y +  (\dfrac{3\delta}{2} -\eps y)\Big\} \leq 3\delta \max_{|u|\leq \|u_0\|_{L^\infty}} |\eta(u)|.
\end{equation*}
We thus have \eqref{limitA4}.

\noindent$\bullet$ 
Let us now evaluate $A_3^{\eps,\delta}$ in the limit $\eps\to 0$. {Let us consider} the following decomposition:
\begin{multline}
\label{decomp3}
\int_y^{a/\eps} (q_+(\calU^\eps) - q_-(\calU^\eps)) \calV^\eps_y\,ds = 
\int_y^{a/\eps} (q_+(\calU) - q_-(\calU)) \calV_y\,ds\\
+ \int_y^{a/\eps} (q_+(\calU) - q_-(\calU)) (\calV^\eps_y-\calV)\,ds\\ 
+ \int_y^{a/\eps} \Big\{(q_+(\calU^\eps) - q_-(\calU^\eps)) - (q_+(\calU) - q_-(\calU))\Big\} \calV^\eps_y\,ds.
\end{multline}
The limit profile $\calU$ being bounded, we observe that for some constant $C>0$ independent of $\eps$
\begin{equation*}
|(q_+(\calU)-q_-(\calU))(s)\calV_y(s)| \leq C e^{-s^2/2},
\end{equation*}
so that the first term in \eqref{decomp3} is finite and reads in the limit:
\begin{equation*}
\lim_{\eps\to 0} \int_y^{a/\eps} (q_+(\calU) - q_-(\calU)) \calV_y\,ds = \int_y^{+\infty} (q_+(\calU) - q_-(\calU)) \calV_y\,ds.
\end{equation*}
Then we have, concerning the second term in \eqref{decomp3}:
\begin{align*}
\left|\int_y^{a/\eps} (q_+(\calU) - q_-(\calU)) (\calV^\eps_y-\calV)\,ds\right|& \leq \max_{|u|\leq \|u_0\|_{L^\infty}}|(q_+-q_-)(u)|\ \|\calV^\eps_y-\calV_y\|_{L^\infty(y,+\infty)}\\& \leq O(\eps),
\end{align*}
since direct calculations ensure the pointwise estimate
\begin{equation*}
|\calV^\eps_y(s)-\calV_y(s)| \leq C e^{-L^2/2\eps}e^{-s^2/2},\quad s\in\RR.
\end{equation*}
At last and for any given $Y>y$, the last integral term in \eqref{decomp3} can be conveniently estimated according to:
\begin{align}
&\left|\int_y^{a/\eps} \Big\{(q_+-q_-)(\calU^\eps)-(q_+-q_-)(\calU)\Big\} \calV^\eps_y\,ds\right|\notag\\
&\leq \int_y^{\min(a/\eps,Y)}\!\! |(q_+-q_-)(\calU^\eps)-(q_+-q_-)(\calU)|\,ds + \max_{|u|\leq \|u_0\|_{L^\infty}}\!\! |q_+-q_-|(u)\ \int_{\min(a/\eps,Y)}^{a/\eps} |\calV^\eps_y|\, ds,\label{termlimit3}
\end{align}
since $|\calV^\eps_y(s)|\leq 1$. Observe then that the first integral concerns a {compact interval $[y,\min(a/\eps,Y)]$} for all $\eps>0$ and $a\geq \eps y$ so that the $L^1_{\rm loc}$ convergence of $\calU^\eps$ towards $\calU$ ensures
\begin{equation*}
\lim_{\eps\to 0} \int_y^{\min(a/\eps,Y)} |(q_+-q_-)(\calU^\eps)-(q_+-q_-)(\calU)|\,ds = 0.
\end{equation*}
Then observe that the last integral in the right hand side of \eqref{termlimit3} satisfies the following crude estimate (at least for $\eps$ sufficiently small)
\begin{equation*}
\int_{\min(a/\eps,Y)}^{a/\eps} |\calV^\eps_y(s)|\, ds \leq \int_{Y}^{+\infty} |\calV^\eps_y(s)|\, ds = 1 - \calV(Y) \leq C e^{-Y^2/2},
\end{equation*}
so that for all $Y\in\RR,$ $Y>y$,
\begin{equation*}
\lim_{\eps \to 0} \int_{\min(a/\eps,Y)}^{a/\eps} |\calV^\eps_y(s)|\, ds \leq C e^{-\tfrac12 Y^2}.
\end{equation*}
As  a consequence,
\begin{equation*}
\lim_{\eps \to 0} \left| \dfrac{1}{\delta}\int_\delta^{2\delta}\int_{\min(a/\eps,Y)}^{a/\eps} |\calV^\eps_y(s)|\, ds\, da \right| \leq C e^{-\tfrac12 Y^2},
\end{equation*}
for all $Y\in\RR$, the best estimate being obtained when sending $Y$ to $+\infty$. As a conclusion we get the limit \eqref{limitA3} and the main result follows.
\end{proof}

Equipped with the entropy inequalities \eqref{lm:entropR} and \eqref{lm:entropL} satisfied along the layer profile, let us infer Proposition~\ref{prop:convex}.

\begin{proof}[Proof of Proposition~\ref{prop:convex}]
Observe first that in the case of a trivial relaxation layer, i.e. verifying $\calU(y) = \calU_{-\infty} = \calU_{+\infty}$, for all $y$ in $\RR$, the inequality \eqref{lm:entropR} reads:

\begin{multline}
q_+(u_+) \leq \frac12\big [(1-\calV(y))q_-(\calU_{+\infty})+(1+\calV(y))q_+(\calU_{+\infty})\big]\\
+\frac12 \big(q_+(\calU_{+\infty})-q_-(\calU_{+\infty})\big)\int_y^{+\infty}\calV(s)\,ds,
\end{multline}
for all $y>0$. One can check that $\int_y^{+\infty} \calV(s)\, ds$ tends to zero as $y$ goes to infinity.
Sending thus $y$ to $+\infty$ yields the required result \eqref{interf-steadyCLR}. The companion inequality \eqref{interf-steadyCLL} follows from \eqref{lm:entropL} with straighforward modifications.\\
Let us now consider the case of a non trivial relaxation layer, i.e. with $\calU_y(y)\neq 0$ for all finite $y$ in $\RR$. Let us recall from Theorem~\ref{thm:RelaxLayer} that the asymptotic conditions \eqref{eq:condasymptR}--\eqref{eq:condasymptL} must be met expressing the property that $\calU_y(y)$ vanishes as $|y|$ goes to infinity. Passing to the limit $y\to +\infty$ in the inequality \eqref{lm:entropR} (respectively \eqref{lm:entropL}) then gives the expected result \eqref{interf-steadyCLR} (resp. \eqref{interf-steadyCLL}) in view of the properties of the profiles $\calU(y)$ and $\calV(y)$ stated in Theorem~\ref{thm:RelaxLayer}.
\end{proof}

Let us again stress that the asymptotic properties \eqref{eq:condasymptR} and \eqref{eq:condasymptL} are also responsible for the validity of the inequalities \eqref{interf-steadyCLR}--\eqref{interf-steadyCLL} in the case of a non trivial relaxation layer.

To conclude this section, it suffices to prove the matching properties \eqref{eq:monotL} and \eqref{eq:monotR}.

\begin{proof}[Proof of Theorem~\ref{thm:matchCond}]
The monotonicity property of the smooth solution $u^\eps$, $\eps>0$ being fixed, reads also $(u_R-u_L) u^\eps_\xi \geq 0$, which we integrate for $\xi \in (\eps y, a)$ with fixed $y\geq 0$ and $a>0$ with $\eps y <a$ to get
$(u_R-u_L)(u^\eps(a)-u^\eps(\eps y))\geq 0$, that is $(u_R-u_L)(u^\eps(a)-\calU^\eps(y))\geq 0$. 
{Let $\delta >\eps y$ be given}, the previous inequality once integrated for $a\in (\delta, 2\delta)$ yields
$(u_R-u_L)(\frac 1\delta \int_\delta^{2\delta} u^\eps(a) \, da -\calU^\eps (y)) \geq 0$, sending $\eps$ to zero gives
$(u_R-u_L)(u_+-\calU(y))\geq 0$, for all $y>0$. Passing to the limit $y\to+\infty$ gives the required result \eqref{eq:monotR}.\\
Similar steps apply to get the companion inequality \eqref{eq:monotL}.
\end{proof}


\section{The coupling of genuinely nonlinear fluxes}
\label{sec:GNL}


\subsection{Overview of the results}

As put forward in the previous section (see for example Fig.~\ref{fig:doublewell}), the existence of several sonic points is clearly responsible for non trivial and exotic self-similar solutions. They make tedious the characterization of all possible CRD solutions. 
From now on, we shall restrict our attention to flux functions having a single sonic point and we will prove that the ordering of these sonic points plays a central role in the structure and in the multiplicity of CRD solutions. For definiteness, we consider flux functions $f_-$ and $f_+$ that are strictly convex. The sonic point of $f_-$ (respectively $f_+$) will be denoted $u_-^\star$ (resp. $u_+^\star$) with $u_-^\star \geq -\infty$ and finite as soon as $\lim_{u\to-\infty} f_-(u) = +\infty$.

It is worth to briefly restate the result of Section~\ref{sec:outer} in the case of a pair of strictly convex fluxes. In the half lines $\{\xi<0\}$ and $\{\xi>0\}$, $u$ can be made of at most a single wave, namely a rarefaction or an entropy shock. Due to the monotonicity of the solution, if two single waves coexist, then they are necessarily of the same type, {i.e.} both simultaneously shock waves or both simultaneously rarefaction waves. In any situation, an extra standing discontinuity at $\{\xi=0\}$ may be involved. More precisely, we prove the following result.

\begin{corollary}[Convex setting]
\label{cor:convex}
Let $f_-$ and $ f_+$ be two convex flux functions and {$(u_L,u_R)$ be Riemann data.} 

\begin{itemize}

\item Assume $u_L<u_R$: No standing shock for either the left flux $f_-$ or the right flux $f_+$ can stick on the interface. If a non trivial inner profile exists, the following matching conditions are in order:
\begin{equation}
\calU_{-\infty} = u_- = u^\star_- < \calU_{+\infty} = u_+ = u^\star_+.
\end{equation}

\item Assume $u_L>u_R$: The outer solution may be discontinuous at the interface with $u_- > u_+$. Its interplay with the inner solution comes as follows.
\smallskip
\begin{itemize}
\item A standing entropy satisfying shock either for the left flux $f_-$ or for the right flux $f_+$ may exist but it cannot coexist with a non trivial inner profile: namely $\calU_{-\infty} = \calU_{+\infty} =: \calU_{\flat}$ with either $\calU_{\flat} > u_+$ and/or $u_- > \calU_{\flat}$. In the first case, necessarily $u^\star_+ > u_R$ while in the second case $u_L > u^\star_-$.
\smallskip
\item Right and left standing shocks may coexist but under the condition $u^\star_- > u^\star_+$ with the following data ordering $u_L > u^\star_- > u^\star_+ > u_R$. In this situation the outer solution coincides with the initial data.
\smallskip
\item The existence of a non trivial inner profile can only arise with $\calU_{-\infty} = u_L > \calU_{+\infty} = u_R$ under the condition $u_L \geq u_- \geq u^\star_-$ and $u^\star_+ > u_+ \geq u_R$. The outer solution coincides with the initial data.
\end{itemize}

\end{itemize}
\end{corollary}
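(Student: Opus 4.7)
The proof is a case-by-case analysis combining the strict convexity of $f_\pm$ with the results of Sections~\ref{sec:outer}, \ref{sec:layer}, and \ref{sec:matching}. Strict convexity endows each $f_\pm$ with a unique sonic point $u_\pm^\star$ (where $f'_\pm$ vanishes and changes sign), and makes available the sharp GNL inequalities inside~\eqref{eq:RLconsR}--\eqref{eq:RLconsL}.

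\emph{Case $u_L<u_R$.} To rule out interfacial standing shocks, observe that a right standing shock would force, by the strict GNL version of~\eqref{eq:RLconsR}, the chain $\calU_{+\infty}>u_+^\star>u_+$, while the monotonicity~\eqref{eq:monotL} combined with $u_R-u_L>0$ gives $u_+\ge\calU_{+\infty}$, a contradiction; the left case is symmetric. When in addition a non trivial inner profile is present, the absence of standing shocks identifies $u_\pm=\calU_{\pm\infty}$, and Theorem~\ref{thm:matchCond} together with the asymptotic condition of Theorem~\ref{thm:RelaxLayer}(\ref{thm:RL3}) yields $u_-^\star\le u_-<u_+\le u_+^\star$. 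Since $u_-\neq u_+$, Corollary~\ref{cor:MR3}(iii) forces any trace adjacent to an outer wave to sit exactly at the corresponding sonic point; putting together the Lax-admissibility constraint which places a convex rarefaction in the correct half-line, the only compatible outer configuration produces the required chain $\calU_{-\infty}=u_-=u_-^\star<u_+=\calU_{+\infty}=u_+^\star$.

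\emph{Case $u_L>u_R$.} The reversed monotonicity permits standing shocks, but the coexistence of either standing shock with a non trivial inner profile is excluded: Theorem~\ref{thm:matchCond} (last bullet) demands $f'_\pm(\calU_{\pm\infty})=0$, contradicting the strict GNL inequalities. Hence the three sub-items of the statement are disjoint. For two coexisting standing shocks (necessarily with trivial inner $\calU_{-\infty}=\calU_{+\infty}=:\calU_\flat$), the strict GNL chain $u_->u_-^\star>\calU_\flat>u_+^\star>u_+$ yields $u_-^\star>u_+^\star$, and a direct Lax check shows that both $u_L$ and $u_R$ lie on the wrong side of their sonic points to carry any outer shock, forcing the outer to coincide with the Riemann data and the announced ordering $u_L>u_-^\star>u_+^\star>u_R$. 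The single-shock sub-cases are obtained by the same template. For a non trivial inner without standing shocks, the monotone decreasing profile $\calU$ satisfies $\calU(y)<\calU_{-\infty}$ and $\calU(y)>\calU_{+\infty}$ at finite $y$; feeding these signs into the integrals~\eqref{eq:condasymptR}--\eqref{eq:condasymptL} shows that the equalities $\calU_{-\infty}=u_-^\star$ or $\calU_{+\infty}=u_+^\star$ would yield integrands of the wrong sign near the corresponding infinity, so the strict inequalities $\calU_{-\infty}>u_-^\star$ and $\calU_{+\infty}<u_+^\star$ must hold; Corollary~\ref{cor:MR3}(iii) then rules out any outer shock, yielding $u_-=u_L=\calU_{-\infty}$ and $u_+=u_R=\calU_{+\infty}$.

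The delicate step is the sign analysis of the asymptotic integral conditions in the decreasing regime, which precisely forces the strict inequalities in Case $u_L>u_R$ and distinguishes it from Case $u_L<u_R$, where equality at the sonic points is attainable; the rest of the proof amounts to organising the wave-fan structure via Corollary~\ref{cor:MR3} and the Lax condition under the strict-convexity assumption.
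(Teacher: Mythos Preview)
Your proof follows essentially the same route as the paper's: both are case analyses combining the strict GNL inequalities in~\eqref{eq:RLconsR}--\eqref{eq:RLconsL} with the monotonicity constraints~\eqref{eq:monotL}--\eqref{eq:monotR} and the non-trivial-layer bounds~\eqref{eq:nontrivial}, then invoking Corollary~\ref{cor:MR3} to determine the outer structure. The logical skeleton (standing shocks force $u_L>u_R$; standing shock plus non-trivial layer is incompatible via the sign clash between~\eqref{eq:RLconsR}/\eqref{eq:RLconsL} and~\eqref{eq:nontrivial}; perfect matching in the decreasing case forces the outer solution to be trivial) is identical.

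You do add one genuine refinement the paper omits: in the decreasing non-trivial-layer case you exploit the asymptotic integral conditions~\eqref{eq:condasymptR}--\eqref{eq:condasymptL} to upgrade $\calU_{-\infty}\ge u_-^\star$ and $\calU_{+\infty}\le u_+^\star$ to strict inequalities, by observing that equality would make the integrand have the wrong sign near infinity (since $\calU$ is strictly monotone). The paper stops at the weak inequalities coming from~\eqref{eq:nontrivial}; your argument is correct and slightly sharper, and your remark that this mechanism is unavailable in the increasing case (where equality at the sonic points is compatible with the asymptotic conditions) is a nice observation.

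One small slip: you write that Theorem~\ref{thm:matchCond} ``demands $f'_\pm(\calU_{\pm\infty})=0$'', but~\eqref{eq:nontrivial} only gives $f'_-(\calU_{-\infty})\ge 0$ and $f'_+(\calU_{+\infty})\le 0$. The contradiction with the strict sign from~\eqref{eq:RLconsR} or~\eqref{eq:RLconsL} survives unchanged, so this does not affect the argument.
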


\begin{proof}

The proof of this result is a consequence of Theorem~\ref{thm:matchCond}. Indeed and for instance, the case of a self-similar solution with $\calU_{+\infty}\neq u_+$ (corresponding to a standing shock for $f_+$ sticked at the interface) comes with the strict inequalities \eqref{eq:RLconsR}, namely:
\begin{equation}
\label{interfCLR}
f_+'(\calU_{+\infty}) >0, \quad f_+'(u_+)<0.
\end{equation}
In other words, $u_+<u^\star_+<\calU_{+\infty}$.

Correspondingly, the non matching property $\calU_{-\infty}\neq u_-$ implies that
\begin{equation}
\label{interfCLL}
f_-'(\calU_{-\infty}) <0, \quad f_-'(u_-)>0,
\end{equation}
that is $\calU_{-\infty}<u^\star_- < u_-$.

In both cases observe that necessarily $u_L>u_R$. The monotonicity properties \eqref{eq:monotL} and \eqref{eq:monotR} show that these two inequalities may coexist simultaneously. Since the solution is decreasing $\calU_{+\infty} \leq \calU_{-\infty}$, we necessarily have in this last case $u^\star_+ < u^\star_-$.

As a consequence of \eqref{eq:nontrivial}, the validity of only one of the two sets of inequalities \eqref{interfCLR} or \eqref{interfCLL} suffices to imply that the inner coupling profile $\calU(y)$ is constant, namely $\calU_{+\infty}=\calU_{-\infty}$.
Let us denote  $\calU = \calU_{+\infty}=\calU_{-\infty}$ this common value.

Figure~\ref{fig:22} displays the typical configuration in which both relations \eqref{interfCLR} and \eqref{interfCLL} coexist, and highlight the fact that infinitely many configurations satisfying \eqref{interfCLR} and \eqref{interfCLL} may be built.

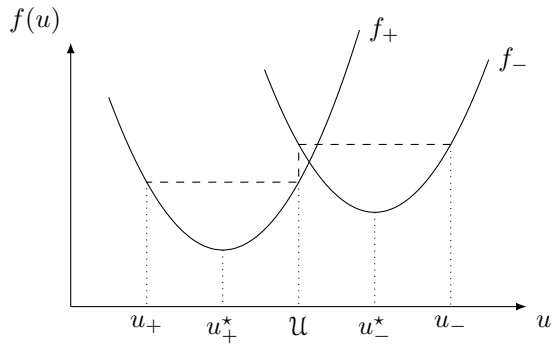
\begin{figure}[ht!]
\begin{center}
\begin{tikzpicture}[>=latex,scale=1]
 \draw[->] (-3,0) -- (3,0) node[below right] {$u$};
 \draw[->] (-3,0) -- (-3,3.5) node[above left] {$f(u)$};
 \begin{scope}
 \draw plot[domain=-0.45:2.5,smooth] (\x,{1.25+0.9*(\x-1)*(\x-1)}) node[right] {$f_-$} ;
 \draw plot[domain=-2.5:0.8,smooth] (\x,{0.75+0.9*(\x+1)*(\x+1)}) node[right] {$f_+$} ;
 \end{scope}
 \begin{scope}[dotted]
 \draw (1,1.25) -- (1,0) node[below] {$u^\star_-$};
 \draw (-1,0.75) -- (-1,0) node[below] {$u^\star_+$};
 \end{scope}
 \draw[dashed] (2,2.15) -- (0,2.15) -- (0,1.65) -- (-2,1.65);
 \begin{scope}[dotted]
 \draw (2,2.15) -- (2,0) node[below] {$u_-$};
 \draw (0,1.65) -- (0,0) node[below] {$\calU$};
 \draw (-2,1.65) -- (-2,0) node[below] {$u_+$};
 \end{scope}
\end{tikzpicture}
\end{center}
\caption{Non matching property}
\label{fig:22}
\end{figure}

Next, a more familiar situation can be obtained when assuming the set of inequalities \eqref{interfCLR} to be valid and the inequalities in \eqref{interfCLL} to be wrong (or conversely \eqref{interfCLL} to hold and \eqref{interfCLR} to fail) which corresponds to the matching property $u_-=\calU_{-\infty}$. In other words a pure stationary shock for the flux $f_+$ is sticked to the interface (resp. a pure stationary shock for the flux $f_-$ stands at the interface).

The Corollary~\ref{cor:MR3} gives in fact the following: assuming \eqref{interfCLR} to be satisfied necessarily implies that $f'(u(s))<0,\ \forall s>0,\ u(s)=u_R,\ \forall s>0$. Conversely assuming \eqref{interfCLL} to be true gives that  $u(s)=u_L$ for all $s<0$.

Let us investigate the existence of a relaxation profile in the case of a perfect matching on the left and on the right, namely $u_-=\calU_{-\infty}$ and $u_+=\calU_{+\infty}$. Let us investigate, under this assumption, the existence of a non trivial coupling profile. Under the matching property, the existence condition requires the inequalities $f_-'(u_-)\geq 0$ and $f_+'(u_+)\leq 0$ to be fullfilled. Assuming first a decreasing initial data, namely $u_L>u_R$, the solution decreases and the above inequalities imply $u(s)=u_L,\ s<0$ since $f_-'(u(s))\geq f'_-(u_-)\geq 0$ and $u(s)=u_R,\ s>0$.
The initial data satisfying $f_+'(u_R)\leq 0$ and $f_-'(u_L)\geq 0$, then necessarily one has $u_L\geq u_-^\star$ and $u_+^\star \geq u_R$.

Conversely assuming $u_L<u_R$, the self-similar solution increases and is smooth in the half lines $\{\xi<0\}$ and $\{\xi>0\}$, involving only rarefaction fans. Assuming $u_- > u_L$, that is $f'_-(u(s))<0$ for $s<0$ small enough, then necessarily $f'_-(u_-) = 0$ so that $u_- = u^\star_-$. Similarly, one has $u_+ = u^\star_+$ with $u_- < u_+$ that is: $u^\star_- < u^\star_+$ is a necessary condition.

\end{proof}


\subsection{A partial selection criterion}

In the last proof (see also Figure~\ref{fig:22}), it appears that in the case $u^\star_+<u^\star_-$, two distinct families of possible self-similar solutions depending on a real parameter may arise. Such continuum of solutions arise for pairs of states $(u_L,u_R)$ with either $u_L<u^\star_-$ and $u^\star_+<u_R$, or with $u_L>u^\star_+$ and $u^\star_->u_R$. Thanks to the results in the previous sections it is possible now to characterize entirely the possible value of $\bar u$ (among a continuum) that may appear in each of these CRD solutions:

\begin{proposition}[Uniqueness of the double-waved CRD solutions in the convex case]
Let $f_-$ and $f_+$ be two strictly convex flux functions with respective sonic points $u^\star_-$ and $u^\star_+$ (finite or not).
Let $u$ be a CRD solution to~\eqref{eq:left}--\eqref{eq:right} with Riemann data $(u_L,u_R)$, in the class of solutions consisting of a left-wave in the half-space $\{\xi<0\}$ followed by a constant state $\bar u$ across the interface, and a right-wave in the half-space $\{\xi>0\}$, then the intermediate state $\bar u\in(u^\star_+,u^\star_-)$ comes uniquely defined by the selection criterion \eqref{eq:selection}.
\end{proposition}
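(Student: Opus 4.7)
The plan is to convert the class-assumption into the calibration identity \eqref{eq:selection} and then reduce the uniqueness of $\bar u$ to an elementary monotonicity argument based on the strict convexity of $f_\pm$.

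First I would apply Corollary~\ref{cor:MR3}(ii): since $\bar u \in (u^\star_+,u^\star_-)$ forces $f'_-(\bar u) < 0 < f'_+(\bar u)$, both half-space wave fans are non-empty, so any CRD solution in this class must satisfy
\[
\min_{\xi \le 0} h(\xi;u) = \min_{\xi \ge 0} h(\xi;u).
\]
By Lemma~\ref{lm:hcvg}, the convex function $h(\cdot;u)$ reaches its minimum on each half-line at the interior-side endpoint $\lambda_\pm$ of the associated wave fan; and because $u \equiv \bar u$ on $(\lambda_-,0)$ and on $(0,\lambda_+)$, a direct integration yields the explicit closed form
\[
h(\lambda_\pm;u) = \frac{\lambda_\pm^2}{2} - \lambda_\pm\, f'_\pm(\bar u).
\]

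The second step is to identify $\lambda_\pm$ in terms of $\bar u$. Strict convexity forces a clean dichotomy: either $u_L<u_R$ and both waves are rarefactions with $\lambda_\pm = f'_\pm(\bar u)$, or $u_L>u_R$ and both waves are shocks with $\lambda_\pm = \sigma_\pm(\bar u)$, where $\sigma_\pm$ are the Rankine--Hugoniot speeds. In the rarefaction case the identity collapses to $(f'_-(\bar u))^2 = (f'_+(\bar u))^2$; taking the signs of $f'_\pm(\bar u)$ on $(u^\star_+,u^\star_-)$ into account, this is equivalent to $\Phi(\bar u) := f'_-(\bar u) + f'_+(\bar u) = 0$. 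Strict convexity makes $\Phi$ strictly increasing, and its boundary values $\Phi(u^\star_+) = f'_-(u^\star_+) < 0$ and $\Phi(u^\star_-) = f'_+(u^\star_-) > 0$ deliver, via the intermediate value theorem, a unique root in $(u^\star_+,u^\star_-)$.

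In the shock case, setting $G_\pm(\bar u) := h(\sigma_\pm(\bar u);u)$, a direct differentiation gives
\[
G_\pm'(\bar u) = \sigma_\pm'(\bar u)\bigl(\sigma_\pm(\bar u) - f'_\pm(\bar u)\bigr) - \sigma_\pm(\bar u)\, f''_\pm(\bar u),
\]
and the following sign facts combine to yield $G_-' > 0$ and $G_+' < 0$ throughout the admissible window: the Lax entropy inequalities $\sigma_- > f'_-(\bar u)$ and $\sigma_+ < f'_+(\bar u)$; the supporting-line inequality for a strictly convex function, which gives $\sigma_\pm'(\bar u) > 0$; strict convexity, which gives $f''_\pm > 0$; and the half-space location of the shocks, which gives $\sigma_- < 0 < \sigma_+$. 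Hence the two graphs can intersect at most once, and the selection criterion pins $\bar u$ down unambiguously. The main obstacle I anticipate is precisely the sign bookkeeping in the shock case, in particular the verification that $\sigma_-<0<\sigma_+$ remains valid across the entire admissible range of $\bar u$ (otherwise the shocks would migrate to the wrong half-space and the double-wave structure would break down); once those signs are secured, the uniqueness is a one-line application of the intermediate value theorem.
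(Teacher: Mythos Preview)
Your plan is correct and follows essentially the same route as the paper: split into the double-rarefaction and double-shock cases, read off the explicit values $h(\lambda_\pm;u)=\tfrac{\lambda_\pm^2}{2}-\lambda_\pm f'_\pm(\bar u)$ from Lemma~\ref{lm:hcvg}, reduce the rarefaction case to $f'_-(\bar u)+f'_+(\bar u)=0$, and in the shock case differentiate to obtain a strictly monotone selection function. The only cosmetic difference is that the paper works directly with the single function $\kappa:=G_--G_+$ and also checks the endpoint signs $\kappa(u^\star_+)<0<\kappa(u^\star_-)$ to secure existence via the intermediate value theorem; you should include that endpoint computation explicitly in the shock case rather than leave it implicit.
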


\begin{proof}
Due to the monotonicity property of the solution and to the strict convexity of the flux functions, the solution $u$ consists in two waves of the same kind: either a rarefaction wave for the flux function $f_-$ connecting $u_L$ to $\bar u$ followed by another rarefaction wave for the flux function $f_+$ connecting $\bar u$ to $u_R$, or a shock wave for the flux function $f_-$ connecting $u_L$ to $\bar u$ followed by another shock wave for the flux function $f_+$ connecting $\bar u$ to $u_R$. 
Figures~\ref{fig-dblraref} and~\ref{fig-dblshock} represent these two situations, with the solution $u$ in the $x-t$ plane on the left picture, and the subordinate function $h(\cdot;u)$ on the right picture. The characterization underlined in Corollary~\ref{cor:MR3} becomes somehow explicit and we detail hereafter the situation in each case.
\begin{itemize}
\item Suppose first $u_L<\bar u<u_R$. The CRD solution is a double rarefaction wave, as depicted in Figure~\ref{fig-dblraref}. {Let us denote} by $\lambda_-(\bar u):=f_-'(\bar u)<0$ and $\lambda_+(\bar u):=f_+'(\bar u)>0$ the characteristic velocities in between the two rarefaction waves.
Obviously, such a situation requires the following ordering to arise: $u^\star_+<\bar u<u^\star_-$. Using then the explicit expression \eqref{eq:hprime}, the condition \eqref{eq:selection} reads:
\begin{equation}
\label{eq:ubarraref}
\lambda_-(\bar u)^2 = \lambda_+(\bar u)^2,
\end{equation}
or equivalently  (remember $\lambda_-(\bar u ) <0 < \lambda_+(\bar u)$),
\begin{equation*}
\lambda_-(\bar u) + \lambda_+(\bar u) = 0.
\end{equation*}
It suffices now to observe that $\lambda_-(u^\star_-) + \lambda_+(u^\star_-)=f_+'(u^\star_-)>0$ and $\lambda_-(u^\star_+) + \lambda_+(u^\star_+)=f_-'(u^\star_+)<0$, so that the strict convexity of the sum $f_-+f_+$ suffices to conclude to the existence and uniquenesse of the solution $\bar u$ to \eqref{eq:ubarraref}.

\item Suppose now $u_R<\bar u<u_L$. The CRD solution is a double shock wave, as depicted in Figure~\ref{fig-dblshock}. {Let us denote}
\begin{equation}
\label{eq:shockvel}
\lambda_-(\bar u):=\frac{f_-(\bar u)-f_-(u_L)}{\bar u-u_L}<0,\textrm{ and }\lambda_+(\bar u):=\frac{f_+(\bar u)-f_+(u_R)}{\bar u-u_R}>0
\end{equation}
the velocities of both shock waves satisfying the Lax inequalities:
\begin{equation}
\label{eq:laxconv}
f_-'(u_L)>\lambda_-(\bar u)>f_-'(\bar u),\textrm{ and } f_+'(\bar u)>\lambda_+(\bar u)>f_+'(u_R).
\end{equation}
Obviously, such a situation requires again the following ordering to arise: $u^\star_+<\bar u<u^\star_-$. Using the explicit expression \eqref{eq:hprime}, the condition \eqref{eq:selection} reads:
\begin{equation}
\label{eq:ubarshock}
\lambda_-(\bar u)(\tfrac12\lambda_-(\bar u)-f_-'(\bar u)) = \lambda_+(\bar u)(\tfrac12\lambda_+(\bar u)-f_+'(\bar u)).
\end{equation}
{Let us define} for $\bar u\in(u^\star_+,u^\star_-)$ the quantity
\[\kappa(\bar u):=\lambda_-(\bar u)\big(\tfrac12\lambda_-(\bar u)-f_-'(\bar u)\big) - \lambda_+(\bar u)\big(\tfrac12\lambda_+(\bar u)-f_+'(\bar u)\big).\]
For the bound values, we get
\begin{equation}
\kappa(u^\star_+) = \lambda_-(u^\star_+)(\tfrac12\lambda_-(u^\star_+)-f_-'(u^\star_+)) - \tfrac12\lambda_+(u^\star_+)^2,
\end{equation}
\begin{equation}
\kappa(u^\star_-) = \tfrac12\lambda_-(u^\star_+)^2 - \lambda_+(u^\star_-)(\tfrac12\lambda_+(u^\star_-)-f_+'(u^\star_-)),
\end{equation}
and due to \eqref{eq:laxconv} and \eqref{eq:shockvel}, $\tfrac12\lambda_-(u^\star_+)-f_-'(u^\star_+)>-\tfrac12\lambda_-(u^\star_+)>0$ on the one hand, and $\tfrac12\lambda_+(u^\star_-)-f_+'(u^\star_-)<-\tfrac12\lambda_+(u^\star_-)<0$ on the other hand. Finally we get
\begin{equation}
\kappa(u^\star_+) <0 < \kappa(u^\star_-). 
\end{equation}
Suppose now for technical convenience that $f_-$ and $f_+$ are twice differentiable.
The function $\kappa$ is then differentiable on $(u^\star_+,u^\star_-)$, and for all $\check{u}\in(u^\star_+,u^\star_-)$ such that $u_R<\check{u}<u_L$ and \eqref{eq:shockvel}--\eqref{eq:laxconv} are both satisfied, we obtain
\begin{multline}
\kappa'(\check{u}) = \left(\lambda_-'(\check{u})(\lambda_-(\check{u})-f_-'(\check{u}))-\lambda_-(\check{u})f_-''(\check{u})\right) \\- \left(\lambda_+'(\check{u})(\lambda_+(\check{u})-f_+'(\check{u}))-\lambda_+(\check{u})f_+''(\check{u})\right).
\end{multline}
All occurring quantities are signed so that $\kappa'(\check{u})>0$ and we obtain existence and uniqueness of the solution $\bar u$ to \eqref{eq:ubarshock}.
\end{itemize}
\end{proof}

\begin{figure}[!ht]
\centering
\vspace{-3.3em}
\hfill
\subfloat[\label{fig-dblraref-u}]{%
\begin{tikzpicture}[>=latex,scale=1]
 \draw[->] (-2,0) -- (2,0) node[right] {$x$};
 \draw[->] (-2,0) -- (-2,2) node[right] {$t$};
 \foreach \angle in {50,54,...,70} {\draw (0,0) -- +(\angle:2);}
 \foreach \angle in {130,126,...,110} {\draw (0,0) -- +(\angle:2);}
 \node at (-1.3,0.8) {$u_L$};
 \node at (1.3,0.8) {$u_R$};
 \node at (0,1.5) {${\bar u}$};
 \node[below] at (0,0) {$0$};
\end{tikzpicture}
}
\hfill
\subfloat[\label{fig-dblraref-h}]{%
\begin{tikzpicture}[>=latex,scale=1.8]
 \draw[->] (-2,0) -- (2,0) node[right] {$\xi$};
 \draw[->] (0,-0.7) -- (0,1) node[right] {$h(\xi;u)$};
 \draw[domain=-2:-1,smooth] plot (\x,{\x*\x/2+\x+1/2-1/8});
 \draw[domain=-1:-1/2,smooth] plot (\x,-1/8);
 \draw[domain=-1/2:0,smooth] plot (\x,{\x*\x/2+\x/2});
 \draw[domain=0:1,smooth] plot (\x,{\x*\x/2-1*\x});
 \draw[domain=1:1.5,smooth] plot (\x,-1/2);
 \draw[domain=1.5:2,smooth] plot (\x,{\x*\x/2-1.5*\x-1/2-1.5*1.5/2+1.5*1.5});
 \draw[dashed] (-1,-1/8) -- (-1,0) node[right,rotate=60] {\small$f_-'(u_L)$};
 \draw[dashed] (-1/2,-1/8) -- (-1/2,0) node[right,rotate=60] {\small$f_-'(\bar u)$};
 \draw[dashed] (1,-1/2) -- (1,0) node[right,rotate=60] {\small$f_+'(\bar u)$};
 \draw[dashed] (3/2,-1/2) -- (3/2,0) node[right,rotate=60] {\small$f_+'(u_R)$};
 \node[above,fill=white,outer sep=0.4pt] at (0,0) {$0$};
\end{tikzpicture}
}
\hfill
\caption{Structure of double-rarefaction solutions}
\label{fig-dblraref}
\end{figure}
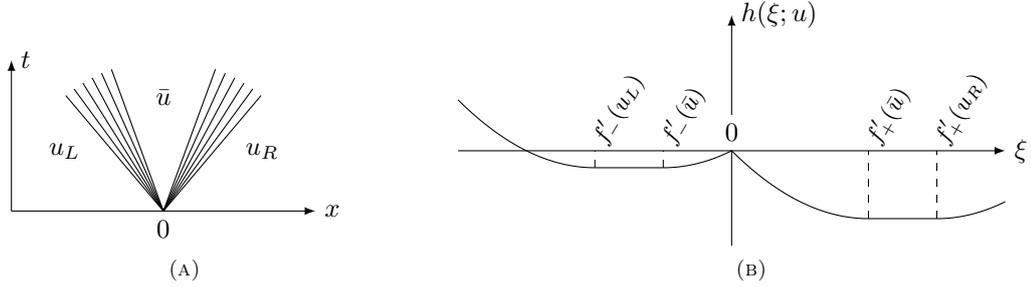
%
\begin{figure}[!ht]
\centering
\hfill
\subfloat[\label{fig-dblshock-u}]{%
\begin{tikzpicture}[>=latex,scale=1]
 \draw[->] (-2,0) -- (2,0) node[right] {$x$};
 \draw[->] (-2,0) -- (-2,2) node[right] {$t$};
 \foreach \angle in {60} {\draw (0,0) -- +(\angle:2);}
 \foreach \angle in {120} {\draw (0,0) -- +(\angle:2);}
 \node at (-1.3,0.8) {$u_L$};
 \node at (1.3,0.8) {$u_R$};
 \node at (0,1.5) {${\bar u}$};
 \node[below] at (0,0) {$0$};
\end{tikzpicture}
}
\hfill
\subfloat[\label{fig-dblshock-h}]{%
\begin{tikzpicture}[>=latex,scale=1.8]
 \draw[->] (-1.8,0) -- (1.8,0) node[right] {$\xi$};
 \draw[->] (0,-0.7) -- (0,0.8) node[right] {$h(\xi;u)$};
 \draw[domain=-1.5:-3/4,smooth] plot (\x,{\x*\x/2-3/4-1/4*(\x+3/4)});
 \draw[domain=-3/4:0,smooth] plot (\x,{\x*\x/2+\x});
 \draw[domain=0:1,smooth] plot (\x,{\x*\x/2-1.1*\x});
 \draw[domain=1:1.8,smooth] plot (\x,{\x*\x/2-1.1-1/4*(\x-1)});
 \draw[dashed] (1,-0.6) -- (1,0) node[right,rotate=60] {$\lambda_+(\bar u)$};
 \draw[dashed] (-3/4,-15/32) -- (-3/4,0) node[right,rotate=60] {$\lambda_-(\bar u)$};
 \node[above,fill=white,outer sep=0.4pt] at (0,0) {$0$};
\end{tikzpicture}
}
\hfill
\caption{Structure of double-shock solutions}
\label{fig-dblshock}
\end{figure}
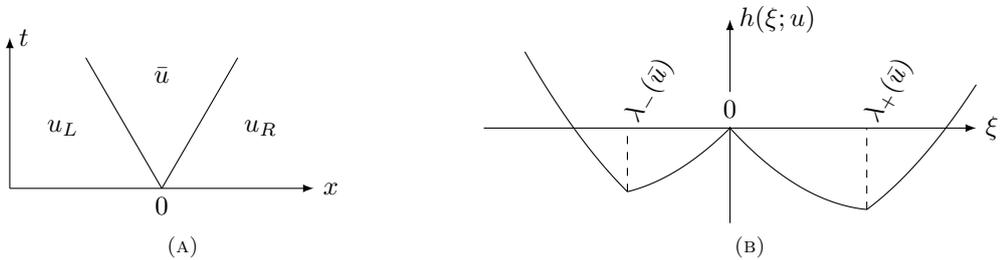

\begin{remark}
Whithout the convexity assumptions on the fluxes $f_-$ and $f_+$, the uniqueness result above is lost in general. Some numerical counterexamples are given in the last section of this paper.
\end{remark}


\subsection{Explicit analysis of the criterion for quadratic convex flux functions}

For the special case of very simple convex flux functions $f_-$ and $f_+$, {it is possible} to make explicit the intermediate value $\bar u$ located between two rarefaction waves or two shock waves (solution to \eqref{eq:ubarraref} or to \eqref{eq:ubarshock}). This is the aim of the following Corollary.
\begin{corollary}[Double-waved CRD solutions for convex quadratic fluxes]
\label{cor:quadratic}
{Let us fix} $c<0$ and consider the flux functions $f_-(u)=\tfrac12 u^2$ and $f_+(u)=\tfrac12 (u-c)^2$. We are concerned with Riemann data $(u_L,u_R)$.
\begin{itemize}
\item There is at most one double rarefaction CRD solution to \eqref{eq:left}--\eqref{eq:right}. The intermediate state in between the two waves is 
\begin{equation}
  \label{eq:dblraref}
\bar u =\frac c2.
\end{equation}
This solution may exist only under the constraint
\begin{equation}
\label{eq:ubarbound1}
u_L < \frac c2 < u_R.
\end{equation}
\item There is at most one double shock CRD solution to \eqref{eq:left}--\eqref{eq:right}. The intermediate state in between the two waves is 
\begin{equation}
  \label{eq:dblshock}
\bar u=\dfrac{u_R^2-u_L^2-4c^2}{2(u_R-u_L-4c)}.
\end{equation}
This solution may exist only under the set of constraints
\begin{equation}
\left.
\begin{aligned}
  2c<u_R<c/2, &\qquad u_R-u_L>2c, \label{eq:ubarbound}\\
  u_R-4c-2\sqrt{c(5c-2u_R)} < &u_L < u_R+2\sqrt{c(2u_R-c)}.
  \end{aligned}
\right.
\end{equation}
\end{itemize}
\end{corollary}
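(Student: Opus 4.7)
The plan is to invoke the double-waved uniqueness proposition from the convex case and specialize all the abstract quantities to the given quadratic fluxes. One computes $f_-'(u) = u$ and $f_+'(u) = u - c$, so the sonic points are $u_-^\star = 0$ and $u_+^\star = c$; the ordering $u_+^\star < u_-^\star$ holds since $c < 0$, and the preceding proposition applies.

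For the double rarefaction case, the selection criterion \eqref{eq:ubarraref} specializes to $\bar u^2 = (\bar u - c)^2$. Since $\lambda_-(\bar u) = \bar u < 0 < \bar u - c = \lambda_+(\bar u)$ in this regime, the sign choice is forced and one must take $\bar u = -(\bar u - c)$, which immediately yields $\bar u = c/2$. The sonic constraint $c < c/2 < 0$ is then automatic, and the monotonicity condition $u_L < \bar u < u_R$ collapses to the stated constraint \eqref{eq:ubarbound1}.

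For the double shock case, I would first write the Rankine-Hugoniot speeds as $\lambda_-(\bar u) = (u_L + \bar u)/2$ and $\lambda_+(\bar u) = (u_R + \bar u)/2 - c$, then substitute into \eqref{eq:ubarshock}. After multiplying through by $8$, the equation becomes
\begin{equation*}
(u_L + \bar u)(u_L - 3\bar u) = (u_R + \bar u - 2c)(u_R - 3\bar u + 2c),
\end{equation*}
and upon expansion the $\bar u^2$ terms cancel, leaving a linear equation in $\bar u$ whose solution is precisely \eqref{eq:dblshock}.

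The main obstacle will be extracting the explicit existence constraints \eqref{eq:ubarbound}. I would impose in turn the sonic ordering $c < \bar u < 0$, the monotonicity $u_R < \bar u < u_L$, and the wave-speed sign conditions $\lambda_-(\bar u) < 0$ and $\lambda_+(\bar u) > 0$ (the Lax inequalities reducing here to the already imposed monotonicity). Substituting \eqref{eq:dblshock} into the sonic ordering should give $2c < u_R < c/2$; substituting into $u_R < \bar u$ and $\bar u < u_L$ produces two inequalities quadratic in $u_L$ whose discriminants turn out to factor as $16\,c(2u_R - c)$ and $16\,c(5c - 2u_R)$ respectively, both nonnegative under the sonic bounds on $u_R$, and extracting these square roots yields the bounds $u_R - 4c - 2\sqrt{c(5c - 2u_R)} < u_L < u_R + 2\sqrt{c(2u_R - c)}$; finally, the positivity $\lambda_+(\bar u) > 0$ factors through $(u_R - u_L - 2c)$ and delivers $u_R - u_L > 2c$. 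The careful bookkeeping of signs of numerators and denominators when clearing fractions, together with the recognition of the perfect-square discriminants, constitutes the main computational difficulty.
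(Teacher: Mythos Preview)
Your proposal is correct and follows essentially the same approach as the paper: identify the sonic points $u_+^\star=c$, $u_-^\star=0$, specialize the selection equations \eqref{eq:ubarraref} and \eqref{eq:ubarshock} to the quadratic fluxes, and read off the existence constraints from monotonicity and the sign conditions on the wave speeds. In fact the paper's own proof merely sketches this and explicitly leaves the algebraic details to the reader, so your write-up is more complete than what appears there; your only loose end is the precise bookkeeping of which inequality produces which piece of \eqref{eq:ubarbound}, which you rightly flag as the main computational chore.
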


\begin{proof}
The sonic points are here $u^\star_+=c$ and $u^\star_-=0$ respectively. Starting from the equations~\eqref{eq:ubarraref} and \eqref{eq:ubarshock}, the results follow directly from simple algebraic calculations. The required inequalities \eqref{eq:ubarbound} (and similarly for \eqref{eq:ubarbound1} in the case of two rarefactions) are equivalent to the monotonicity property for $u$ (see~\eqref{eq:monotonicity}) and the natural ordering of velocities (see~\eqref{eq:shockvel} for example). The intermediate state $\bar u$ has to belong to the interval $[u_R,u_L]$, what restricts the domain of existence of such a double-waved solution. We leave these details to the reader.
\end{proof}


\subsection{Graphical overview for the quadratic convex case}

We present now two Figures~\ref{fig-pos} and~\ref{fig-neg} corresponding respectively to the cases $c>0$ and $c<0$. The main idea is to represent, in the plane of Riemann data $(u_L,u_R)$, the map of all possible CRD solutions $u$ to \eqref{eq:left}--\eqref{eq:right} satisfying the constraints we underlined previously. For the frontier curves to be explicit, we again restrict the analysis to the case of quadratic convex fluxes $f_-$ and $f_+$ used in the Corollary~\ref{cor:quadratic} above. Let us mention importantly that we did not prove that any of these solutions is effectively the limit of a subsequence of a viscous self-similar solution to~\eqref{eq:Dafermos}--\eqref{eq:Boundcond}. We only proceed by using necessary conditions. Of course, when the uniqueness occurs, then the exhibited solution has to be the unique CRD solution and to be indeed the limit of the considered vanishing process.
Let us now describe more into the details these results.

In Figure~\ref{fig-pos}, many (straight) curves represent either vanishing characteristic velocities or standing shock waves for either the left of the right problem. These curves, together with the usual line $u_R=u_L$ (monotonicity transition), divide the plane into 10 areas, named from letter A to letter J. In any of these area, the associated table gives the description of possible CRD solutions. In that description, $R_-$, $R_+$, $S_-$ and $S_+$ correspond respectively to a left-rarefaction wave, a right-rarefaction wave, a left-shock wave, a right-shock wave. Compound solutions may also involve internal transition wave, which named as $T$, for example $R_-TR_+$ is a solution consisting of a left-rarefaction wave, sticked to an interfacial transition wave, followed by a right-rarefaction wave.

In Figure~\ref{fig-neg}, due to the inversion {of the characteristic velocities $c<0$}, the characteristic boundaries of the areas are now more numerous, dividing the plan into 17 areas, named from letter A to letter Q. The first associated table gives again the description of possible CRD solutions in each area. For example, in the squared region $\{(u_L,u_R),\ c<u_L<c/2,\ c/2<u_L<0\}$, named G, three solutions satisfy all the considered contraints: a single left rarefaction wave $R_-$, a single right rarefaction wave $R_+$, and a double-rarefaction with intermediate state $\bar u=c/2$ ($R_-R_+$). 
The second table gives the reversed description of possible solutions, and the corresponding area of presence for each of them. As proved from Corollary~\ref{cor:quadratic} the double-waved solutions with some intermediate state, i.e. in families $S_-S_+$ and $R_-R_+$, are uniquely defined thanks to characterizations~\eqref{eq:dblraref} and~\eqref{eq:dblshock} respectively, therefore the corresponding frontier curves manifest the constraints~\eqref{eq:ubarbound1} and~\eqref{eq:ubarbound} respectively.


\clearpage

\begin{figure}[!ht]
   \centering
    \includegraphics[width=0.75\linewidth]{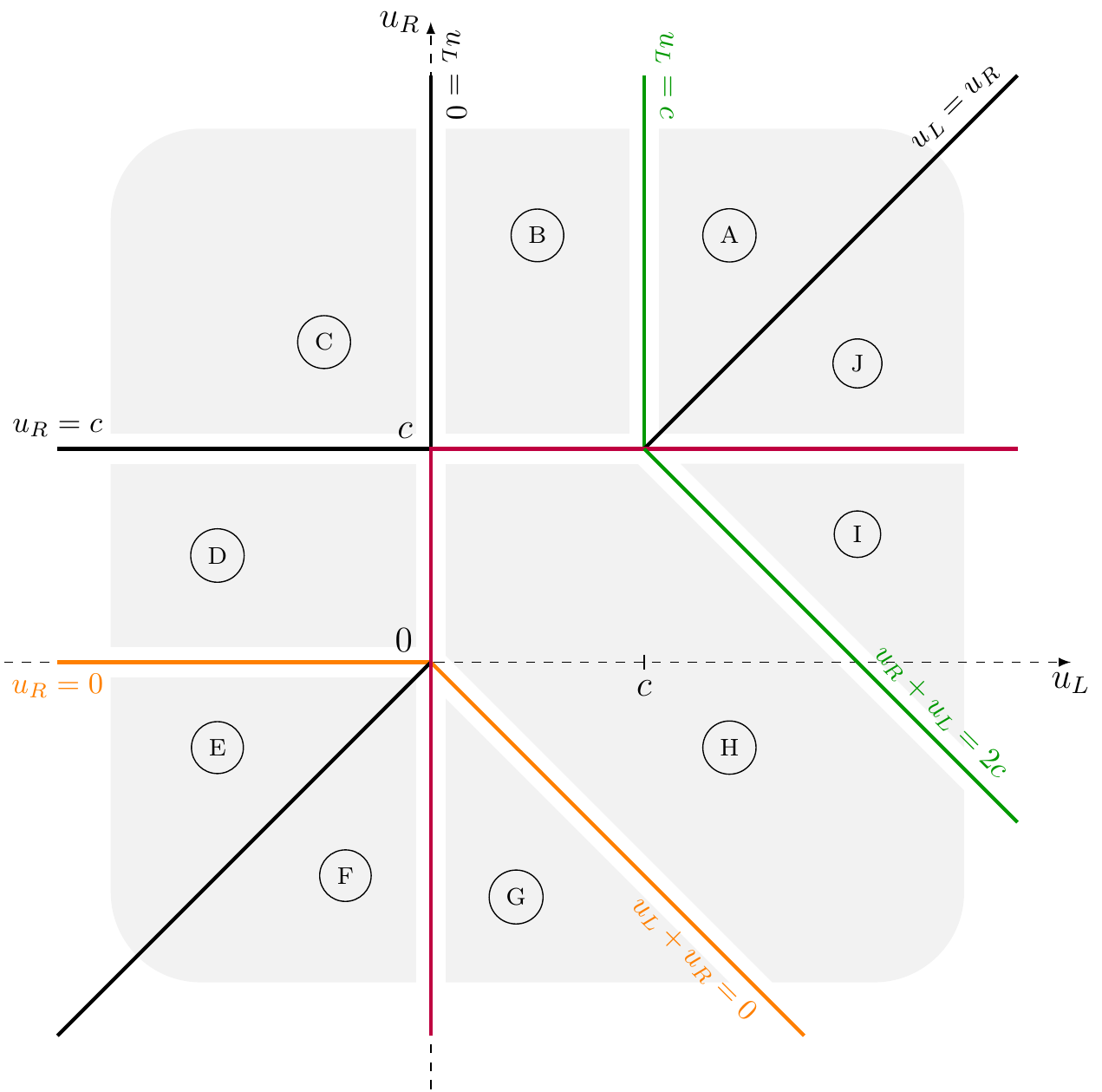}

    \vspace{1em}
    \begin{tabular}{|c|c|}
      \hline
      Area & Solutions\\
      \hline
      A & $R_+$\\
      B & $TR_+$\\
      C & $R_-TR_+$ \\
      D & $R_-T$\\
      E & $R_-$\\
      F & $S_-$\\
      G & $S_-$ or $T$\\
      H & $T$\\
      I & $S_+$ or $T$\\
      J & $S_+$\\\hline
    \end{tabular}
   \caption{Candidate CRD solutions for two convex quadratic fluxes (case $c>0$).}
    \label{fig-pos}
\end{figure}

\clearpage

\begin{figure}[!ht]
   \centering
      \includegraphics[width=0.75\linewidth]{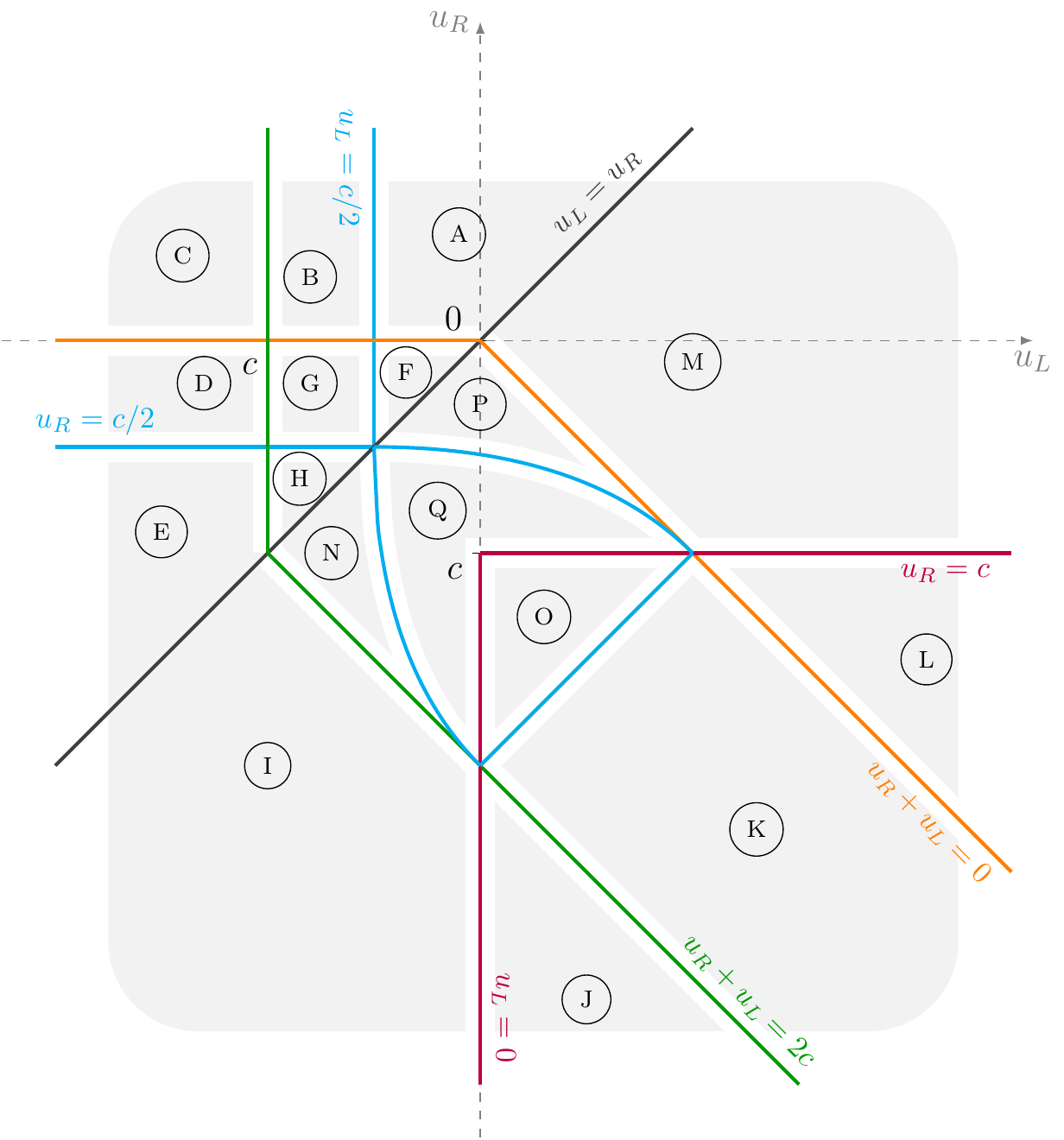}

      \vspace{1em}
      \begin{tabular}{|c|c|}
        \hline
        Area & Solutions\\
        \hline
        A & $R_+$\\
        B & $R_-R_+$ or $R_+$\\
        C & $R_-R_+$\\
        D & $R_-R_+$ or $R_-$\\
        E & $R_-$\\
        F & $R_-$ or $R_+$\\
        G & $R_-$ or $R_+$ or $R_-R_+$\\
        H & $R_-$ or $R_+$\\
      
        I & $S_-$\\
        J & $S_-$ or $T$\\
        K & $S_-$ or $S_+$ or $T$\\ 
        L & $S_+$ or $T$\\  
        M & $S_+$\\  
        N & $S_-$ or $S_+$\\
        O & $S_-$ or $S_+$ or $S_-S_+$ or $T$\\
        P & $S_-$ or $S_+$\\
        Q & $S_-$ or $S_+$ or $S_-S_+$\\\hline
      \end{tabular}
      \hspace{1cm}
      \begin{tabular}{|c|c|}
        \hline
        Solution & Areas\\
        \hline
        $R_+$ & A, B, F, G\\            
        $R_-$ & E, D, G, H\\            
        $R_-R_+$ & B, C, D, H\\         
        $S_-$ & I, J, K, N, O, P, Q\\   
        $S_+$ & K, L, M, N, O, P, Q\\   
        $S_-S_+$ & O, Q\\               
        $T$ & K, O\\                    
        \hline
      \end{tabular}
   \caption{Candidate CRD solutions for two convex quadratic fluxes (case $c<0$).}
   \label{fig-neg}
\end{figure}

\clearpage
\section{Numerical experiments}
\label{sec:numerics}

Through our previous analysis, the uniqueness results concerning the selection condition~\eqref{eq:MR3} are restricted to the class of double-waved solutions and to the coupling of strictly convex flux functions. To handle more general flux functions, we propose hereafter a brief numerical study, focusing on such class of solutions with no internal coupling layer. Our approach is based on the (discrete) Legendre-Fenchel transform that allows to compute an approximation of the self-similar entropy solution for both the left conservation law \eqref{eq:ESleft} and for the right conservation law \eqref{eq:ESright}, the intermediate state $\bar u$ being first arbitrarily given at the interface $\xi=0$ and then selected among possible values through the selection criterion~\eqref{eq:selection}. In the following lines, we first explain with more details our numerical strategy and then validate it in front of the quadratic case, with explicitly known solutions (from Corollary~\ref{cor:quadratic}). Finally we use this strategy to illustrate the existence of multiple double-waved CRD solutions for some non-convex fluxes. 


\subsection{Numerical strategy}

Let us first describe our numerical strategy. Being given three states $u_L,\ \bar u,\ u_R$, we are interested in computing, the entropy solution to~\eqref{eq:ESleft} and~\eqref{eq:ESright} satisfying the boundary conditions~\eqref{eq:BC}, such that the interface value equals $\bar u$ in a strong sense, i.e. for both left and right traces at $\xi=0$. Not any values of the parameter $\bar u$ are suitable, due to the coupling constraints, and characteristic velocities at the interface as well. We proceed using the brute-force method with many $\bar u$ values within the interval $[u_L,u_R]$. Let us consider only one half-problem, say with the flux $f_-$ and data $(u_L,\bar u)$. The corresponding Riemann entropy weak solutions may be obtained by using the classical convex or concave hulls for the corresponding flux function. The approximation of that solution is obtained by using the biconjugate $f_-^{\star\star}$ within the interval $[u_L,\bar u]$. This is done after introducing some small discretization parameters in the physical and dual spaces and assuming somehow the flux to be approximated thanks to piecewise affine flux function, as done by Dafermos in~\cite{Dafermos:72}. 
According to the relative position of $u_L$ and $u_R$ it is then possible to solve approximately either the equation~\eqref{eq:ubarraref} for double-rarefaction fans, or the equation~\eqref{eq:ubarshock} for double-shock solutions. After this, only some very few values of the intermediate parameter $\bar u$ do effectively solve the considered problem.


\subsection{Validation on the quadratic case}

The above strategy is first put to the test with the quadratic fluxes considered in Corollary~\ref{cor:quadratic} for the case $c=-1<0$.

\begin{itemize}
  \item Let us consider the double-shock CRD solution corresponding to the data $(u_L,u_R)=(0,-0.75)$. The expected value of $\bar u$, solution to~\eqref{eq:ubarshock}, is given from~\eqref{eq:dblshock} and approximately equals~$\bar u\simeq -0.528846...$. Thanks to our numerical procedure, we obtain the approximation $\bar u_{\rm approx}\simeq -0.528877...$.
   Figure~\ref{fig:DblShock} represents both the CRD solution (left) and the selection criterion~\eqref{eq:selection} (right), see also Figure~\ref{fig:hfunction}.
  
  \begin{figure}[!ht]
    \centering
    \includegraphics[trim=30 30 30 29,clip,width=0.48\linewidth]{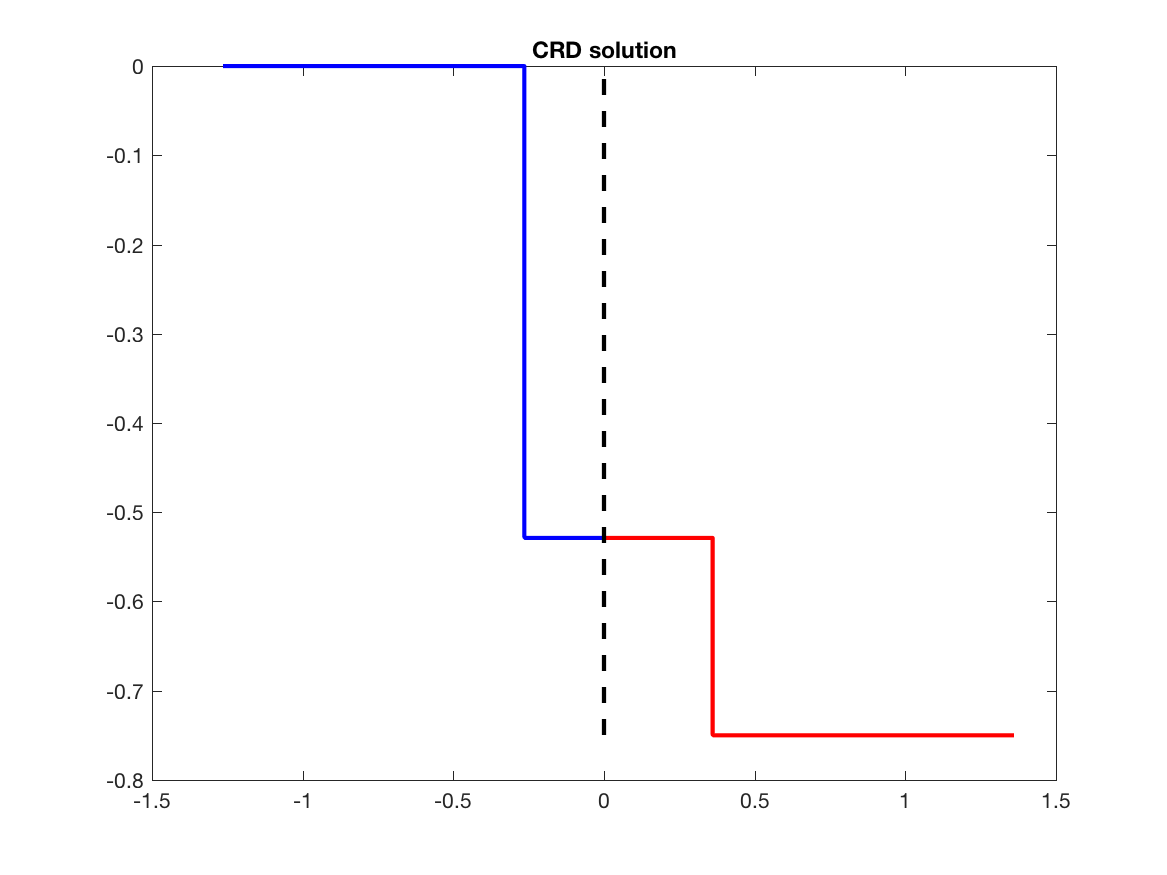}
    \includegraphics[trim=30 30 30 29,clip,width=0.48\linewidth]{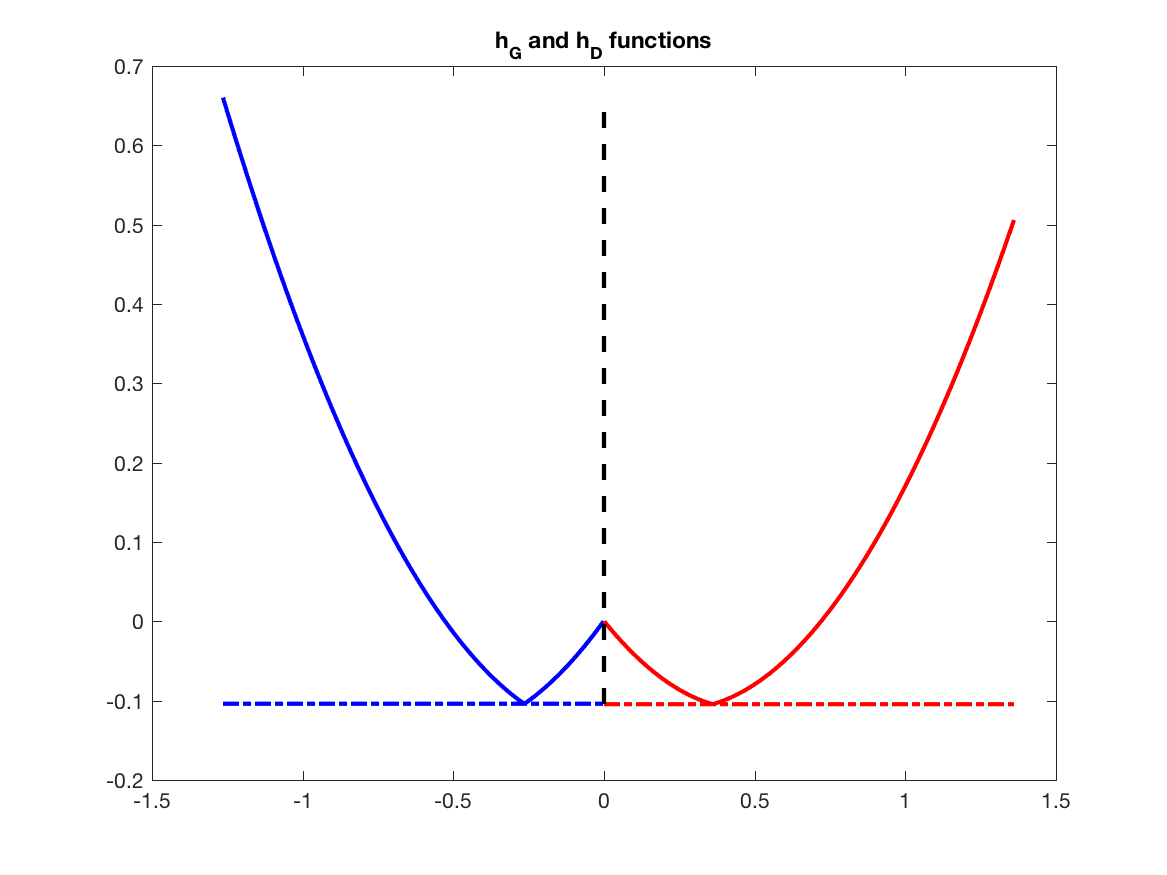}
    \caption{Numerical approximation of the double-shock solution. CRD solution $u$ (left) and selection criterion $h(\cdot;u)$ (right).}
     \label{fig:DblShock}
 \end{figure}
  
  \item Let us now consider double-rarefaction CRD solutions, with $(u_L,u_R)=(-0.7,0.1)$. The expected value of $\bar u$, solution to~\eqref{eq:ubarshock}, is given from~\eqref{eq:dblshock} and exactly equals~$\bar u = -0.5$. Thanks to our numerical procedure, we obtain the approximation $\bar u_{\rm approx}\simeq -0.49998...$. 
  Figure~\ref{fig:DblRaref} represents both the CRD solution (left) and the selection criterion~\eqref{eq:selection} (right), see also Figure~\ref{fig:hfunction}.
  
  \begin{figure}[!ht]
    \centering
    \includegraphics[trim=30 30 30 29,clip,width=0.48\linewidth]{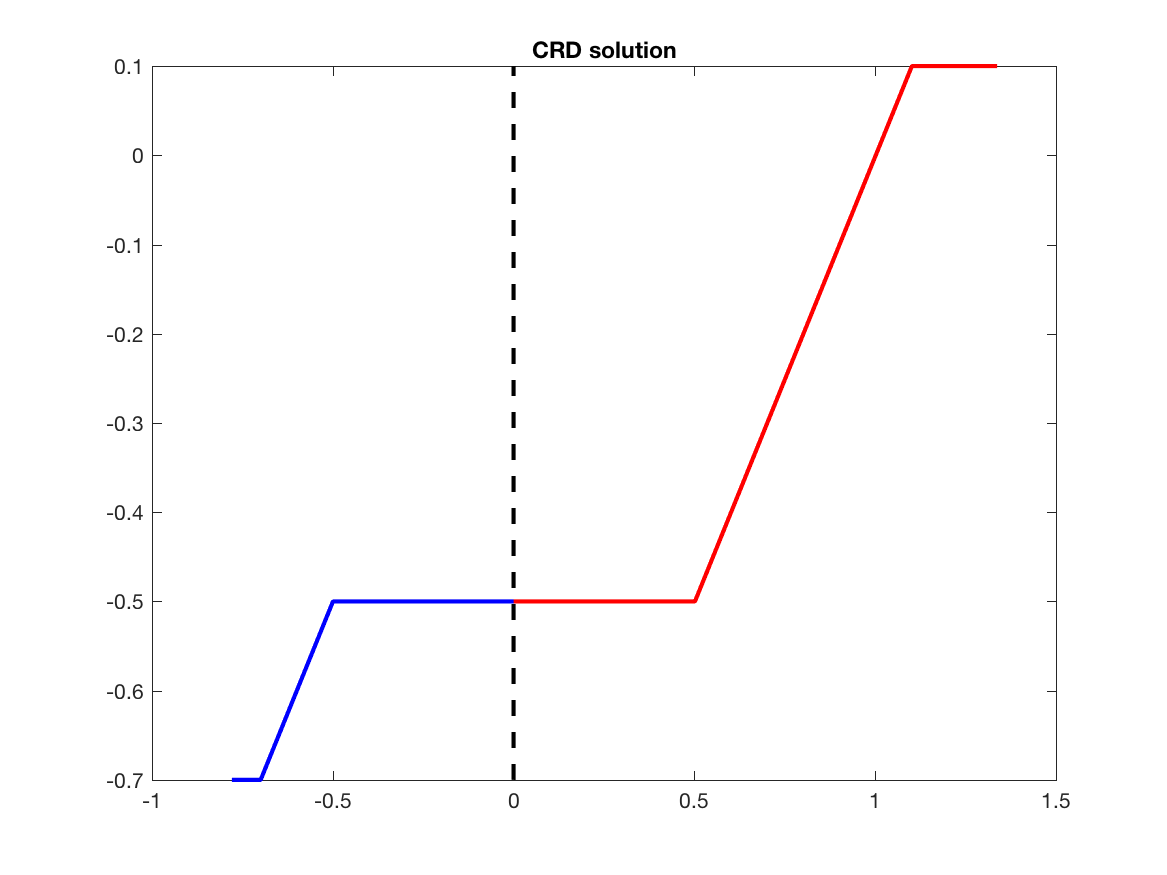}
    \includegraphics[trim=30 30 30 29,clip,width=0.48\linewidth]{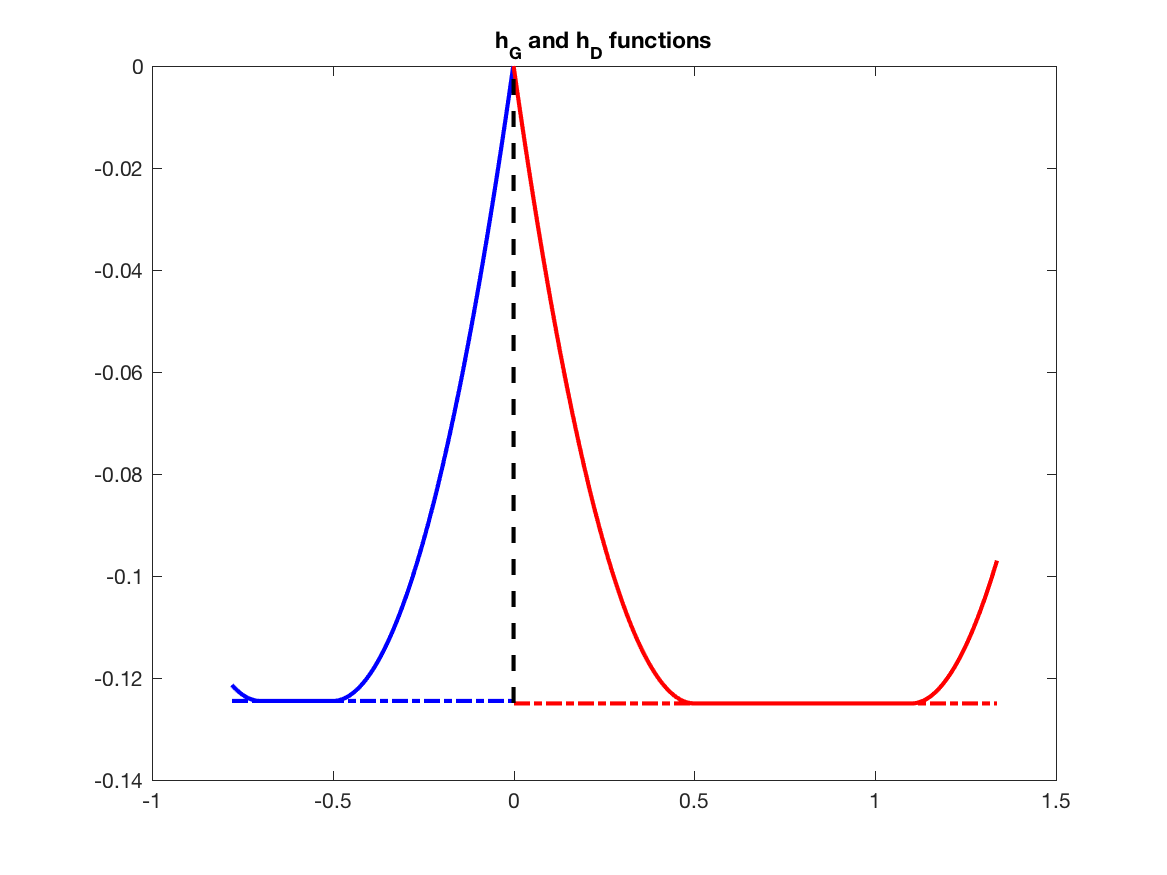}
    \caption{Numerical approximation of the double-rarefaction solution. CRD solution $u$ (left) and selection criterion $h(\cdot;u)$ (right).}
     \label{fig:DblRaref}
 \end{figure}

\end{itemize}


\subsection{Existence of multiple double waved CRD solution}

{Let us now consider} the following non-convex fluxes:
\begin{equation}
f_-(u) = \dfrac{u^4}{16} - \dfrac{u^2}{2} - u,\qquad f_+(u) = \dfrac{(u+1)^4}{16} - \dfrac{(u+1)^2}{2} + (u+1)
\end{equation}
Using the above numerical strategy, we focus now on the CRD solutions with Riemann data $(u_L,u_R) = (-2.5,1.5)$.
More precisely, we are interested in CRD solutions that consist of two separated wave fans for both the left- and the right-model and solve numerically the equation \eqref{eq:selection}. From the numerical solving of~\eqref{eq:selection}, three solutions appear. The corresponding intermediate values are approximately ${\bar u}\in\{-2.2231 , -0.5 , 1.2231\}$ respectively. 
Figure~\ref{fig:Multiple} represents any of these three solutions.

\begin{figure}[!ht]
  \centering
  \includegraphics[trim=30 30 30 29,clip,width=0.48\linewidth]{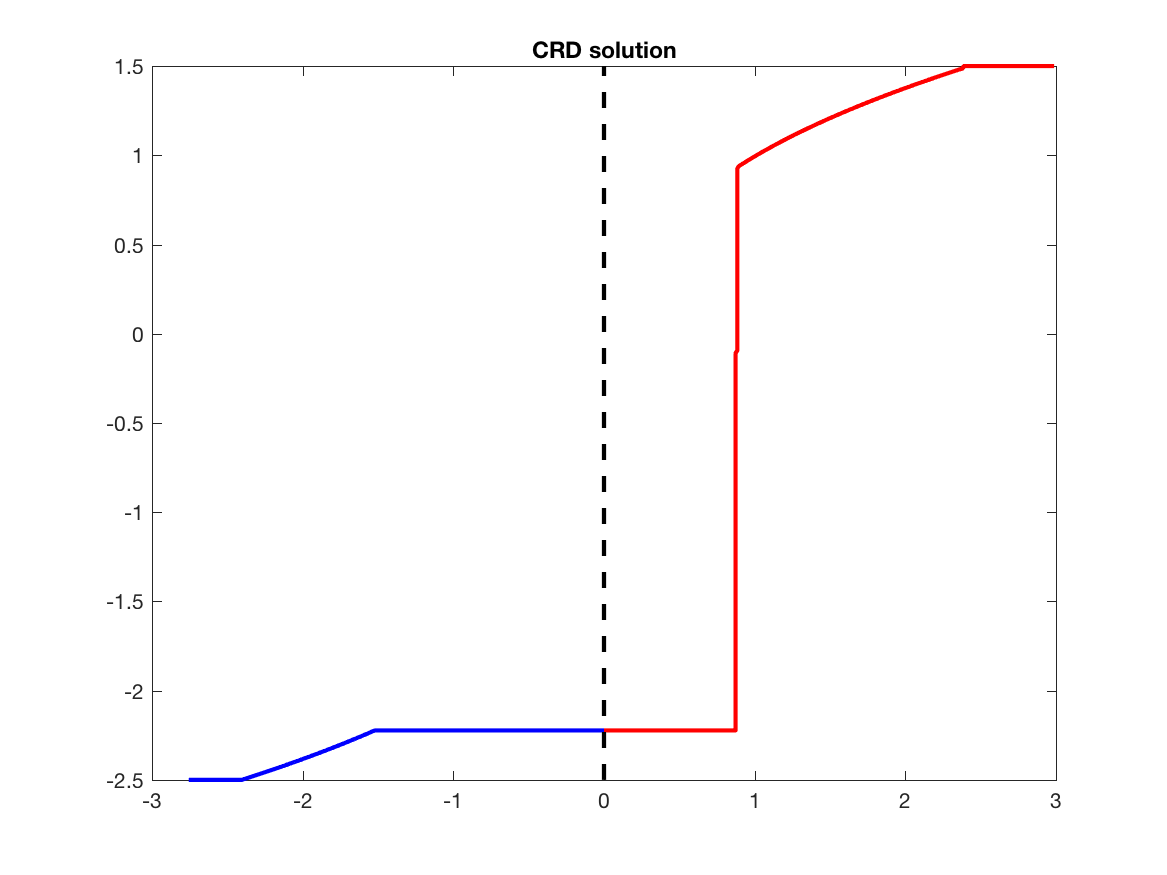}
  \includegraphics[trim=30 30 30 29,clip,width=0.48\linewidth]{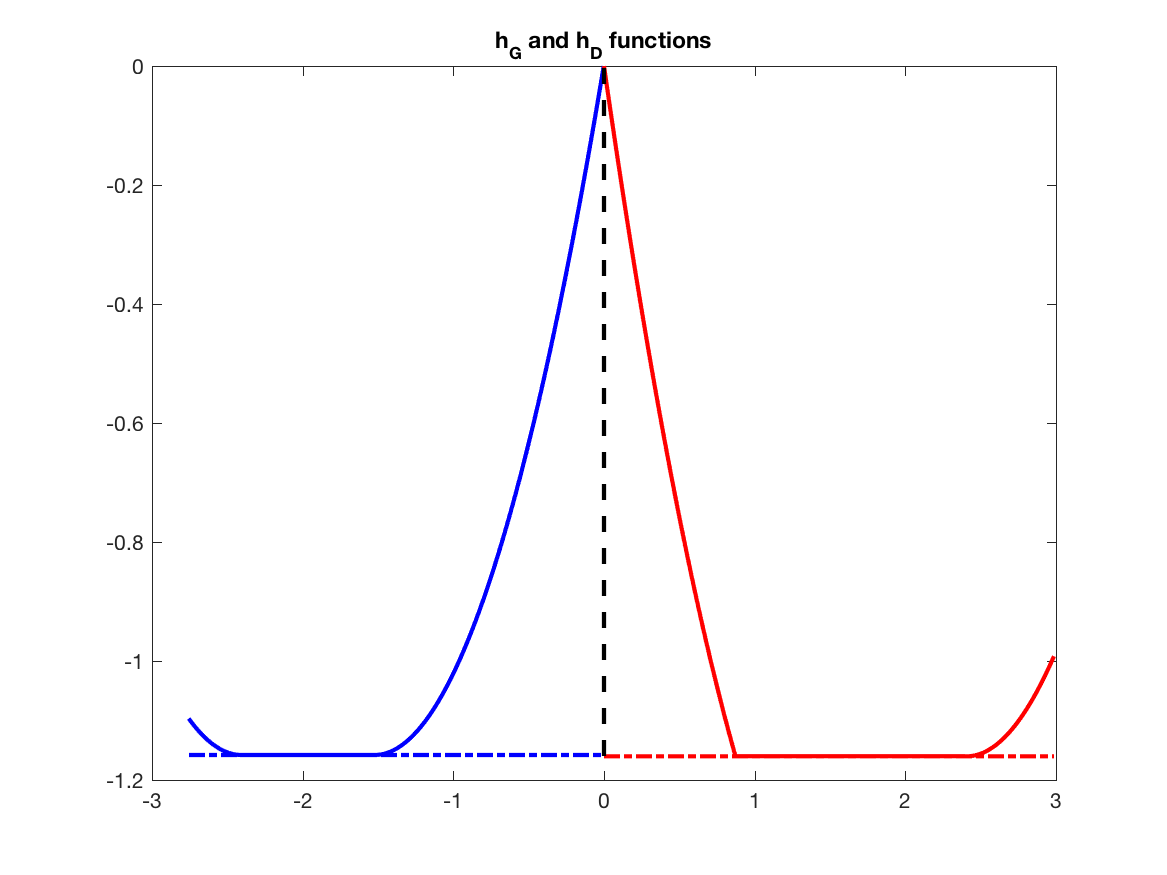}
  \includegraphics[trim=30 30 30 29,clip,width=0.48\linewidth]{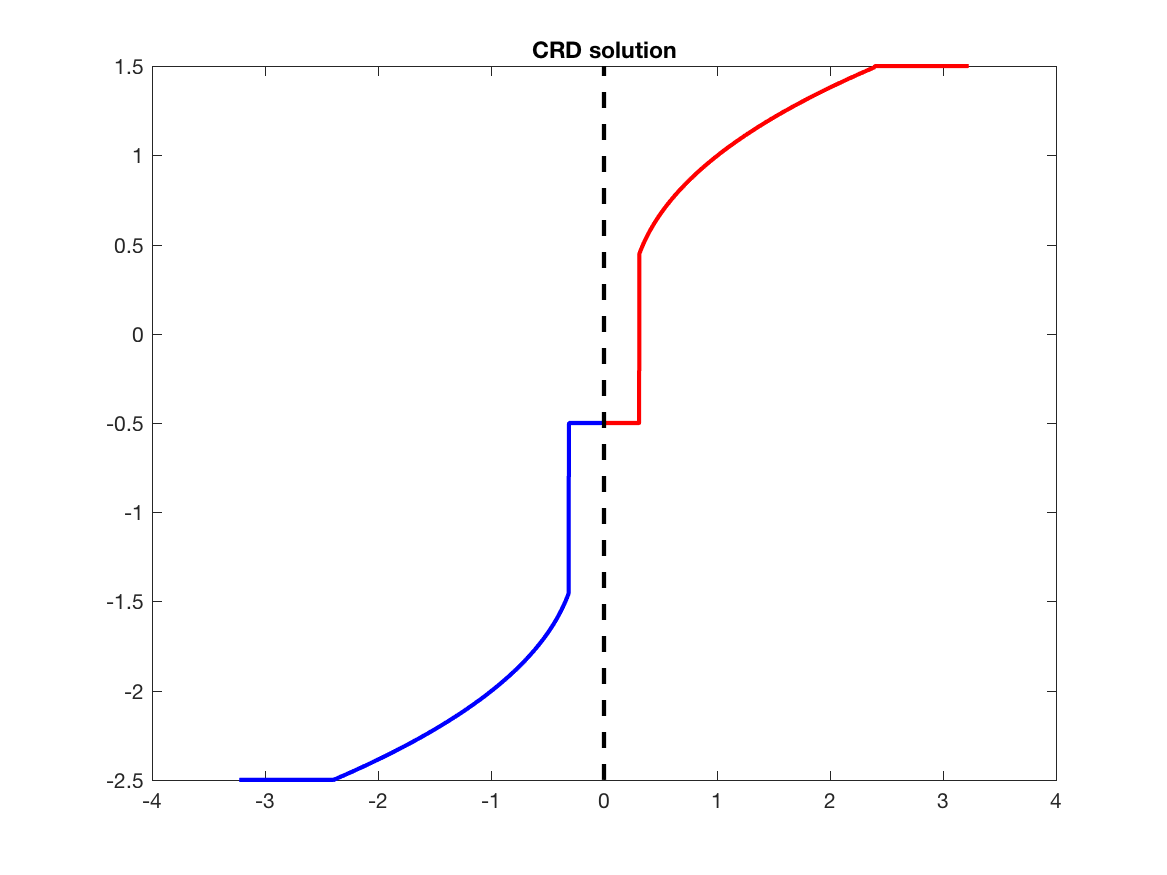}
  \includegraphics[trim=30 30 30 29,clip,width=0.48\linewidth]{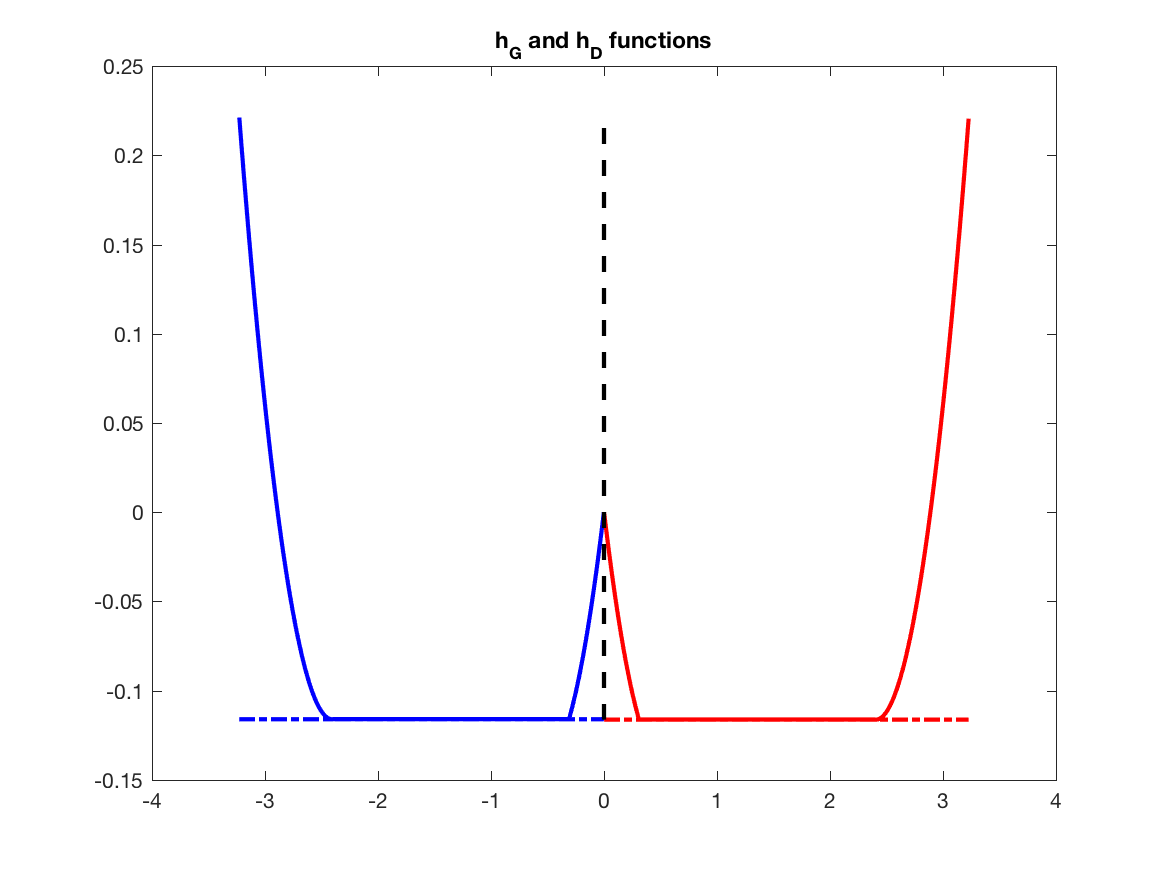}
  \includegraphics[trim=30 30 30 29,clip,width=0.48\linewidth]{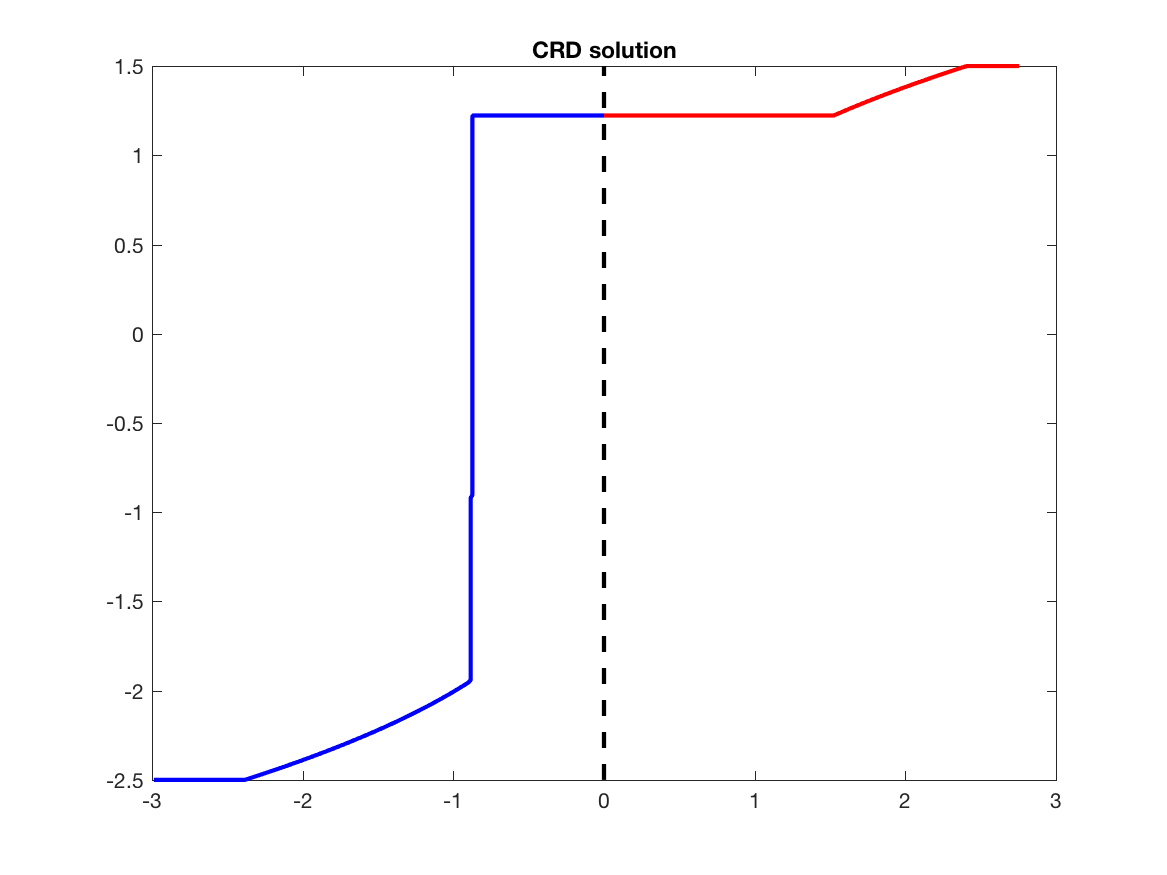}
  \includegraphics[trim=30 30 30 29,clip,width=0.48\linewidth]{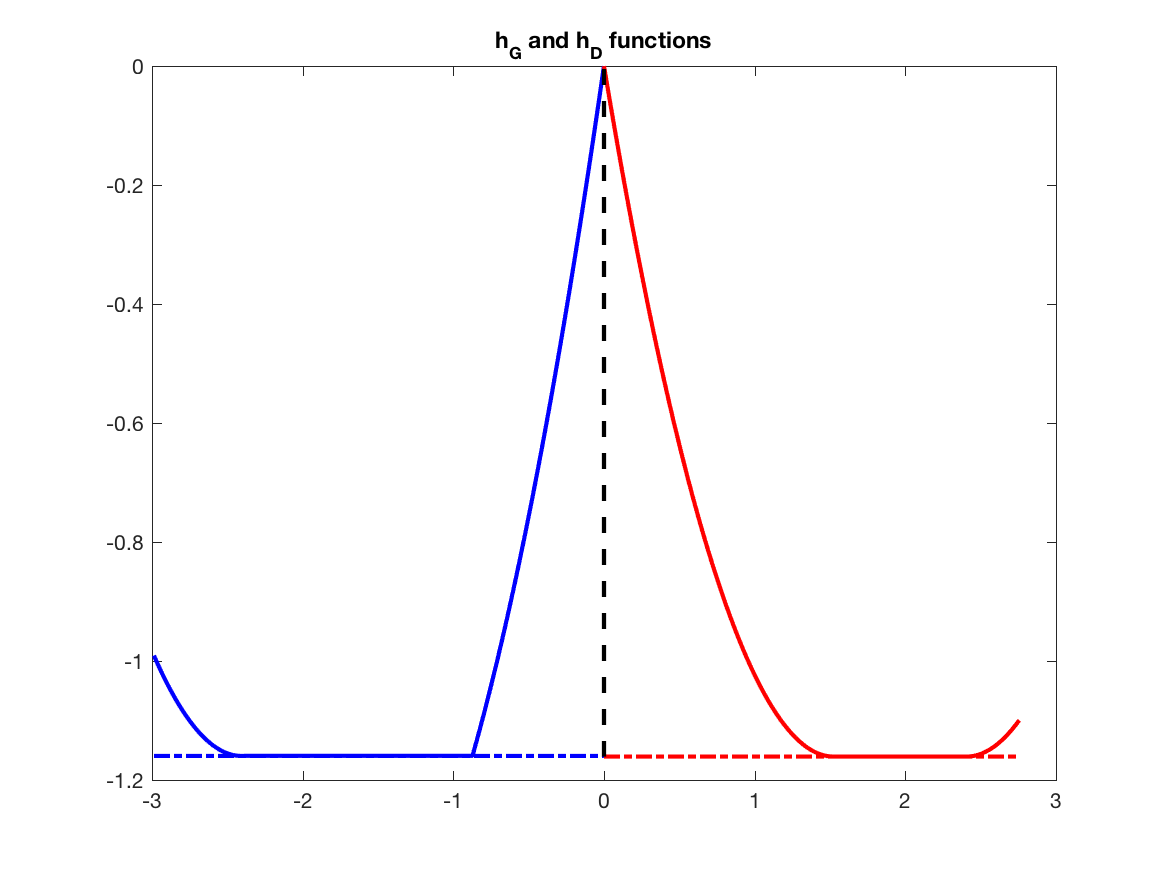}
  \caption{Three admissible solutions (from top to bottom). CRD solution $u$ (left) and selection criterion $h(\cdot;u)$ (right).}
   \label{fig:Multiple}
\end{figure}


\section{Concluding remarks}

From our results, it appears that Dafermos's regularization method applied to the nonconservative coupling problem is able to provide a partial selection of half-entropy weak solutions. A first selection process concerns the form of the internal coupling layers, which may be understood here as the ODE counterpart of the DLM path theory. When such waves are absent but left-hand and right-hand waves are present together, the solution $u$ being then continuous at the interface, the new partial global selection criterion is able to reduce the continuum of solutions to only a {\sl finite number of solutions.}
 In some more restricted situations, for instance for convex quadratic fluxes we have established the uniqueness of such solutions and made them explicit. Clearly, the nature of the regularization terms and of the coupling flux may strongly influence the value of these selected intermediate states and, therefore, the whole solution.


\section*{Acknowledgments} 

The three authors were partially supported by the Innovative Training Networks (ITN) grant 642768 (ModCompShock), and by the Centre National de la Recherche Scientifique (CNRS).


\bibliographystyle{amsplain}

\begin{thebibliography}{10}

\bibitem{Adimurthi:2005cr}
Adimurthi, S.~Mishra and G.~D.~V.~Gowda,
\newblock Optimal entropy solutions for conservation laws with discontinuous
  flux-functions.
\newblock {\em J. Hyperbolic Differ. Equ.} 2 (2005), 783--837.

\bibitem{Adimurthi:2007wd}
Adimurthi, S.~Mishra and G.~D.~V.~Gowda,
\newblock Existence and stability of entropy solutions for a conservation law
  with discontinuous non-convex fluxes.
\newblock {\em Netw. Heterog. Media} 2 (2007), 127--157.

\bibitem{Adimurthi:2007eu}
Adimurthi, S.~Mishra and G.~D.~V.~Gowda,
\newblock Conservation law with the flux function discontinuous in the space
  variable. {II}. {C}onvex-concave type fluxes and generalized entropy
  solutions.
\newblock {\em J. Comput. Appl. Math.} 203 (2007), 310--344.

\bibitem{Ambroso:2014ij}
A.~Ambroso, C.~Chalons, F.~Coquel and T.~Gali{{\'e}},
\newblock Interface model coupling {\it via} prescribed local flux balance.
\newblock {\em ESAIM Math. Model. Numer. Anal.} 48 (2014), 895--918.

\bibitem{Andreianov:2012dp}
B.~Andreianov,
\newblock Dissipative coupling of scalar conservation laws across an interface:
  theory and applications.
\newblock In {\em Hyperbolic problems---theory, numerics and applications},
  volume~1 of {\em Ser. Contemp. Appl. Math. CAM}, pp.~123--135. World Sci. Publishing, Singapore, 2012.

\bibitem{Andreianov:2014hc}
B.~Andreianov,
\newblock The semigroup approach to conservation laws with discontinuous flux.
\newblock In {\em Hyperbolic conservation laws and related analysis with
  applications}, Vol.~49 of {\em Springer Proc. Math. Stat.}, pp.~1--22.
  Springer, Heidelberg, 2014.

\bibitem{Andreianov:2015sp}
B.~Andreianov,
\newblock New approaches to describing admissibility of solutions of scalar
  conservation laws with discontinuous flux.
\newblock In {\em C{ANUM} 2014---42e {C}ongr{\`e}s {N}ational d'{A}nalyse
  {N}um{\'e}rique}, Vol.~50 of {\em ESAIM Proc. Surveys}, pages 40--65. EDP
  Sci., Les Ulis, 2015.

\bibitem{Andreianov:2015th}
B.~Andreianov and C.~Canc{{\`e}}s,
\newblock On interface transmission conditions for conservation laws with
  discontinuous flux of general shape.
\newblock {\em J. Hyperbolic Differ. Equ.} 12 (2015), 343--384.

\bibitem{Andreianov:2010ai}
B.~Andreianov, K.~H. Karlsen and N.~H. Risebro,
\newblock On vanishing viscosity approximation of conservation laws with
  discontinuous flux.
\newblock {\em Netw. Heterog. Media} 5 (2010), 617--633.

\bibitem{Andreianov:2011tg}
B.~Andreianov, K.~H. Karlsen and N.~H. Risebro,
\newblock A theory of {$L^1$}-dissipative solvers for scalar conservation laws
  with discontinuous flux.
\newblock {\em Arch. Ration. Mech. Anal.} 201 (2011), 27--86.

\bibitem{Andreianov:2015ij}
B.~Andreianov and D.~Mitrovi{{\'c}},
\newblock Entropy conditions for scalar conservation laws with discontinuous
  flux revisited.
\newblock {\em Ann. Inst. H. Poincar{\'e} Anal. Non Lin{\'e}aire}
  32 (2015), 1307--1335.

\bibitem{Andreianov:2012wo}
B.~Andreianov and N.~Seguin,
\newblock Analysis of a {B}urgers equation with singular resonant source term
  and convergence of well-balanced schemes.
\newblock {\em Discrete Contin. Dyn. Syst.} 32 (2012), 1939--1964.

\bibitem{Audusse:2005fy}
E.~Audusse and B.~Perthame,
\newblock Uniqueness for scalar conservation laws with discontinuous flux via
  adapted entropies.
\newblock {\em Proc. Roy. Soc. Edinburgh Sect. A} 135 (2005), 253--265.

\bibitem{Bardos:1979gf}
C.~Bardos, A.~Y. le~Roux and J.-C. N{{\'e}}d{{\'e}}lec,
\newblock First order quasilinear equations with boundary conditions.
\newblock {\em Comm. Partial Differential Equations} 4 (1979), 1017--1034.

\bibitem{Benyahia:2018}
{
M.~Benyahia, C.~Donadello, N.~Dymski and M.~D.~Rosini,
\newblock An existence result for a constrained two-phase transition model with metastable phase for vehicular traffic.
\newblock {\em NoDEA Nonlinear Differential Equations Appl.} 25, (2018),  48.
}

\bibitem{Berthon:2019}
C.~Berthon, M.~Bessemoulin-Chatard, A.~Crestetto and F.~Foucher,
\newblock A {R}iemann solution approximation based on the zero
  diffusion-dispersion limit of {D}afermos reformulation type problem.
\newblock {\em Calcolo} 56 (2019), 28.

\bibitem{Berthon:2012}
C.~Berthon, F.~Coquel and P.~G. LeFloch,
\newblock Why many theories of shock waves are necessary: kinetic relations for
  non-conservative systems.
\newblock {\em Proc. Roy. Soc. Edinburgh Sect. A} 142 (2012), 1--37.
  
\bibitem{Boutin:2010fv}
B.~Boutin, C.~Chalons and P.-A. Raviart,
\newblock Existence result for the coupling problem of two scalar conservation
  laws with {R}iemann initial data.
\newblock {\em Math. Models Methods Appl. Sci.} 20 (2010), 1859--1898.
  
\bibitem{Boutin:2011la}
B.~Boutin, F.~Coquel and P.~G. LeFloch,
\newblock Coupling techniques for nonlinear hyperbolic equations. {I}:
  {S}elf-similar diffusion for thin interfaces.
\newblock {\em Proc. Roy. Soc. Edinburgh Sect. A} 141 (2011), 921--956.
  
\bibitem{BCL-3}
B. Boutin, F. Coquel and P.~G. LeFloch, 
Coupling techniques for nonlinear hyperbolic equations. III.
The well--balanced approximation of thick interfaces, 
\newblock {\em SIAM J. Numer. Anal.} 51 (2013), 1108--1133.

\bibitem{BCL-4}
B. Boutin, F. Coquel and P.~G. LeFloch,  
Coupling techniques for nonlinear hyperbolic equations. IV. 
Well-balanced schemes for scalar multidimensional and multi-component laws, 
\newblock {\em Math. Comp.} 84 (2015), 1663--1702. 

\bibitem{Burger:2008oq}
R.~B{{\"u}}rger and K.~H. Karlsen,
\newblock Conservation laws with discontinuous flux: a short introduction.
\newblock {\em J. Engrg. Math.} 60 (2008), 241--247.

\bibitem{Castro:2008}
M.~J. Castro, P.~G. LeFloch, M.~L. Mu\~{n}oz Ruiz and C.~Par\'{e}s,
\newblock Why many theories of shock waves are necessary: convergence error in
  formally path-consistent schemes.
\newblock {\em J. Comput. Phys.} 227 (2008), 8107--8129.

\bibitem{Chalons:2010fu}
C.~{Chalons},
\newblock {Theoretical and numerical aspects of the interfacial coupling: the
  scalar Riemann problem and an application to multiphase flows.}
\newblock {\em {Netw. Heterog. Media}} 5 (2010), 507--524.

\bibitem{Chalons:2008mi}
C.~Chalons, P.-A. Raviart, and N.~Seguin,
\newblock The interface coupling of the gas dynamics equations.
\newblock {\em Quart. Appl. Math.} 66 (2008), 659--705.

\bibitem{Christoforou:2012kn}
C.~Christoforou and L.~V. Spinolo,
\newblock On the physical and the self-similar viscous approximation of a
  boundary {R}iemann problem.
\newblock {\em Riv. Math. Univ. Parma (N.S.)} 3 (2012), 41--54.

\bibitem{Coquel:2016}
{
F.~Coquel, E.~Godlewski, K.~Haddaoui, C.~Marmignon and F.~Renac,
\newblock Choice of measure source terms in interface coupling for a model problem in gas dynamics
\newblock {\em Math. Comp.} 85 (2016), 2305--2339.
}

\bibitem{Corli:2018}
{
A.~Corli, M.~Figiel, A.~Futa, and M.~D.~Rosini,
\newblock Coupling conditions for isothermal gas flow and applications to valves.
\newblock {\em Nonlinear Anal. Real World Appl.} 40 (2018), 403--427.
}

\bibitem{Dafermos:72}
C.~M. Dafermos,
\newblock Polygonal approximations of solutions of the initial value problem
  for a conservation law.
\newblock {\em J. Math. Anal. Appl.} 38 (1972), 33--41.

\bibitem{Dafermos:1973fv}
C.~M. Dafermos,
\newblock Solution of the {R}iemann problem for a class of hyperbolic systems
  of conservation laws by the viscosity method.
\newblock {\em Arch. Rational Mech. Anal.} 52 (1973), 1--9.

\bibitem{Dafermos:2010dk}
C.~M. Dafermos,
\newblock {\em Hyperbolic conservation laws in continuum physics}, volume~325
  of {\em Grundlehren der Mathematischen Wissenschaften [Fundamental Principles
  of Mathematical Sciences]}.
\newblock Springer-Verlag, Berlin, third edition, 2010.

\bibitem{Dal-Maso:1995cs}
G.~Dal~Maso, P.~G. Le{F}loch, and F.~Murat,
\newblock Definition and weak stability of nonconservative products.
\newblock {\em J. Math. Pures Appl.} 9 (1995), 483--548.

\bibitem{Diehl:1996ly}
S.~Diehl,
\newblock Scalar conservation laws with discontinuous flux function. {I}. {T}he
  viscous profile condition.
\newblock {\em Comm. Math. Phys.} 176 (1996), 23--44.

\bibitem{Diehl:1996gf}
S.~Diehl and N.-O. Wallin,
\newblock Scalar conservation laws with discontinuous flux function. {II}. {O}n
  the stability of the viscous profiles.
\newblock {\em Comm. Math. Phys.} 176 (1996), 45--71.

\bibitem{Dubois:1988ly}
F.~Dubois and P.~LeFloch,
\newblock Boundary conditions for nonlinear hyperbolic systems of conservation
  laws.
\newblock {\em J. Differential Equations} 71 (1988), 93--122.

\bibitem{galie:tel}
T.~Gali{\'e},
\newblock {\em {Interface Model Coupling in Fluid Dynamics. Application to
  Two-Phase Flows.}}
\newblock Theses, {Universit{\'e} Pierre et Marie Curie - Paris VI}, Mar. 2009.

\bibitem{Garavello:2011}
{
M.~Garavello, and P.~Goatin,
\newblock The Aw-Rascle traffic model with locally constrained flow.
\newblock {\em  J. Math. Anal. Appl.} 378 (2011), 634--648.
}

\bibitem{Garavello:2006}
{
M.~Garavello, and B.~Piccoli,
\newblock Traffic flow on a road network using the Aw-Rascle model.
\newblock {\em Comm. Partial Differential Equations} 31 (2006), 243--275.
}

\bibitem{Goatin:2004la}
P.~Goatin and P.~G. LeFloch,
\newblock The {R}iemann problem for a class of resonant hyperbolic systems of
  balance laws.
\newblock {\em Ann. Inst. H. Poincar{\'e} Anal. Non Lin{\'e}aire}  21 (2004), 881--902.

\bibitem{Godlewski:2005jk}
E.~Godlewski, K.-C. Le~Thanh, and P.-A. Raviart,
\newblock The numerical interface coupling of nonlinear hyperbolic systems of
  conservation laws. {II}. {T}he case of systems.
\newblock {\em M2AN Math. Model. Numer. Anal.} 39 (2005), 649--692.

\bibitem{Godlewski:1991cr}
E.~Godlewski and P.-A. Raviart,
\newblock {\em Hyperbolic systems of conservation laws}, Vol.~3/4 of {\em
  Math{\'e}matiques \& Applications}.
\newblock Ellipses, Paris, 1991.

\bibitem{Godlewski:2004qf}
E.~Godlewski and P.-A. Raviart,
\newblock The numerical interface coupling of nonlinear hyperbolic systems of
  conservation laws. {I}. {T}he scalar case.
\newblock {\em Numer. Math.} 97 (2004), 81--130.
  
\bibitem{Herty:2007}
{
M.~Herty,
\newblock Modeling, simulation and optimization of gas networks with compressors.
\newblock {\em Netw. Heterog. Media} 2 (2007), 81--97. 
}

\bibitem{HouPLF}
T.Y. Hou and P.~G. LeFloch, 
Why nonconservative schemes converge to wrong solutions. Error analysis, 
{\em Math. of Comput.} 62 (1994), 497--530.

\bibitem{Isaacson:1992ff}
E.~Isaacson and B.~Temple,
\newblock Nonlinear resonance in systems of conservation laws.
\newblock {\em SIAM J. Appl. Math.} 52 (1992), 1260--1278.

\bibitem{Joseph:1999dq}
K.~T. Joseph and P.~G. LeFloch,
Boundary layers in weak solutions to hyperbolic conservation laws, 
{\em Arch. Rational Mech Anal.} 147 (1999), 47--88. 

\bibitem{Joseph:2002dq}
K.~T. Joseph and P.~G. LeFloch,
\newblock Boundary layers in weak solutions of hyperbolic conservation laws.
  {II}. {S}elf-similar vanishing diffusion limits.
\newblock {\em Commun. Pure Appl. Anal.} 1 (2002), 51--76.

\bibitem{Joseph:2006dq}
K.~T. Joseph and P.~G. LeFloch,
{\it Singular limits for the Riemann problem. General diffusion, relaxation, and boundary conditions,} 
in ``New analytical approach to multidimensional balance laws", 
O. Rozanova ed., 
{\em Nova Press,} 2006, pp.~143--172. 

\bibitem{Kalasnikov:1959ud}
A.~S. Kala{\v{s}}nikov,
\newblock Construction of generalized solutions of quasi-linear equations of
  first order without convexity conditions as limits of solutions of parabolic
  equations with a small parameter.
\newblock {\em Dokl. Akad. Nauk SSSR} 127 (1959), 27--30.

\bibitem{Klingenberg:1995xy}
C.~Klingenberg and N.~H. Risebro,
\newblock Convex conservation laws with discontinuous coefficients.
  {E}xistence, uniqueness and asymptotic behavior.
\newblock {\em Comm. Partial Differential Equations} 20 (1995), 1959--1990.

\bibitem{Kruzkov:1970kq}
S.~N.~Kru{\v{z}}kov,
\newblock First order quasilinear equations with several independent variables.
\newblock {\em Mat. Sb. (N.S.)} 81 (1970), 228--255.
  
\bibitem{LeFloch-IMA}  
  P.~G.~LeFloch, 
Shock waves for nonlinear hyperbolic systems in nonconservative form, 
Institute for Math. and its Appl., Minneapolis, IMA, Preprint \# 593, 1989.
Available at: 
\\
https://conservancy.umn.edu/bitstream/handle/11299/5107/593.pdf 

\bibitem{LeFloch-graph} 
 P.~G.~LeFloch, 
Graph solutions of nonlinear hyperbolic systems, 
{\em J. Hyperbolic Differ. Equ.} 1 (2004), 643--689. 

\bibitem{Rosini:2020}
{
M.~D.~Rosini,
\newblock Systems of conservation laws with discontinuous fluxes and applications to traffic.
\newblock {\em Ann. Univ. Mariae Curie-Sklodowska, Sect. A} 73 (2019), 135--173.
}

\bibitem{Seguin:2003bh}
N.~Seguin and J.~Vovelle,
\newblock Analysis and approximation of a scalar conservation law with a flux
  function with discontinuous coefficients.
\newblock {\em Math. Models Methods Appl. Sci.} 13 (2003), 221--257.

\bibitem{Shivamoggi:2014}
{
B.~K. Shivamoggi,
\newblock {\em Nonlinear {Dynamics} and {Chaotic} {Phenomena}: {An} {Introduction}}, Vol.~103 of {\em
Fluid {Mechanics} and {Its} {Applications}}.
\newblock Springer, Dordrecht, 2014.
}

\bibitem{Tupciev:1966kk}
V.~A. Tup{\v{c}}iev,
\newblock The problem of decomposition of an arbitrary discontinuity for a
  system of quasi-linear equations without the convexity condition.
\newblock {\em \u Z. Vy\v cisl. Mat. i Mat. Fiz.} 6 (1966), 527--547.

\bibitem{Tzavaras:1996kl}
A.~E. Tzavaras,
\newblock Wave interactions and variation estimates for self-similar
  zero-viscosity limits in systems of conservation laws,
\newblock {\em Arch. Rational Mech. Anal.} 135 (1996), 1--60.

\end{thebibliography}

\end{document}